\theoremstyle{plain}
\newtheorem{theorem}{Theorem}
\newtheorem{lemma}[theorem]{Lemma}
\newtheorem{proposition}[theorem]{Proposition}
\newtheorem{definition}[theorem]{Definition}
\theoremstyle{definition}
\theoremstyle{remark}
\newtheorem{remark}{Remark}
\newif\if@borderstar
\def\bordermatrix{\@ifnextchar*{%
\@borderstartrue\@bordermatrix@i}{\@borderstarfalse\@bordermatrix@i*}%
}
\def\@bordermatrix@i*{\@ifnextchar[{\@bordermatrix@ii}{\@bordermatrix@ii[() ]}}
\def\@bordermatrix@ii[#1]#2{%
\begingroup
\m@th\@tempdima8.75\p@\setbox\z@\vbox{%
\def\cr{\crcr\noalign{\kern 2\p@\global\let\cr\endline }}%
\ialign {$##$\hfil\kern 2\p@\kern\@tempdima & \thinspace %
\hfil $##$\hfil && \quad\hfil $##$\hfil\crcr\omit\strut %
\hfil\crcr\noalign{\kern -\baselineskip}#2\crcr\omit %
\strut\cr}}%
\setbox\tw@\vbox{\unvcopy\z@\global\setbox\@ne\lastbox}%
\setbox\tw@\hbox{\unhbox\@ne\unskip\global\setbox\@ne\lastbox}%
\setbox\tw@\hbox{%
$\kern\wd\@ne\kern -\@tempdima\left\@firstoftwo#1%
\if@borderstar\kern2pt\else\kern -\wd\@ne\fi%
\global\setbox\@ne\vbox{\box\@ne\if@borderstar\else\kern 2\p@\fi}%
\vcenter{\if@borderstar\else\kern -\ht\@ne\fi%
\unvbox\z@\kern-\if@borderstar2\fi\baselineskip}%
\if@borderstar\kern-2\@tempdima\kern2\p@\else\,\fi\right\@secondoftwo#1 $%
}\null \;\vbox{\kern\ht\@ne\box\tw@}%
\endgroup}
\DeclareMathOperator{\Cov}{Cov}
\DeclareMathOperator{\Expectation}{\mathbb E} 
\DeclareMathOperator{\Hessian}{Hess}
\DeclareMathOperator{\Span}{Span}
\DeclareMathOperator{\arsinh}{arsinh}
\DeclareMathOperator{\entropyprod}{\mathscr{D}}
\newcommand{\eTof}[1]{T\maxexp{#1}}
\newcommand{\mTof}[1]{\prescript{*}{}T\maxexp{#1}}
\newcommand{\entropyprodat}[1]{\entropyprod\left(#1\right)}
\newcommand{\Bspace}[1]{B_{#1}}
\newcommand{\KH}[2]{\operatorname{DH}\left(#1 \middle| #2 \right)}
\newcommand{\KL}[2]{\mathbf{D}\left(#1\,\Vert#2\right)}
\newcommand{\Lexp}[1]{L^{\cosh-1}\left(#1\right)}
\newcommand{\LlogL}[1]{L^{(\cosh-1)_*}\left(#1\right)}
\newcommand{\Maxexp}{\mathcal E}
\newcommand{\PSexp}[2]{\euler^{#2 - K_{#1}(#2)}}
\newcommand{\Sspace}[1]{\mathcal S_{#1}}
\newcommand{\absoluteval}[1]{\left|#1\right|}
\newcommand{\bDelta}{{\bm\Delta}}
\newcommand{\bdelta}{{\bm\delta}}
\newcommand{\bnabla}{{\bm\nabla}}
\newcommand{\collinv}[2]{\absoluteval{#1}^2+\absoluteval{#2}^2}
\newcommand{\condexpat}[3]{\Expectation_{#1}\left(#2 \middle\vert #3 \right)}
\newcommand{\condexp}[2]{\Expectation\left(#1 \middle\vert #2 \right)}
\newcommand{\cosof}[1]{\cos\left(#1\right)}
\newcommand{\covat}[3]{\Cov_{#1}\left(#2,#3\right)}
\newcommand{\densities}{\mathcal P_{\ge}}
\newcommand{\derivby}[1]{\frac{d}{d#1}}
\newcommand{\eBspace}[1]{B_{#1}}
\newcommand{\etransport}[2]{\prescript{\text{e}}{} {\mathbb U} _ {#1} ^ {#2}}
\newcommand{\euler}{\mathrm{e}}
\newcommand{\expectat}[2]{{\Expectation}_{#1}\left[#2\right]}
\newcommand{\expectof}[1]{\Expectation\left(#1\right)}
\newcommand{\expof}[1]{\exp\left(#1\right)}
\newcommand{\hessianof}[1]{\Hessian #1}
\newcommand{\integrald}[3]{\int_{#1} {#2} \ {#3}}
\newcommand{\logof}[1]{\log\left(#1\right)}
\newcommand{\mBspace}[1]{\prescript{*}{}{B}_{#1}}
\newcommand{\maxexpM}{{\mathcal E}}
\newcommand{\maxexp}[1]{{\mathcal E}\left(#1\right)}
\newcommand{\maxexpat}[2]{{\mathcal E}_{#1}\left(#2\right)}
\newcommand{\mtransport}[2]{\prescript{\text{m}}{} {\mathbb U} _ {#1} ^ {#2}}
\newcommand{\normat}[2]{\left\Vert#2\right\Vert_{#1}}
\newcommand{\pdensities}{\mathcal P_>}
\newcommand{\preBspace}[1]{\prescript{*}{}B_{#1}}
\newcommand{\reals}{\mathbb R}
\newcommand{\scalarat}[3]{\left\langle#2,#3\right\rangle_{#1}}
\newcommand{\scalarof}[2]{\left\langle#1,#2\right\rangle}
\newcommand{\sdensities}{\mathcal P_{1}}
\newcommand{\sdomain}[1]{\mathcal S_{#1}}
\newcommand{\setof}[2]{\left\{#1 \middle| #2 \right\}}
\newcommand{\set}[1]{\left\{#1\right\}}
\newcommand{\sinof}[1]{\sin\left(#1\right)}
\newcommand{\spanof}[1]{\Span\left(#1\right)}
\newcommand{\transport}[2]{\mathbb U_{#1}^{#2}}
\newcommand{\versof}[1]{\widehat{#1}}
\newcommand{\widebar}[1]{\overline #1}
\renewcommand{\ge}{\geqslant}
\renewcommand{\le}{\leqslant}
\renewcommand{\geq}{\geqslant}
\renewcommand{\leq}{\leqslant}
\newcommand{\Sd}{\mathbb{S}^{2}}
\begin{document}

\title[IG and Boltzmann]{Information Geometry Formalism for the Spatially Homogeneous Boltzmann Equation}

\author[B. Lods]{Bertrand Lods}

\address{Universit\`a di Torino \& Collegio Carlo Alberto,  Department of Economics and Statistics, Corso Unione Sovietica, 218/bis, 10134 Torino, Italy}

\email{bertrand.lods@unito.it}

\author[G. Pistone]{Giovanni Pistone}

\address{de Castro Statistics, Collegio Carlo Alberto, Via Real Collegio 30, 10024 Moncalieri, Italy}

\email{giovanni.pistone@carloalberto.org}

\thanks{Both B. Lods and G. Pistone acknowledge support from the \emph{de Castro Statistics Initiative}, Collegio Carlo Alberto, Moncalieri, Italy. G. Pistone is a member of INdAM/GNAMPA.} 


\keywords{Information Geometry, Orlicz Space, Spatially Homogeneous Boltzmann Equation, Kullback-Leibler divergence, Hyv\"arinen divergence}

\date{\today}

\begin{abstract}
Information Geometry generalizes to infinite dimension by modeling the tangent space of the relevant manifold of probability densities with exponential Orlicz spaces. We review here several properties of the  exponential manifold on a suitable set $\mathcal E$ of mutually absolutely continuous densities. We study in particular the fine properties of the Kullback-Liebler divergence in this context. We also show that this setting is well-suited for the study of the spatially homogeneous Boltzmann equation  if $\mathcal E$ is a set of positive densities with finite relative entropy with respect to the Maxwell density. More precisely, we analyse the Boltzmann operator in the geometric setting from the point of its Maxwell's weak form as a composition of elementary operations in the exponential manifold, namely tensor product, conditioning, marginalization and we prove in a geometric way the basic facts i.e., the H-theorem. We also illustrate the robustness of our method by discussing, besides the Kullback-Leibler divergence, also the property of Hyv\"arinen divergence. This requires to generalise our approach to Orlicz-Sobolev spaces to include derivatives.%
\end{abstract}

\maketitle

\tableofcontents

\section{Introduction}

Information geometry (IG) has been essentially developed by S.-I. Amari, see the monograph by Amari and Nagaoka \cite{MR1800071}. In his work, all previous geometric---essentially metric---descriptions of probabilistic and statistics concepts are extended in the direction of affine differential geometry, including the fundamental treatment of connections. A corresponding concept for abstract manifold, called statistical manifold, has been worked out by S. L. Lauritzen in \cite{MR932246}. Amari's framework is today considered a case of Hessian geometry as it is described in the monograph by H. Shima \cite{MR2293045}.

Other versions of IG have been studied to deal with a non-parametric settings such as the Boltzmann equation as it is described in \cite{MR1313028} and \cite{MR1942465}. A very general set-up for information geometry is the following. Consider a one-dimensional family of positive densities with respect to a measure $\mu$, $\theta \mapsto p_\theta$, and a random variable $U$. A classical statistical computation, possibly due to Ronald Fisher, is

\begin{multline*}
  \derivby \theta \int U(x) p(x;\theta)\ \mu(dx) =   \int U(x) \left(\derivby \theta p(x;\theta)\right)\ \mu(dx) = \\  \int U(x) \left(\derivby \theta \log p(x;\theta)\right)\ p(x;\theta)\ \mu(dx) = \expectat \theta {\left(U - \expectat \theta U\right) \derivby \theta \log p_\theta } \ .
\end{multline*}
The previous computation suggests the following geometric construction which is rigorous if the sample space is finite and can be forced to work in general under suitable assumptions. We use the differential geometry language e.g., \cite{MR1138207}. If $\Delta$ is the \emph{probability simplex} on a given sample space $(\Omega,\mathcal F)$, we define the \emph{statistical bundle} of $\Delta$ to be
 
\begin{equation*} 
 T\Delta = \setof{(\pi,u)}{\pi \in \Delta, u \in \mathcal L^2(\pi), \expectat \pi u = 0}.
\end{equation*}
Given a one dimensional curve in $\Delta$, $\theta \mapsto \pi_\theta$ we can define its \emph{velocity} to be the curve

\begin{equation*}
  \theta \mapsto \left(\pi_\theta,\frac D {d\theta} \pi_\theta\right) \in T\Delta \ ,
\end{equation*}
where we define

\begin{equation*}
  \frac D {d\theta} \pi_\theta = \frac d {d\theta} \logof{\pi_\theta} = \dfrac {\frac d {d\theta} \pi_\theta} {\pi_\theta} \ .
\end{equation*}
Each fiber $T_\pi\Delta = \mathcal L_0^2(\mu)$ has a scalar product and we have a parallel transport

\begin{equation*}
  \transport \pi \nu \colon T_\pi\Delta \ni V \mapsto V - \expectat \nu V \in T_\nu\Delta \ .
\end{equation*}

This structure provides an interesting framework to interpret the Fisher computation cited above. The basic case of a finite state space has been extended by Amari and coworkers to the case of a parametric set of strictly positive probability densities on a generic sample space. Following a suggestion by A. P.  Dawid in \cite{MR0428531,MR0471125}, a particular nonparametric version of that theory was developed in a series of papers \cite{MR1370295,MR1628177,MR1704564,MR1805840,cena:2002,MR2396032,imparato:thesis,pistone:2009EPJB,pistone:2010SL,MR3126029,MR3130268}, where the set $\pdensities$ of all strictly positive probability densities of a measure space is shown to be a Banach manifold (as it is defined in \cite{bourbaki:71variete,MR960687,MR1335233}) modeled on an Orlicz Banach space, see e.g., \cite[Chapter II]{MR724434}. 

In the present paper, Sec. \ref{sec:model-spaces} recalls the theory and our notation about the model Orlicz spaces. This material is included for convenience only and this part should be skipped by any reader aware of any of the papers \cite{MR1370295,MR1628177,MR1704564,MR1805840,cena:2002,MR2396032,imparato:thesis,pistone:2009EPJB,pistone:2010SL,MR3126029,MR3130268} quoted above. The following Sec. \ref{exponentialmanifold} is mostly based on the same references and it is intended to introduce that manifold structure and to give a first example of application to the study of Kullback-Liebler divergence. The special features of statistical manifolds that contain the Maxwell density are discussed in Sec. \ref{chap:gaussian-space}. In this case we can define the Boltzmann-Gibbs entropy and study its gradient flow. The setting for the Boltzmann equation is discussed in Sec. \ref{sec:boltzmann-1} where we show that the equation can be derived from probabilistic operations performed on the statistical manifold. Our application to the study of the Kullback-Liebler divergence is generalised in Sec. \ref{sec:orlicz-sobolev} to the more delicate case of the Hyvar\"inen divergence. This requires in particular a generalisation of the manifold structure to include differential operators and leads naturally to the introduction of Orlicz-Sobolev spaces.

We are aware that there are other approaches to non-parametric information geometry that are not based on the notion of the exponential family and that we do not consider here. We mention in particular \cite{MR2948226} and \cite{ay|jost|le|schwachhofer:2014}.

\section{Model Spaces}
\label{sec:model-spaces}

 Given a $\sigma$-finite measure space, $(\Omega, \mathcal F, \mu)$, we denote by $\pdensities$ the set of all densities that are positive $\mu$-a.s, by $\densities$ the set of all densities, by $\sdensities$ the set of measurable functions $f$ with $\int f\ d\mu = 1$. 
 
We introduce here the Orlicz spaces we shall mainly investigate in the sequel. We refer to \cite{MR2396032} and \cite[Chapter II]{MR724434} for more details on the matter. We consider the  Young function 

\begin{equation*}
\Phi \colon \reals \ni x \mapsto \Phi(x)=\cosh x - 1
\end{equation*}
and, for any $p \in \pdensities$, the Orlicz space $L^{\Phi}(p)=\Lexp p$ is defined as follows: a real random variable $U$ belongs to $L^{\Phi}(p)$ if 

\begin{equation*}
\expectat p {\Phi(\alpha U)} < +\infty \qquad \text{ for some } \quad \alpha > 0 \ .
\end{equation*}
The Orlicz space $L^{\Phi}(p)$ is a Banach space when endowed with the Luxemburg norm defined as

\begin{equation*}
\Vert U \Vert_{\Phi, p}=\inf \setof{\lambda > 0}{\,\expectat p{\Phi(U/\lambda)} \le 1} \ .
\end{equation*}
The conjugate function of $\Phi = \cosh - 1$ is 

\begin{equation*} 
  \Phi_{*}(y)=(\cosh - 1)_*(y) = \int_0^y \arsinh u\ du = y \arsinh y + 1 - \sqrt{1+y^2}, \qquad y \in \reals
\end{equation*}
which satisfies the so-called $\Delta_2$-condition as 

\begin{equation*}
\Phi_{*}(\alpha\,y) =\int_{0}^{\absoluteval{\alpha y}} \left(\absoluteval{\alpha y}-u\right)\Phi_{*}''(u)du= \int_0^{\absoluteval{\alpha y}} \frac{\absoluteval \alpha \absoluteval y - u}{1+u^2}\ du \le \max(1,\alpha^2) \Phi_{*}(y) \ .
\end{equation*}

Since $\Phi_{*}$ is a Young function, for any $p \in \pdensities$, one can define as above the associated Orlicz space $L^{\Phi_{*}}(p)=\LlogL p$ and its corresponding Luxemburg norm $\Vert \cdot \Vert_{\Phi_{*}, p}.$ Because the functions $\Phi$ and $\Phi_*$ form a Young pair, for each $U \in L^{\cosh-1}(p)$ and $V \in L^{(\cosh-1)_*}(p)$ we can deduce from Young's inequality 

$$xy \le \Phi(x) + \Phi_{*}(y) \qquad \forall x,y \in \reals^{2},$$
that the following Holder's inequality holds:
 
\begin{equation*}
\absoluteval{\expectat p {UV}} \le  2 \normat {\cosh-1,p} U\,\normat {(\cosh-1)_{*},p} V < \infty \ .
\end{equation*}
Moreover, it is a classical result that  the space $L^{(\cosh-1)_*}(p)$ is separable and its dual space is $L^{\cosh-1}(p)$, the duality pairing being 

\begin{equation*}
(V,U) \in  L^{(\cosh-1)_*}(p) \times L^{\cosh-1}(p)  \longmapsto \scalarat p U V := \expectat p {UV} \ .
\end{equation*}
We recall the following continuous embedding result that we shall use repeatedly in the paper:
\begin{theorem}\label{theo:embed}
Given $p \in \pdensities$, for any $1 < r < \infty$, the following  embeddings 

\begin{equation*}
L^{\infty}(p) \hookrightarrow \Lexp p \hookrightarrow L^{r}(p) \hookrightarrow \LlogL p \hookrightarrow L^{1}(p)
\end{equation*}
are continuous.
\end{theorem}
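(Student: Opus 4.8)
The plan is to obtain the whole chain from the classical comparison principle for Orlicz spaces over a \emph{probability} space. Since $p\in\pdensities$, the measure $p\,\mu$ is a probability measure, and the relevant fact is: if $A,B$ are Young functions and there are constants $\kappa>0$ and $x_0\ge0$ with $A(x)\le B(\kappa x)$ for all $x\ge x_0$, then $L^{B}(p)\hookrightarrow L^{A}(p)$ continuously. The quantitative form is elementary: monotonicity of $A$ gives $A(x)\le A(x_0)+B(\kappa x)$ for every $x\ge0$, so if $\normat{B,p}{U}=\beta$ then $\expectat p{A(|U|/(\kappa\beta))}\le A(x_0)+1$, and the convexity inequality $A(sx)\le sA(x)$ for $s\in[0,1]$ upgrades this to $\expectat p{A\!\left(|U|/(\kappa(1+A(x_0))\beta)\right)}\le1$, that is, $\normat{A,p}{U}\le\kappa(1+A(x_0))\,\normat{B,p}{U}$; see \cite[Chapter II]{MR724434}. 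I will also use that for $1<r<\infty$ the space $L^{r}(p)$ is itself an Orlicz space of this family, the Young function $t\mapsto|t|^{r}$ having Luxemburg norm over $(\Omega,\mathcal F,p\,\mu)$ equal to $\normat{L^{r}(p)}{\cdot}$.

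With this machinery the proof reduces to four asymptotic comparisons. For $L^{\infty}(p)\hookrightarrow\Lexp p$ no comparison lemma is needed: if $\lambda\ge\normat{L^{\infty}(p)}{U}$ then $\Phi(|U|/\lambda)\le\Phi(1)=\cosh 1-1<1$ $p$-a.s., hence $\normat{\cosh-1,p}{U}\le\normat{L^{\infty}(p)}{U}$. For $\Lexp p\hookrightarrow L^{r}(p)$, since $\Phi(x)=\cosh x-1\sim\tfrac{1}{2}\euler^{|x|}$ dominates every power, $|x|^{r}\le\Phi(x)$ once $|x|\ge x_0(r)$, and the lemma applies with $\kappa=1$. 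For $L^{r}(p)\hookrightarrow\LlogL p$, writing $\arsinh y=\lnof{y+\sqrt{1+y^{2}}}$ gives $\Phi_{*}(y)=y\arsinh y+1-\sqrt{1+y^{2}}\sim y\ln y$ as $y\to+\infty$, which is $o(|y|^{r})$ for every $r>1$; hence $\Phi_{*}(y)\le|y|^{r}$ for $|y|\ge y_0(r)$ and the lemma applies. Finally, for $\LlogL p\hookrightarrow L^{1}(p)$ we note $\Phi_{*}(y)/|y|=\arsinh y+(1-\sqrt{1+y^{2}})/|y|\to+\infty$, so $|y|\le\Phi_{*}(y)$ for $|y|$ large and the lemma again yields a continuous inclusion (equivalently, any Orlicz space over a probability space embeds continuously into $L^{1}$). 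Composing the four inclusions gives the theorem.

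The computations are routine, and I do not expect a genuine obstacle; the only points that need a little care are that the two endpoint inclusions involving $L^{\infty}$ and $L^{1}$ sit just outside the $N$-function comparison framework (the first must be argued directly, the second is the generic $L^{1}$-embedding), and that the three nontrivial comparisons all rest on correctly identifying the asymptotics $\Phi(x)\sim\tfrac{1}{2}\euler^{|x|}$ and $\Phi_{*}(y)\sim|y|\ln|y|$.
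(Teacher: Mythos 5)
Your proof is correct. Note that the paper itself offers no proof of this statement: it is explicitly ``recalled'' as a classical fact with a pointer to the Orlicz-space literature (\cite[Chapter II]{MR724434}, \cite{MR2396032}), so there is no in-paper argument to compare against. What you supply is the standard self-contained derivation, and every step checks out: the quantitative comparison lemma is correctly stated and correctly proved (the chain $A(x)\le A(x_0)+B(\kappa x)$ for all $x\ge 0$, then $\expectat p{A(|U|/(\kappa\beta))}\le A(x_0)+1$ using $\expectat p{B(|U|/\beta)}\le 1$ at the Luxemburg norm, then the rescaling $A(sx)\le sA(x)$ with $s=1/(1+A(x_0))$); the identification of $\normat{L^r(p)}{\cdot}$ as the Luxemburg norm of $|t|^r$ over the probability measure $p\,\mu$ is what lets all three middle inclusions fall under the same lemma; and the asymptotics $\Phi(x)\sim\tfrac12\euler^{|x|}$ and $\Phi_*(y)\sim y\ln y$ (from $\arsinh y=\lnof{y+\sqrt{1+y^2}}$) are right, including the crucial direction of the comparison $\Phi_*(y)=o(|y|^r)$ for every $r>1$. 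The direct treatment of the $L^\infty$ endpoint via $\Phi(1)=\cosh 1-1<1$ is also correct. The only cosmetic remark is that the finiteness of the measure $p\,\mu$ is exactly what makes the ``for $x\ge x_0$'' (rather than ``for all $x$'') comparison sufficient, and you use this correctly; on an infinite measure space the chain would fail.
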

From this result, we deduce easily the following useful Lemma
\begin{lemma}\label{lem:technical}
Given $p \in \pdensities$ and $k \ge 1.$ For any $u_{1},\ldots, u_{k} \in \Lexp p$ one has 

\begin{equation*}
\prod_{i=1}^{k} u_{i} \in \bigcap_{1<r<\infty}L^{r}(p) \ .
\end{equation*}
\end{lemma}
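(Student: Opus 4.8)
The plan is to deduce the statement from the single-variable embedding $\Lexp p \hookrightarrow L^{s}(p)$ provided by Theorem~\ref{theo:embed}, combined with the generalized Hölder inequality. First I would fix $r$ with $1<r<\infty$; the goal is then to show $\expectat p {\absoluteval{\prod_{i=1}^{k} u_{i}}^{r}}<\infty$, that is, that $\prod_{i=1}^{k}\absoluteval{u_{i}}^{r}$ is $p$-integrable.

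Next I would split this integrand by Hölder's inequality using $k$ conjugate exponents all equal to $k$ (so that $\sum_{i=1}^{k}\tfrac1k=1$), obtaining
\[
\expectat p {\prod_{i=1}^{k}\absoluteval{u_{i}}^{r}}\le \prod_{i=1}^{k}\left(\expectat p {\absoluteval{u_{i}}^{rk}}\right)^{1/k}=\prod_{i=1}^{k}\normof{u_{i}}_{L^{rk}(p)}^{r}.
\]
Since $k\ge 1$ and $r>1$ we have $1<rk<\infty$, so Theorem~\ref{theo:embed} gives $u_{i}\in\Lexp p\hookrightarrow L^{rk}(p)$ for each $i$; hence every factor on the right-hand side is finite and $\prod_{i=1}^{k}u_{i}\in L^{r}(p)$. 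As $r\in(1,\infty)$ was arbitrary, the asserted intersection membership follows.

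I do not expect a genuine obstacle here: the lemma is a soft corollary of the embedding chain, and the only thing to get right is the bookkeeping of the Hölder exponents (any choice with $\sum_i q_i^{-1}=1$ works, and $q_i=k$ is simply the symmetric one). If a quantitative version is wanted, the same computation records that $(u_{1},\dots,u_{k})\mapsto\prod_{i}u_{i}$ is a bounded $k$-linear map from $(\Lexp p)^{k}$ to $L^{r}(p)$, with $\normof{\prod_{i}u_{i}}_{L^{r}(p)}\le\prod_{i}C_{rk}\,\normof{u_{i}}_{\Phi,p}$ for the embedding constant $C_{rk}$ of $\Lexp p\hookrightarrow L^{rk}(p)$; this sharper form is what one actually uses later when differentiating expectations of products of random variables along curves in the exponential manifold.
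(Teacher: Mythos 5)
Your argument is correct and is exactly the paper's proof: the authors likewise invoke Theorem~\ref{theo:embed} to place each $u_i$ in every $L^s(p)$ and then appeal to (repeated, equivalently generalized) H\"older's inequality with exponents summing reciprocally to one. Your explicit choice $q_i=k$ and the resulting bound $\normat{L^r(p)}{\prod_i u_i}\le\prod_i\normat{L^{rk}(p)}{u_i}$ just make the bookkeeping the paper leaves implicit.
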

\begin{proof}
According to Theorem \ref{theo:embed}, $u_{i} \in L^{r}(p)$ for any $1< r <\infty$ and any $i=1,\ldots,k$. The proof follows then simply from the repeated use of Holder inequality.
\end{proof}
From now on we also define, for any $p \in \pdensities$

\begin{equation*}
\eBspace p = L^\Phi_0(p) := \setof{u \in \Lexp p}{\expectat p u = 0} \ .
\end{equation*}
In the same way, we set 
$^{*}\eBspace p=L^{\Phi_{*}}_{0}(p):=\setof{u \in \LlogL p}{\expectat p u =0}.$

\subsection{Cumulant Generating Functional}

 Let $p \in \mathcal \pdensities$ be given. With the above notations one can define:
 \begin{definition}
 The \emph{cumulant generating functional} is the mapping

\begin{equation*}
K_p  \colon  u \in \eBspace p \longmapsto \log \expectat p {\euler^{u}}  \in [0,+\infty] \ .
\end{equation*} 
\end{definition}
The following result \cite{cena:2002,MR2396032} shows the properties of the exponential function as a superposition mapping \cite{MR1066204}.
\begin{proposition}\label{prop:expisanalytic}\ Let $a \geq 1$ and $p \in \pdensities$ be given.
\begin{enumerate}
\item
For any $n = 0, 1, \dots$ and $u \in L^\Phi(p)$:

\begin{equation*}
 \lambda_{a,n}(u) \colon \left(w_1, \dots, w_n \right)  \mapsto  \dfrac{w_1}{a} \cdots \dfrac{w_n}{a}\ \euler^{\frac ua}
\end{equation*}
is a continuous, symmetric, $n$-multi-linear map from $\left(L^\Phi(p)\right)^{n}$ to $L^{a}\left(p\right)$.%
\item
$v \mapsto \sum_{n = 0}^\infty \frac{1}{n!} \left(\dfrac{v}{a}\right)^n$ is a power series from $L^{\Phi}(p)$ to $L^a(p)$, with radius of convergence  larger than $1$.
\item The superposition mapping, $ v \mapsto \euler^{v/a}$, is an analytic function from the open unit ball of $L^\Phi(p)$ to $L^a(p)$.
\end{enumerate}
\end{proposition}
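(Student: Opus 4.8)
The plan is to prove the three assertions in turn; the first contains essentially all the analytic content, and the other two then follow by routine power-series manipulations. Throughout I read (1)--(2) as concerning $u$ (respectively $v$) in the open unit ball of $L^\Phi(p)$, consistently with (3): this is exactly the regime in which the estimates below close, and for a general $u\in L^\Phi(p)$ the function $\euler^{u}$ need not lie in $L^1(p)$ at all.

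For (1), fix $n$ and $u$ with $\normat{\Phi,p}{u}<1$. Symmetry and $n$-linearity of $\lambda_{a,n}(u)$ in the arguments $(w_1,\dots,w_n)$ are immediate from the defining formula, so the only real point is that it has values in $L^a(p)$ with finite operator norm. I would write
\[
\normat{a,p}{\lambda_{a,n}(u)(w_1,\dots,w_n)}=a^{-n}\left(\expectat{p}{\absoluteval{w_1\cdots w_n}^{a}\,\euler^{u}}\right)^{1/a}
\]
and apply H\"older's inequality with a conjugate pair $(q,q')$, pairing the factor $\euler^{u}$ against $L^{q}(p)$ and $\absoluteval{w_1\cdots w_n}^{a}$ against $L^{q'}(p)$. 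Choosing $q>1$ small enough that $q\,\normat{\Phi,p}{u}\le1$, the definition of the Luxemburg norm together with $\euler^{x}\le 2\cosh x$ gives $\expectat{p}{\euler^{qu}}\le2\,\expectat{p}{\cosh(qu)}\le4$, so $\euler^{u}\in L^{q}(p)$. For the other factor, Lemma~\ref{lem:technical} gives $w_1\cdots w_n\in L^{aq'}(p)$, and the repeated use of H\"older's inequality together with the embedding constants $c_{r}$ of Theorem~\ref{theo:embed} gives $\normat{aq',p}{w_1\cdots w_n}\le\prod_{i=1}^{n}\normat{naq',p}{w_i}\le c_{naq'}^{\,n}\prod_{i=1}^{n}\normat{\Phi,p}{w_i}$. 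Collecting the two factors yields $\normat{a,p}{\lambda_{a,n}(u)(w_1,\dots,w_n)}\le C(n,a,u)\prod_{i=1}^{n}\normat{\Phi,p}{w_i}$ with $C(n,a,u)<\infty$, which is the asserted continuity.

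For (2), I would estimate the terms of the series directly. Expanding $\Phi=\cosh-1=\sum_{k\ge1}x^{2k}/(2k)!$ and using the definition of the Luxemburg norm gives the moment bounds $\expectat{p}{\absoluteval{v}^{m}}\le m!\,\normat{\Phi,p}{v}^{m}$ for even $m$ (a single term of the series is dominated by the whole sum), hence $\normat{r,p}{v}\lesssim r\,\normat{\Phi,p}{v}$ for all $r\ge1$; therefore
\[
\tfrac1{n!}\,\normat{a,p}{(v/a)^{n}}=\tfrac{1}{n!\,a^{n}}\,\normat{na,p}{v}^{\,n}\le\tfrac{((na)!)^{1/a}}{n!\,a^{n}}\,\normat{\Phi,p}{v}^{n},
\]
and by Stirling's formula the prefactor $((na)!)^{1/a}/(n!\,a^{n})$ has $n$-th root tending to $1$. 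So $\sum_{n\ge0}\tfrac1{n!}(v/a)^{n}$ converges absolutely in $L^a(p)$ for every $v$ in the open unit ball, which gives the stated radius of convergence; that its sum is the superposition $v\mapsto\euler^{v/a}$ follows because the partial sums, being Cauchy in $L^a(p)$, converge there and along a subsequence $p$-a.e., to a limit which must coincide with the $p$-a.e.\ limit $\euler^{v/a}$ of the scalar Taylor series. For (3) I would then re-centre: for $v_{0}$ in the unit ball and $h\in L^\Phi(p)$, the scalar identity $\euler^{(v_{0}+h)/a}=\euler^{v_{0}/a}\euler^{h/a}$ and (2) give $\euler^{(v_{0}+h)/a}=\sum_{n\ge0}\tfrac1{n!}\,\lambda_{a,n}(v_{0})(h,\dots,h)$, where by (1) each summand is a continuous $n$-homogeneous polynomial from $L^\Phi(p)$ to $L^a(p)$; rerunning the H\"older/moment estimate with all arguments equal to $h$ and $q$ chosen with $q\,\normat{\Phi,p}{v_{0}}\le1$ shows this series converges in $L^a(p)$, uniformly on $\normat{\Phi,p}{h}\le r$ for every $r<1-\normat{\Phi,p}{v_{0}}$, with sum $\euler^{(v_{0}+h)/a}$ by the same subsequence argument as in (2). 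Hence near each point of the open unit ball $\euler^{\cdot/a}$ is the sum of a convergent power series of continuous homogeneous polynomials, i.e.\ it is analytic.

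The one genuinely delicate step is (1): extracting from the mere membership $u\in L^\Phi(p)$ enough extra integrability of $\euler^{u}$ — which is what confines the argument to a dilate of the unit ball — so as to leave a strictly positive H\"older budget for the product $w_{1}\cdots w_{n}$, and then checking that the embedding constants $c_{naq'}$ stay finite over the range of $n$ used in (2)--(3). Everything else reduces to Theorem~\ref{theo:embed}, Lemma~\ref{lem:technical}, and Stirling's formula.
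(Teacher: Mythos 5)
Your proof is correct. Note that the paper itself gives no proof of this proposition --- it is quoted from \cite{cena:2002,MR2396032} --- so there is nothing internal to compare against; your route (H\"older's inequality pairing $\euler^{u}\in L^{q}(p)$ for some $q>1$ against the product $w_{1}\cdots w_{n}\in L^{aq'}(p)$, the moment bounds $\expectat p{\absoluteval{v}^{2k}}\le (2k)!\,\normat{\Phi,p}{v}^{2k}$ extracted from the Luxemburg ball, and Stirling's formula) is in any case the standard argument of those references. Two remarks. First, your opening caveat is not a cosmetic convention but a genuine correction of the statement: for $u\in L^{\Phi}(p)$ of large norm, $\euler^{u}$ need not lie in $L^{1}(p)$ at all (on $(0,1)$ with $p\equiv 1$, the function $u(x)=-2\log x$ has $\normat{\Phi,p}{u}=2\sqrt{2}$ while $\euler^{u}=x^{-2}\notin L^{1}$, so already $\lambda_{a,0}(u)\notin L^{a}(p)$); item (1) really does require $u$ in the open unit ball, or at least $\euler^{u}\in L^{1+\epsilon}(p)$, exactly as you read it. Second, in item (3) the phrase ``rerunning the H\"older/moment estimate'' conceals the one point where care is needed: the crude embedding bound $\normat{r,p}{h}\lesssim r\,\normat{\Phi,p}{h}$ is not sufficient there, since applied naively it only yields convergence for $\normat{\Phi,p}{h}<1/(\euler\, q')$. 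One must keep the factorial form $\normat{naq',p}{h}^{n}\le ((naq')!)^{1/(aq')}\normat{\Phi,p}{h}^{n}$, as you did in item (2); Stirling then produces the geometric factor $(q')^{n}$ and hence precisely the radius $1/q'=1-\normat{\Phi,p}{v_{0}}$ that you claim. With that step written out, and with the understanding that the argument establishes radius of convergence at least $1$ (which is how the statement should be read), the proof is complete.
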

The cumulant generating functional enjoys the following properties (see \cite{MR1370295,cena:2002,MR2396032}):
\begin{proposition} 
\label{prop:cumulant}\ 
\begin{enumerate}
\item \label{prop:cumulant1}$K_p (0) = 0$; otherwise, for each $u \neq 0$, $K_p (u) > 0$.
\item \label{prop:cumulant2}$K_p$ is convex and lower semi-continuous, and its proper domain

\begin{equation*}
\mathrm{dom}(K_{p})=\setof{u \in \eBspace p}{K_{p}(u) < \infty} 
\end{equation*}
is a convex set that contains the open unit ball of $\eBspace p$.
\item \label{prop:cumulant3}$K_p$ is infinitely G\^ateaux-differentiable in the interior of its proper domain.
\item \label{prop:cumulant4}$K_p$ is bounded, infinitely Fr\'echet-differentiable and analytic on the open unit ball of $\eBspace p$.
\end{enumerate}
\end{proposition}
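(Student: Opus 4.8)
The plan is to handle the four items largely independently: item (1) and the finiteness part of (2) reduce to Jensen's inequality, the convexity and lower semicontinuity in (2) to Hölder's inequality and Fatou's lemma, and (4) to Proposition~\ref{prop:expisanalytic}; only (3) requires a genuinely new argument. For (1), $K_p(0)=\log\expectat p 1=0$, and for $u\neq 0$ with $\expectat p{\euler^u}<\infty$ Jensen's inequality for the strictly convex $\exp$ gives $\expectat p{\euler^u}\ge\euler^{\expectat p u}=1$, with equality only if $u$ is $p$-a.s.\ constant; since $\expectat p u=0$ this forces $u=0$, hence $K_p(u)>0$ (and if $\expectat p{\euler^u}=+\infty$ there is nothing to prove). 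In particular $K_p$ takes values in $[0,+\infty]$. For (2), convexity follows by writing $\euler^{tu+(1-t)v}=(\euler^u)^t(\euler^v)^{1-t}$ and applying Hölder with exponents $1/t$ and $1/(1-t)$, then taking logarithms, so $\mathrm{dom}(K_p)$ is convex; lower semicontinuity follows since every sublevel set $\{u:\expectat p{\euler^u}\le\alpha\}$ is closed, because convergence in $\Lexp p$ implies convergence in $L^1(p)$ (Theorem~\ref{theo:embed}), hence $p$-a.s.\ along a subsequence, whence Fatou's lemma gives $\expectat p{\euler^u}\le\liminf_n\expectat p{\euler^{u_n}}\le\alpha$; and Proposition~\ref{prop:expisanalytic}(3) with $a=1$ shows $\euler^u\in L^1(p)$ whenever $\normat{\Phi,p}u<1$, so the open unit ball lies in $\mathrm{dom}(K_p)$.

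For (3), fix $u_0$ in the interior of $\mathrm{dom}(K_p)$ and choose $\rho>0$ with the closed $\rho$-ball about $u_0$ contained in $\mathrm{dom}(K_p)$. Since that domain is convex and contains $0$, for $\delta>0$ small enough one has $(1+\delta)(u_0+w)\in\mathrm{dom}(K_p)$ for all $\normat{\Phi,p}w\le\rho/2$, that is $\euler^u\in L^{1+\delta}(p)$ for every $u$ in the neighbourhood $N=\{u:\normat{\Phi,p}{u-u_0}\le\rho/2\}$; this improved integrability, which need not hold at an arbitrary point of the domain, is the crux. Given $v\in\eBspace p$, pick $t_0>0$ with $u_0\pm t_0v\in N$. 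Then, as $t\to 0$, the difference quotient $\euler^{u_0}(\euler^{tv}-1)/t$ converges $p$-a.s.\ to $v\,\euler^{u_0}$ and, for $|t|\le t_0$, is dominated by $2|v|(\euler^{u_0+t_0v}+\euler^{u_0-t_0v})$, which lies in $L^1(p)$ by Hölder's inequality since $v\in\Lexp p\subset\bigcap_{1<r<\infty}L^r(p)$ (Theorem~\ref{theo:embed}) while $\euler^{u_0\pm t_0v}\in L^{1+\delta}(p)$. Dominated convergence yields $\left.\derivby t\expectat p{\euler^{u_0+tv}}\right|_{t=0}=\expectat p{v\,\euler^{u_0}}$; iterating the same scheme, the $n$-th $t$-derivative equals $\expectat p{v^n\euler^{u_0+tv}}$, finite and continuous in $t$ because $v^n\in\bigcap_{1<r<\infty}L^r(p)$ by Lemma~\ref{lem:technical}. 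Since $\expectat p{\euler^{u_0}}\ge 1$, composing with $\log$ shows all iterated directional derivatives of $K_p$ at $u_0$ exist, and that they define bounded symmetric multilinear forms on $\eBspace p$ follows from the same Hölder estimates, which is the assertion. (Alternatively one can reduce (3) to (4) via the change of reference density $p\rightsquigarrow\euler^{u_0-K_p(u_0)}p$, but this invokes the identification of the corresponding model spaces, itself part of the later manifold theory.)

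For (4), Proposition~\ref{prop:expisanalytic}(3) with $a=1$ gives that $u\mapsto\euler^u$ is analytic from the open unit ball of $\Lexp p$ into $L^1(p)$; the functional $\phi\mapsto\expectat p\phi$ is bounded and linear on $L^1(p)$ and is $\ge 1$ on the image of the ball by (1), while $\log$ is real-analytic on $(0,+\infty)$, so $u\mapsto\log\expectat p{\euler^u}$ is a composition of analytic maps and hence analytic, in particular infinitely Fréchet-differentiable, on the open unit ball. Boundedness there is a Luxemburg-norm estimate: if $\normat{\Phi,p}u\le\lambda<1$ then $\expectat p{\cosh(u/\lambda)}\le 2$, so by Jensen applied to the concave map $x\mapsto x^\lambda$, $\expectat p{\euler^u}=\expectat p{(\euler^{u/\lambda})^\lambda}\le\bigl(\expectat p{\euler^{u/\lambda}+\euler^{-u/\lambda}}\bigr)^\lambda\le 4^\lambda\le 4$, whence $0\le K_p(u)\le\log 4$.

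The main obstacle is item (3): the interior of $\mathrm{dom}(K_p)$ is in general strictly larger than the open unit ball, so Proposition~\ref{prop:expisanalytic} does not apply directly, and one must first extract the improved integrability $\euler^{u_0}\in L^{1+\delta}(p)$ from the interior-point hypothesis and then construct a dominating function uniform in the differentiation parameter in order to differentiate legitimately under the expectation.
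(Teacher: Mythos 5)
Your proposal is correct; the paper itself gives no proof of this proposition, deferring to the cited references \cite{MR1370295,cena:2002,MR2396032}, and your argument is essentially the standard one found there: Jensen and H\"older for items (1)--(2), the interior-point trick yielding $\euler^{u}\in L^{1+\delta}(p)$ followed by domination and differentiation under the expectation for item (3), and composition of the analytic superposition operator of Proposition~\ref{prop:expisanalytic} with the linear functional $\expectat p \cdot$ and with $\log$ for item (4). You correctly identify the only genuinely delicate point, namely that on the interior of the proper domain (which may exceed the unit ball) one must first upgrade $\euler^{u_0}\in L^1(p)$ to $\euler^{u_0}\in L^{1+\delta}(p)$ before Lemma~\ref{lem:technical} and H\"older can justify the dominated-convergence step.
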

\begin{remark}
One sees from the above property 2 that the interior of the proper domain of $K_{p}$ is a non-empty open convex set. From now on we shall adopt the notation

\begin{equation*}
\sdomain p=\mathrm{Int}\left(\mathrm{dom}(K_{p})\right) \ .
\end{equation*}
\end{remark}

Other properties of the functional $K_p$ are described below, as they relate directly to the exponential manifold.

\section{Exponential manifold}
\label{exponentialmanifold}

 The set of positive densities, $\pdensities$, locally around a given $p \in \pdensities$, is modeled by the subspace of centered random variables in the Orlicz space, $L^{\cosh-1}(p)$. Hence, it is crucial to discuss the isomorphism of the model spaces for different $p$'s in order to show the existence of an atlas defining a Banach manifold.

\begin{definition}[\textit{\textbf{Statistical exponential manifold}}{ \cite[Def. 20]{MR2396032}}]
For $p \in \pdensities$, the \emph{statistical exponential manifold} at $p$ is

\begin{equation*}
  \maxexp p = \setof{\euler^{u - K_p(u)} p}{u \in \sdomain p} \ .
\end{equation*}
\end{definition}

We also need the following definition of connection

\begin{definition}[\textit{\textbf{Connected densities}}]\label{def:smile}
Densities $p,q \in \pdensities$ are connected by an open exponential arc if there exists an open exponential family containing both, {\em i.e.}, if for a neighborhood $I$ of $[0,1]$

\begin{equation*}
 \int_{\Omega} p^{1-t}q^t \ d\mu = \expectat p {\left(\frac qp\right)^t} = \expectat q {\left(\frac pq\right)^{1-t}} < +\infty, \quad t \in I
 \end{equation*}
 In such a case, one simply writes $p \smile q.$
 \end{definition}
\begin{theorem}[\textit{\textbf{Portmanteau theorem}} {\cite[Th 19 and 21]{MR2396032},\cite[Theorem 4.7]{santacroce|siri|trivellato:2015}}\label{prop:maxexp-pormanteau}]\ 
Let $p, q \in \pdensities.$ The following statements are equivalent:
\begin{enumerate}
\item \label{prop:maxexp-pormanteau-1} $p \smile q$ (i.e. $p$ and $q$ are connected by an open exponential arc);
  \item \label{prop:maxexp-pormanteau-2} $q \in \maxexp p$;
  \item \label{prop:maxexp-pormanteau-3} $\maxexp p = \maxexp q$;
  \item \label{prop:maxexp-pormanteau-4} $\log \frac q p \in L^{\Phi}(p) \cap L^{\Phi}(q)$;
  \item \label{prop:maxexp-pormanteau-5} $L^{\Phi}(p)=L^{\Phi}(q)$ (i.e. they both coincide  as vector spaces and their norms are equivalent);
  \item \label{prop:maxexp-pormanteau-6} There exists $\varepsilon >0$ such that 

\begin{equation*}
\frac{q}{p} \in L^{1+\varepsilon}(p) \quad \text{ and } \quad  \frac{p}{q} \in L^{1+\varepsilon}(q) \ 
\end{equation*}
\end{enumerate}
\end{theorem}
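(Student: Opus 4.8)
The plan is to prove the cycle of implications in an order that minimizes work by exploiting the symmetry in $p$ and $q$: roughly $(1)\Leftrightarrow(6)$, then $(1)\Rightarrow(4)\Rightarrow(5)$, then $(5)\Rightarrow(2)$, $(2)\Rightarrow(3)$, and finally $(3)\Rightarrow(1)$, with $(3)\Rightarrow(2)$ being immediate since $p\in\maxexp p$ always (taking $u=0$). Most of these are standard consequences of the definitions once one has the key analytic fact about $K_p$, namely Proposition \ref{prop:cumulant} together with the analyticity in Proposition \ref{prop:expisanalytic}; many of the individual steps are already recorded in \cite{MR2396032} and \cite{santacroce|siri|trivellato:2015}, so the proof is really an assembly job.

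First I would dispatch $(1)\Leftrightarrow(6)$. If $p\smile q$ then $t\mapsto\expectat p{(q/p)^t}$ is finite on an open interval $I\supset[0,1]$; picking any $t=1+\varepsilon\in I$ with $\varepsilon>0$ gives $q/p\in L^{1+\varepsilon}(p)$, and by the symmetric expression $\expectat q{(p/q)^{1-t}}$ evaluated near $t$ slightly negative (which lies in $I$ as well since $I$ is a neighborhood of $[0,1]$, hence of $0$), one gets $p/q\in L^{1+\varepsilon'}(q)$ for some $\varepsilon'>0$. Conversely, from the two $L^{1+\varepsilon}$ conditions one checks that the convex function $t\mapsto\expectat p{(q/p)^t}$ (finite and equal to $1$ at $t=0,1$) is finite at $1+\varepsilon$ and, using the $q$-side integrability together with the identity $\expectat p{(q/p)^t}=\expectat q{(p/q)^{1-t}}$, also finite at $-\varepsilon'$; convexity then forces finiteness on the whole open interval $(-\varepsilon',1+\varepsilon)$, which is a neighborhood of $[0,1]$, so $p\smile q$.

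Next, $(1)\Rightarrow(4)$: writing $u=\log(q/p)$, the arc condition says $\expectat p{\euler^{tu}}<\infty$ for $t$ in a neighborhood of $[0,1]$, in particular for some $t<0$ and some $t>0$, so $\expectat p{\cosh(\alpha u)}<\infty$ for small $\alpha>0$, i.e. $u\in L^\Phi(p)$; applying the same argument with the roles of $p,q$ swapped (and $u\mapsto-u$) gives $u\in L^\Phi(q)$. Then $(4)\Rightarrow(5)$ is the substantive Orlicz-space step: if $\log(q/p)$ lies in both $L^\Phi(p)$ and $L^\Phi(q)$ one shows the two Luxemburg norms are equivalent, which is where Proposition \ref{prop:expisanalytic}(3) enters — the change of density $g\mapsto (q/p)\,g$ is controlled by the analyticity of the exponential superposition operator on a ball whose radius exceeds $1$. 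For $(5)\Rightarrow(2)$ one observes that once the model spaces coincide, $\log(q/p)$ can be centered at $p$ and shown to lie in $\sdomain p$, so $q=\euler^{v-K_p(v)}p$ with $v\in\sdomain p$, i.e. $q\in\maxexp p$. The implication $(2)\Rightarrow(3)$ follows because $\maxexp p$ is, by Proposition \ref{prop:cumulant} and the construction of the charts, an equivalence class under $\smile$; concretely one transports the chart at $p$ to a chart at $q$ and checks the two manifolds have the same underlying set. Finally $(3)\Rightarrow(1)$: if $\maxexp p=\maxexp q$ then both $p\in\maxexp q$ and $q\in\maxexp p$, and writing $q=\euler^{u-K_p(u)}p$ with $u\in\sdomain p$, the fact that $\sdomain p$ is open (the interior of $\mathrm{dom}(K_p)$, Proposition \ref{prop:cumulant}(2)) means $su\in\mathrm{dom}(K_p)$ for $s$ in an open interval containing $[0,1]$, which is exactly the open exponential arc through $p$ and $q$.

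The main obstacle is the equivalence $(4)\Leftrightarrow(5)$, i.e. showing that membership of the single function $\log(q/p)$ in both Orlicz classes already forces the entire spaces $L^\Phi(p)$ and $L^\Phi(q)$ to coincide with equivalent norms. This is not a formal manipulation: it requires quantitative control of how the Luxemburg norm transforms under multiplication by $q/p$, and the natural tool is precisely the statement that $v\mapsto\euler^{v/a}$ is analytic from the open unit ball of $L^\Phi(p)$ into $L^a(p)$ for every $a\ge1$ (Proposition \ref{prop:expisanalytic}), combined with the $\Delta_2$-type estimates on $\Phi_*$ recorded just before Theorem \ref{theo:embed}. Everything else is bookkeeping with convex functions and the definition of $\maxexp p$.
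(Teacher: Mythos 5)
The paper does not prove this theorem; it imports it verbatim from \cite[Th.\ 19 and 21]{MR2396032} and \cite[Theorem 4.7]{santacroce|siri|trivellato:2015}, so there is no in-paper argument to compare against. Judged on its own, your skeleton is logically adequate and the elementary legs are correct: $(1)\Leftrightarrow(6)$ via convexity of $t\mapsto\int p^{1-t}q^t\,d\mu$, $(1)\Rightarrow(4)$, and $(3)\Rightarrow(1)$ (openness and convexity of $\sdomain p$ giving the open arc) are all sound as written. The problems are concentrated in the two implications you yourself flag as substantive, and there they are real gaps, not just compression.

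First, $(4)\Rightarrow(5)$ is asserted rather than proved, and Proposition \ref{prop:expisanalytic}(3) is not the right lever: analyticity of $v\mapsto\euler^{v/a}$ on the \emph{unit ball} says nothing about $U+\log(q/p)$ when $\normat{\Phi,p}{\log(q/p)}\ge 1$. The mechanism that actually works is to first extract $(6)$ from $(4)$ (two lines: $\expectat q{\cosh(\beta\log(q/p))}<\infty$ gives $\expectat p{(q/p)^{1+\beta}}<\infty$, and symmetrically), and then transfer $\Phi$-integrability by H\"older, $\expectat q{\Phi(\alpha U)}=\expectat p{\Phi(\alpha U)\,q/p}\le\normat{L^{r'}(p)}{\Phi(\alpha U)}\,\normat{L^{1+\varepsilon}(p)}{q/p}$, together with the elementary bound $\Phi(x)^{r'}\le C\,\Phi(r'x)+C'$; none of this appears in your sketch. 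Second, and more seriously, $(5)\Rightarrow(2)$ is the genuinely hard direction --- it is essentially the content of the cited Santacroce--Siri--Trivellato theorem --- and your one-sentence argument begins by ``centering $\log(q/p)$'', which presupposes $\log(q/p)\in L^\Phi(p)$, i.e.\ statement $(4)$. That hypothesis is not available at that point of your cycle: if $(5)\Rightarrow(2)$ silently uses $(4)$, your chain only establishes $(1)\Leftrightarrow(2)\Leftrightarrow(3)\Leftrightarrow(6)$ together with $(1)\Rightarrow(4)\Rightarrow(5)$, and the converses $(4)\Rightarrow(1)$ and $(5)\Rightarrow(1)$ are never obtained. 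A workable repair is to prove $(4)\Rightarrow(1)$ directly (from $(4)$ the map $t\mapsto\int p^{1-t}q^t\,d\mu$ is finite at $t=\pm\alpha$ and $t=1\pm\beta$, hence on a neighborhood of $[0,1]$ by convexity), fold $(2)$ and $(3)$ into the $(1)$--$(4)$--$(6)$ block, and handle $(5)\Rightarrow(1)$ as a separate, genuinely nontrivial step along the lines of the cited reference rather than as an ``observation''. You should also note that placing the centered $\log(q/p)$ in the \emph{interior} $\sdomain p$ (needed for $(2)$) itself requires a H\"older argument using the $(1+\varepsilon)$-integrability, not just membership in the domain of $K_p$.
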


 We can now define the charts and atlas of the exponential manifold as follows:
 \begin{definition}[\textit{\textbf{Exponential manifold}} \cite{MR1370295,MR1704564,cena:2002,MR2396032}]
 For each $p \in \pdensities$, define the charts \emph{at $p$} as:

 \begin{equation*}
   s_p \:\colon \: q \in \maxexp p \longmapsto \logof{\frac qp} - \expectat p {\logof{\frac qp}} \in \sdomain p \subset \eBspace p,
 \end{equation*}
 with inverse

 \begin{equation*}
   s_p^{-1} = e_p \:\colon\:  u \in \sdomain p \mapsto \euler^{u - K_p(u)}  p \in \maxexp p \subset \pdensities
 \end{equation*}
 The atlas, $\setof{s_p \colon \sdomain p}{ p \in \pdensities}$ is affine and defines the \emph{exponential (statistical) manifold} $\pdensities$. 
 \end{definition}

We collect here various results from \cite{MR1370295,MR1704564,cena:2002,MR2396032} about additional properties of $K_{p}$.

 \begin{proposition}\label{pr:misc} Let $q = \euler^{u - K_p(u)}  p \in \maxexp p$ with $u \in \sdomain p$.
    \begin{enumerate}
\item \label{item:firsttwo} The first three derivatives of $K_p$ on $\sdomain p$ are:

 \begin{align*}
 d K_p(u)[v]&= \expectat q v \ , \\ 
 d^2 K_p(u)[v_1, v_2] &= \covat q {v_1}{v_2} \ , \\ 
 d^3 K_p(u)[v_1, v_2,v_3] &= \Cov_q(v_1,v_2,v_3) \ . \\ 
 \end{align*}

\item The random variable, $\dfrac q p -1$, belongs to $\mBspace p$ and:

 \begin{equation*}
   d {K_p(u)} [v] = \expectat p {\left( \frac q p - 1\right) v} \ .
\end{equation*}
 In other words, the gradient of $K_p$ at $u$ is identified with an element of the predual space of $\eBspace p$, \textit{viz}. $\mBspace p$, denoted by $\nabla K_p(u) = e^{u - K_p(u)} - 1=\dfrac q p -1$.
 \item \label{item:weakderiv}
The weak derivative of the map, $\sdomain p \ni u\mapsto \nabla K_p(u)
  \in \mBspace p$, at $u$ applied to $w \in B_p$ is given by:

 \begin{equation*}
 d (\nabla K_p(u)){[w]}= \frac q p \left( w- \expectat q w \right)
 \end{equation*}
and it is one-to-one at each point.
\item \label{pr:misc-1} $q/p \in L^{(\cosh-1)_*}(p)$.
   \item  \label{pr:misc-3} $B_q$ is defined by an orthogonality property:

     \begin{equation*}
     B_q = L_0^{\cosh-1}(q) = \setof{u \in L^{\cosh-1}(p)}{ \expectat p {u \frac qp}  = 0} \ .
     \end{equation*}
  \end{enumerate}
 \end{proposition}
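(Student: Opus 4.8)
The plan is to prove the five items essentially in the order stated: item (1) is the analytic core and the rest follow by soft arguments. Proposition \ref{prop:cumulant} already guarantees that $K_p$ is infinitely G\^ateaux-differentiable on $\sdomain p$, so for item (1) it only remains to identify the derivatives by differentiating the restriction $t\mapsto K_p(u+tv)=\log\expectat p{\euler^{u+tv}}$ for $v\in\eBspace p$ and $\absoluteval t$ small. The key point is to differentiate under the expectation: since $u$ is interior to $\mathrm{dom}(K_p)$ one may pick $\eta,\delta>0$ with $(1+\eta)(u+tv)\in\mathrm{dom}(K_p)$ for all $\absoluteval t\le\delta$, so that $\euler^{u+tv}\in L^{1+\eta}(p)$ with norm bounded uniformly in $\absoluteval t\le\delta$, while $v_1\cdots v_k\in\bigcap_{1<r<\infty}L^{r}(p)$ by Lemma \ref{lem:technical}; H\"older's inequality then bounds $v_1\cdots v_k\,\euler^{u+tv}$ uniformly in $L^{1}(p)$ and the standard dominated-differentiation argument applies (alternatively one invokes the analyticity of the superposition map $w\mapsto\euler^{w}$ from Proposition \ref{prop:expisanalytic}). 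This gives $\derivby t K_p(u+tv)=\expectat p{v\,\euler^{u+tv}}/\expectat p{\euler^{u+tv}}$, equal at $t=0$ to $\expectat p{v\,\euler^{u-K_p(u)}}=\expectat q v$; differentiating the ratio a second and a third time yields the centered moments $\covat q{v_1}{v_2}$ and $\Cov_q(v_1,v_2,v_3)$.

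For items (2) and (4): since $u$ is interior to $\mathrm{dom}(K_p)$ we have $(1+\varepsilon)u\in\mathrm{dom}(K_p)$ for some $\varepsilon>0$, i.e. $q/p=\euler^{u-K_p(u)}\in L^{1+\varepsilon}(p)$, hence $q/p\in\LlogL p$ by the embedding $L^{1+\varepsilon}(p)\hookrightarrow\LlogL p$ of Theorem \ref{theo:embed}; this is item (4). As constants lie in $\LlogL p$ and $\expectat p{q/p-1}=\expectat p{q/p}-1=0$, we get $q/p-1\in\mBspace p$, and for $v\in\eBspace p$,
\[
\expectat p{(q/p-1)\,v}=\expectat p{(q/p)\,v}-\expectat p{v}=\expectat q v=dK_p(u)[v]
\]
by item (1); under the pairing $\mBspace p\times\eBspace p\ni(V,U)\mapsto\expectat p{UV}$ this represents $\nabla K_p(u)$ by $q/p-1=\euler^{u-K_p(u)}-1$.

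For item (3) I would differentiate the explicit formula $\nabla K_p(u)=\euler^{u-K_p(u)}-1$ directly: by the chain rule and item (1), $d(\nabla K_p(u))[w]=\euler^{u-K_p(u)}\bigl(w-dK_p(u)[w]\bigr)=\frac qp\bigl(w-\expectat q w\bigr)$. This lies in $\mBspace p$: with $q/p\in L^{1+\varepsilon}(p)$ and $w-\expectat q w\in\bigcap_{1<r<\infty}L^{r}(p)$, H\"older puts the product in $L^{1+\varepsilon'}(p)\hookrightarrow\LlogL p$ for some $\varepsilon'>0$, and it is $p$-centered since $\expectat p{\frac qp(w-\expectat q w)}=\expectat q{w-\expectat q w}=0$; injectivity is immediate because $\frac qp(w-\expectat q w)=0$ $p$-a.s.\ forces $w=\expectat q w$ $p$-a.s.\ (as $q/p>0$), i.e.\ $w$ constant, i.e.\ $w=0$ in $\eBspace p$. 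For item (5): $q\in\maxexp p$ and the Portmanteau Theorem \ref{prop:maxexp-pormanteau} give $\Lexp p=\Lexp q$ as vector spaces with equivalent norms, so $\eBspace q=\setof{u\in\Lexp p}{\expectat q u=0}$; and for $u\in\Lexp p$, H\"older's inequality together with item (4) makes $\expectat p{u\,\frac qp}$ finite and equal to $\expectat q u$, whence $\eBspace q=\setof{u\in\Lexp p}{\expectat p{u\,\frac qp}=0}$.

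The hard part will be the justification of the interchange of derivative and expectation in item (1)---keeping $v_1\cdots v_k\,\euler^{u+tv}$ bounded in $L^{1}(p)$ for small $t$ while $u$ ranges over the full interior of $\mathrm{dom}(K_p)$, not merely over the open unit ball of $\eBspace p$ where the analyticity of Proposition \ref{prop:expisanalytic} applies verbatim. Once that is in place, all the derivative formulas and items (2)--(5) follow by routine use of H\"older's inequality, the embeddings of Theorem \ref{theo:embed}, and the Portmanteau theorem.
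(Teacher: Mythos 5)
Your proposal is essentially correct; note that the paper itself gives no proof of Proposition \ref{pr:misc}, merely collecting these facts from the cited references, so there is no in-text argument to compare against. Your reconstruction follows the standard route from those references. Two small remarks. First, in item (1) a uniform bound on $\normat{L^1(p)}{v_1\cdots v_k\,\euler^{u+tv}}$ is not by itself a domination hypothesis; the clean way to finish is the mean-value estimate $\absoluteval{\tfrac{\euler^{u+tv}-\euler^{u+t_0v}}{t-t_0}}\le\absoluteval v\left(\euler^{u+\delta v}+\euler^{u-\delta v}\right)$ for $\absoluteval t,\absoluteval{t_0}\le\delta$, whose right-hand side is a genuine $L^1(p)$ dominant by exactly the H\"older argument you describe (using that $(1+\eta)(u\pm\delta v)\in\mathrm{dom}(K_p)$ and that $K_p$ is bounded on the segment by convexity). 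Second, the difficulty you flag at the end --- that Proposition \ref{prop:expisanalytic} applies verbatim only on the unit ball --- is usually resolved by recentering: since $q=e_p(u)\in\maxexp p$, the Portmanteau theorem gives $\Lexp p=\Lexp q$, and the identity $K_p(u+w)=K_p(u)+\expectat q w+K_q(w-\expectat q w)$ reduces all derivatives of $K_p$ at the interior point $u$ to derivatives of $K_q$ at $0$, where the unit-ball analyticity applies; your direct domination argument achieves the same thing. Items (2)--(5) are handled correctly and in the natural way (interiority gives $q/p\in L^{1+\varepsilon}(p)\hookrightarrow\LlogL p$, centering gives membership in $\mBspace p$, H\"older and the Portmanteau theorem give the duality formula, the orthogonality description of $B_q$, and the injectivity of $w\mapsto\frac qp(w-\expectat q w)$ from positivity of $q/p$).
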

On the basis of the above result, it appears natural to define the following \emph{parallel transports}:
\begin{definition}\ 
\begin{enumerate}
\item
The \emph{exponential transport} $\etransport p q \colon \eBspace p \to \eBspace q$ is computed as 

\begin{equation*}
\etransport p q u = u - \expectat q u \qquad u \in \eBspace p \ .
\end{equation*}
\item
The \emph{mixture transport} $\mtransport q p \colon \mBspace q \to \mBspace p$ is computed as 

\begin{equation*}
\mtransport q p v = \frac p q v \qquad v \in \mBspace q \ .
\end{equation*}
\end{enumerate}
\end{definition}
One has the following properties
\begin{proposition} Let $p,q \in \pdensities$ be given. Then
\begin{enumerate}
\item $\etransport p q$ is an isomorphism of $B_p$ onto $B_q$.
\item $\mtransport q p$ is an isomorphism of $\mBspace q$ onto $\mBspace p$.
\item The mixture transport  $\mtransport q p$ and the exponential transport $\etransport p q$ are dual of each other:  if $u \in \eBspace q$ and $v \in \mBspace p$ then

\begin{equation*}
  \scalarat q u {\mtransport p q v} = \scalarat p {\etransport q p u} v \ .
\end{equation*}
\item $\mtransport p q$ and $\etransport p q$ together transport the duality pairing: if $u \in \eBspace p$ and $v \in \mBspace p$, then

\begin{equation*}
  \scalarat p u v = \scalarat q {\etransport p q u}{\mtransport p q v} .
\end{equation*}
\end{enumerate}
\end{proposition}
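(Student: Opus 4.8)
Here $p$ and $q$ are taken connected by an open exponential arc, $p\smile q$ — this is the standing hypothesis without which the transports do not even land in the spaces claimed (it holds automatically inside one maximal model, $\maxexp p=\maxexp q$). Under it the Portmanteau Theorem~\ref{prop:maxexp-pormanteau} gives $\Lexp p=\Lexp q$ as vector spaces with equivalent norms (item~\ref{prop:maxexp-pormanteau-5}) and $\log(q/p)\in\Lexp p\cap\Lexp q$ (item~\ref{prop:maxexp-pormanteau-4}); in particular $\eBspace p\subset\Lexp q\subset L^{1}(q)$ by Theorem~\ref{theo:embed}, so $\expectat q u$ is finite for every $u\in\eBspace p$. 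For \emph{Part 1} the plan is to check the four requirements for a topological isomorphism one by one. The map $\etransport p q u=u-\expectat q u$ is linear, and $u-\expectat q u\in\Lexp q$ with $q$-mean zero, so it sends $\eBspace p$ into $\eBspace q$; it is bounded because
\[
\normat{\Phi,q}{u-\expectat q u}\le\normat{\Phi,q}u+\absoluteval{\expectat q u}\,\normat{\Phi,q}1\le\bigl(1+2\normat{\Phi,q}1\,\normat{\Phi_{*},q}1\bigr)\,\normat{\Phi,q}u\le C_{p,q}\,\normat{\Phi,p}u ,
\]
using the Hölder inequality recalled above and the norm equivalence from the Portmanteau theorem; and $\etransport q p$ is a two-sided inverse, since for $u\in\eBspace p$ one has $\etransport q p(\etransport p q u)=(u-\expectat q u)-\expectat p{u-\expectat q u}=u-\expectat p u=u$ (as $\expectat p u=0$), and symmetrically on $\eBspace q$. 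Boundedness of $\etransport q p$ follows from the symmetric estimate, so $\etransport p q$ is a topological isomorphism.

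\emph{Parts 3 and 4} are then a single computation, built from the change-of-measure identity $\expectat q f=\expectat p{(q/p)f}$ (legitimate because $p>0$ $\mu$-a.e.), from the defining action of the mixture transport as reparametrisation of a tangent vector viewed as a signed measure, i.e.\ $(\mtransport p q v)\,q=v\,p$ $\mu$-a.e., and from the centering $\expectat p v=0$ on $\mBspace p$. For $u\in\eBspace q$ and $v\in\mBspace p$,
\[
\scalarat q u{\mtransport p q v}=\expectat q{u\,(\mtransport p q v)}=\expectat p{u\,v}=\expectat p{(u-\expectat p u)\,v}=\scalarat p{\etransport q p u}v ,
\]
which is Part 3; Part 4 is obtained from the same identities with $u\in\eBspace p$, the cross-term vanishing because $\expectat q{\mtransport p q v}=\expectat p v=0$. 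I do not expect any difficulty in this part.

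\emph{Part 2} I would handle last, as it carries the only genuine analytic content. The map $\mtransport q p\colon v\mapsto(q/p)\,v$ is linear with evident two-sided inverse $\mtransport p q$ (the ratios cancel), and its image is $p$-centered, $\expectat p{(q/p)v}=\expectat q v=0$; the point is to show $(q/p)v\in\LlogL p$ for every $v\in\LlogL q$. I would prove this first for bounded $v$, using $q/p\in\LlogL p$ from Proposition~\ref{pr:misc}\,(\ref{pr:misc-1}), and then extend to all $v\in\LlogL q$ by density, the $\Delta_{2}$-property of $\Phi_{*}$ making the bounded functions dense in $\LlogL q$. Once well-definedness is in hand, continuity is free: the identity of Part~3 with $p$ and $q$ interchanged says exactly that $\mtransport q p$ is the restriction to $\mBspace q$ of the Banach adjoint of $\etransport p q$ — here using $\eBspace p=(\mBspace p)^{*}$, which follows from $\left(\LlogL p\right)^{*}=\Lexp p$ on passing to the codimension-one subspaces $\mBspace p\subset\LlogL p$ and $\eBspace p\subset\Lexp p$ — so $\normof{\mtransport q p}\le\normof{\etransport p q}<\infty$ and likewise $\normof{\mtransport p q}\le\normof{\etransport q p}<\infty$, whence $\mtransport q p$ is a topological isomorphism. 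To summarise, the only step that is not pure linear algebra, change of measure, and the centering conditions is the stability $(q/p)\,\LlogL q\subseteq\LlogL p$ (together with the Portmanteau norm-equivalence underlying Part~1), and that is where I expect the real work.
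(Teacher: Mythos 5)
The paper states this proposition without proof (it is collected from \cite{MR1370295,MR1704564,cena:2002,MR2396032}), so there is no in-text argument to compare against; your proof is correct and is essentially the standard one. Three remarks. First, you are right that the statement needs the standing hypothesis $p\smile q$: without $\Lexp p=\Lexp q$ the map $u\mapsto u-\expectat q u$ need not even be defined on $\eBspace p$, and your use of Portmanteau item (\ref{prop:maxexp-pormanteau-5}) plus H\"older for the norm bound in Part 1 is exactly what is required. Second, note that you have silently (and correctly) used the convention $\mtransport q p v=\frac qp\,v$, i.e.\ source density over target density, encoded in your identity $(\mtransport p q v)\,q=v\,p$; the paper's displayed definition reads $\mtransport q p v=\frac pq\,v$, which is a typo --- with that formula the image would not be $p$-centered and items 3--4 would fail --- whereas the usage elsewhere in the paper (e.g.\ $F_p(u)=\frac{e_p(u)}{p}\,F\circ e_p(u)$ and $\nabla K_p(u)=\frac qp-1\in\mBspace p$) confirms the convention you adopted. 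Third, in Part 2 your two-stage argument (bounded $v$ via $q/p\in\LlogL p$ and solidity of the Orlicz space, then density using $\Delta_2$) works, but it obliges you to check that the abstract continuous extension coincides a.e.\ with multiplication by $q/p$; the duality estimate you already invoke for continuity in fact gives well-definedness in one stroke, since for every $u\in\Lexp p=\Lexp q$ one has
\begin{equation*}
\absoluteval{\expectat p{\tfrac qp\,v\,u}}=\absoluteval{\expectat q{v\,u}}\le 2\,\normat{(\cosh-1)_*,q}{v}\,\normat{\cosh-1,q}{u}\le C_{p,q}\,\normat{(\cosh-1)_*,q}{v}\,\normat{\cosh-1,p}{u},
\end{equation*}
and finiteness of this supremum over the unit ball of $\Lexp p$ characterizes membership of $\frac qp\,v$ in $\LlogL p$ (equivalence of the Orlicz and Luxemburg norms), bounding its norm at the same time. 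With that streamlining, everything else in your write-up --- the two-sided inverses, the centering computations, and the change-of-measure identities proving Parts 3 and 4 --- is complete.
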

We reproduce here a scheme of how the affine manifold works. The domains of the charts centered at $p$ and $q$ respectively are either disjoint or equal if $p \smile q$:

\begin{equation*}
\xymatrix{%
\maxexp p \ar[r]^{s_p}\ar@{=}[d] & \sdomain p \ar[d]_{s_q\circ s_p^{-1}} \ar[r]^{I} & \eBspace p \ar[d]^{d(s_q\circ s_p^{-1})} \ar[r]^{I} & L^{\cosh -1}(p) \ar@{=}[d] \\ 
\maxexp q \ar[r]_{s_q} & \sdomain q \ar[r]_{I} & \eBspace q \ar[r]_{I} & L^{\cosh-1}(q)
}
\end{equation*}

Our discussion of the tangent bundle of the exponential manifold is based on the concept of the velocity of a curve as in \cite[\S 3.3]{MR960687}  and it is mainly intended to underline its statistical interpretation, which is obtained by identifying curves with one-parameter statistical models. For a statistical model $p(t)$, $t \in I$, the random variable, $\dot p(t)/p(t)$ (which corresponds to the \emph{Fisher score}), has zero expectation with respect to $p(t)$, and its meaning in the exponential manifold is velocity; see \cite{MR882001} on exponential families. More precisely, let $p(\cdot) \colon I \to \maxexp p$, $I$ the open real interval containing zero. In the chart centered at $p$, the curve is $u(\cdot) \colon I \to \eBspace p$, where $p(t) = \euler^{u(t)-K_p(u(t))}  p$.

 \begin{definition}[Velocity field of a curve and tangent bundle]\  
 \begin{enumerate}
 \item\label{item:velocity}
 Assume $t \mapsto u(t) = s_p(p(t))$ is differentiable with derivative $\dot u(t)$. Define:

 \begin{equation}
   \frac{D}{dt} p(t) = \etransport p {p(t)} \dot u(t) = \dot u(t) - \expectat {p(t)} {\dot u(t)} = \derivby t {(u(t) - K_{p}(u(t))} = \derivby t {\logof{\frac{p(t)}{p}}} = \frac{\derivby t p(t)}{p(t)} \ .
 \end{equation}
 Note that $D p$ does not depend on the chart $s_p$ and that the derivative of $t \mapsto p(t)$ in the last term of the equation is computed in $L^{\Phi_*}(p)$. The curve $t \mapsto (p(t), D p(t))$ is the \emph{velocity field} of the curve.
 \item On the set $\setof{(p,v)}{p \in \pdensities, v \in \eBspace p}$, the charts:

   \begin{equation}
     s_p \colon \setof{(q,w)}{q \in \maxexp p, w \in \eBspace q} \ni (q,w) \mapsto (s_p(q),\etransport q p w) \in \sdomain p \times \eBspace p \subset \eBspace p \times \eBspace p
   \end{equation}
 define the \emph{tangent bundle}, $T\pdensities$.
 \end{enumerate}
 \end{definition}

\begin{remark}
Let $E \colon \maxexp p \to \reals$ be a $C^1$ function. Then, $E_p = E \circ e_p \colon \sdomain p \to \reals$ is differentiable and:

 \begin{equation*}
   \derivby t E(p(t)) = \derivby t E_p(u(t)) = dE_p(u(t)) \dot u(t) = dE_p(u(t)) \etransport {p(t)} p D p(t)
 \end{equation*}
\end{remark}
\subsection{Pretangent Bundle}
\label{sec:pretangent-bundle}

Let $M$ be a density and $\maxexp M$ its associated exponential manifold. Here $M$ is generic, later it will be the Maxwell distribution. All densities are assumed to be in $\maxexp M$.

\begin{definition}[Pretangent bundle $\mTof p$]
The set

\begin{equation*}
  \mTof M = \setof{(q,V)}{q \in \maxexp M, V \in \mBspace q}
\end{equation*}
together with the charts:

\begin{equation*}
  \prescript{*}{}s_p \colon \mTof M  \ni (q,V) \mapsto \left(s_p(q), \mtransport q p V\right)
\end{equation*}
is the \emph{pretangent bundle}, $\mTof M$.
\end{definition}
Let $F$ be a vector field of the pretangent bundle, $F \colon \maxexp M \to \mTof M$. In the chart centered at $p$, the vector field is expressed by

  \begin{equation*}
    F_p(u) = \mtransport {e_p(u)} p {F\circ e_p (u)} = \frac {e_p(u)} p F \circ e_p(u) \in \mBspace p, \quad u \in \sdomain M 
  \end{equation*}
If $F_p$ is of class $C^1$ with derivative $dF_p(u) \in L(\eBspace p, \mBspace p)$, for each differentiable curve $t \mapsto p(t) = \euler^{U(t) - K_p(U(t))} \cdot p$ we have $D p(t) = \derivby t \log p(t) = \dot U(t) - \expectat {p(t)} {\dot U(t)}$ and also 

\begin{equation*}
  \derivby t F_p(U(t)) = d F_p(U(t)) [\dot u(t)] = dF_p(u(t)) [\etransport {p(t)} p {D p(t)}] \in \mBspace p
\end{equation*}
\begin{definition}[Covariant derivative in $\mTof M$]
Let $F$ be a vector field of class $C^1$ of the pretangent bundle $\mTof M$, and let $G$ be a continuous vector field in the tangent bundle $\eTof M$. The \emph{covariant derivative} is the vector field $D_G F$ of $\mTof M$ defined at each $q \in \maxexp M$ by

\begin{equation*}
      (D_GF)(q) = \left. d F_q(u)\right|_{u = 0} [w], \quad w = G(q)
    \end{equation*}
\end{definition}
In the definition above the covariant derivative is computed in the mobile frame because its value at $q$ is computed using the expression in the chart centered at $q$. In a fixed frame centered at $p$ we write $s_p(q) = w$ so that $e_q(u) = e_p(u -\expectat p u + w)$, and compare the two expressions of $F$ as follows.

\begin{multline*}
  F_q(u) = \mtransport {e_q(u)} q F \circ e_q(u) = \\ \mtransport {e_q(u)} q \mtransport p {e_p(u - \expectat p u + w)} \mtransport {e_p(u - \expectat p u + w)} p F \circ e_p(u - \expectat p u + w) = \\ \mtransport p q  \mtransport {e_p(u - \expectat p u + w)} p F \circ e_p(u - \expectat p u + w) = \mtransport p q  F_p (u - \expectat p u + w). 
\end{multline*}
Derivation in the direction $v \in \eBspace q$ gives

\begin{equation*}
  d F_q(u) [v] = \mtransport p q  d F_p (u - \expectat p u + w) [v - \expectat p v],
\end{equation*}
hence $d F_q (u) = \mtransport p q dF_p(\etransport q p u + w) \etransport q p$ and at $u=0$, we have $dF_q(0) = \mtransport p q dF_p(w) \etransport q p$. It follows that the covariant derivative in the fixed frame at $p$ is

\begin{equation*}
  (D_G F)(q) = \mtransport p q dF_p(s_p(q)) \etransport q p G(q). 
\end{equation*}
The tangent and pretangent bundle can be coupled to produce the vector bundle of order 2 defined by

\begin{equation*}
  (\prescript{*}{}T \times T)\maxexp M = \setof{(p,v,w)}{p \in
    \maxexp M, v \in \eBspace p, w \in \mBspace p}
\end{equation*}
with charts 

\begin{equation*}
  (q,v,w) \mapsto (s_p(q), \etransport q p v, \mtransport q p w)
\end{equation*}
and the duality coupling:

\begin{equation*}
  (q,v,w) \mapsto \scalarat q {\etransport q p v}{\mtransport q p w}
\end{equation*}
\begin{proposition}[Covariant derivative of the duality coupling] \label{prop:der.duality}
Let $F$ be a vector field of $\mTof M$, and let $G, X$ be vector fields of $\eTof M$, $F, G$ of class $C^1$ and $X$ continuous.

\begin{equation*}
  D_X \scalarof F G = \scalarof {D_X F} G + \scalarof F {D_X G}
\end{equation*}
\end{proposition}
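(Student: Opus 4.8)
The plan is to reduce the statement to the ordinary Leibniz rule for the derivative of a bilinear pairing, by working in a fixed chart centered at an arbitrary point $p$ and using the explicit expressions for the covariant derivatives in the mobile-to-fixed-frame form derived just above the statement. Fix $p \in \maxexp M$ and recall that for a vector field $F$ of $\mTof M$ of class $C^1$ we have, in the fixed frame at $p$,
\begin{equation*}
  (D_X F)(q) = \mtransport p q\, dF_p(s_p(q))\, \etransport q p X(q),
\end{equation*}
and similarly $(D_X G)(q) = \mtransport p q\, dG_p(s_p(q))\, \etransport q p X(q)$ for a $C^1$ vector field $G$ of $\eTof M$ (the covariant derivative of a tangent vector field being defined by the same recipe with the roles of the two transports interchanged, so that $G_q(u) = \etransport p q G_p(u - \expectat p u + w)$ and $dG_q(0) = \etransport p q dG_p(w) \etransport q p$). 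The scalar field $q \mapsto \scalarof{F}{G}(q) = \scalarat q {F(q)}{G(q)}$ is, via the duality coupling on $(\prescript{*}{}T \times T)\maxexp M$, equal in the chart at $p$ to the function $u \mapsto \scalarat p {F_p(u)}{G_p(u)}$, since $\etransport p q$ and $\mtransport p q$ together transport the duality pairing (Proposition before the curve/velocity discussion).

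First I would make precise how $D_X$ acts on a scalar function $E \colon \maxexp M \to \reals$: by the Remark after the velocity-field definition, $\derivby t E(p(t)) = dE_p(u(t))\,\etransport{p(t)}p D p(t)$, so $D_X E$ at $q$ is $dE_p(s_p(q))\,[\etransport q p X(q)]$ in the chart at $p$; with $E(q) = \scalarat p {F_p(s_p(q))}{G_p(s_p(q))}$ this is the derivative of a composition of the smooth bilinear evaluation map $L(\eBspace p,\mBspace p$-valued$) \times \cdots$ — more concretely, of $u \mapsto \scalarat p {F_p(u)}{G_p(u)}$ — in the direction $w := \etransport q p X(q)$. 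Applying the ordinary product rule for the $C^1$ bilinear map $(V, v) \mapsto \scalarat p V v$ on $\mBspace p \times \eBspace p$ gives
\begin{equation*}
  d\big(u \mapsto \scalarat p {F_p(u)}{G_p(u)}\big)[w] = \scalarat p {dF_p(s_p(q))[w]}{G_p(s_p(q))} + \scalarat p {F_p(s_p(q))}{dG_p(s_p(q))[w]}.
\end{equation*}
Evaluating at $u = s_p(q)$ and substituting $w = \etransport q p X(q)$, the two terms on the right are exactly $\scalarat p {(\mtransport q p)^{-1}(D_X F)(q)}{G_p(s_p(q))}$ and $\scalarat p {F_p(s_p(q))}{(\etransport q p)^{-1}(D_X G)(q)}$; pulling these back through the transports to the fiber at $q$ (again using that the pair of transports preserves the pairing) identifies them with $\scalarat q {(D_X F)(q)}{G(q)}$ and $\scalarat q {F(q)}{(D_X G)(q)}$ respectively, which is the claimed identity.

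The one point requiring care — and the main obstacle — is the justification that the chart expressions $F_p$ and $G_p$ are genuinely $C^1$ as maps into $\mBspace p$ and $\eBspace p$ and that the evaluation/pairing map is differentiable, so that the classical Leibniz rule may be invoked: this needs the continuity of the bilinear duality pairing $L^{\Phi_*}(p) \times L^{\Phi}(p) \to \reals$ (the Hölder inequality recalled in Section \ref{sec:model-spaces}) together with the fact, built into the definition of the covariant derivative, that $u \mapsto dF_p(u) \in L(\eBspace p, \mBspace p)$ and $u \mapsto dG_p(u) \in L(\eBspace p, \eBspace p)$ are continuous. Granting these — all of which are either hypotheses or established earlier — the computation is the routine one sketched above, and the independence of the final expression from the choice of $p$ follows because $D_X F$, $D_X G$ and the pairing are all intrinsically defined.
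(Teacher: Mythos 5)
Your argument is correct and is essentially the paper's own proof: both express $\scalarof F G$ in the chart at $p$ as $u \mapsto \expectat p{F_p(u)G_p(u)}$ and apply the Leibniz rule for the continuous bilinear duality pairing, identifying the two resulting terms with the covariant derivatives of $F$ and $G$. The only differences are cosmetic --- the paper differentiates at $u=0$ (i.e.\ at the centre $q=p$ of the chart, with $p$ arbitrary) whereas you carry the fixed-frame transports along at a general $q$; note the harmless slip that $dF_p(s_p(q))[\etransport q p X(q)]$ equals $\mtransport q p (D_XF)(q)$, not $\left(\mtransport q p\right)^{-1}(D_XF)(q)$.
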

\begin{proof}
Consider the real function $\mathcal E \ni q \mapsto \scalarof F G (q)
= \expectat q {F(q)G(q)}$ in the chart centered at any $p \in \maxexp M$:

\begin{multline*}
  \sdomain p  \ni u \mapsto \expectat {e_p(u)} {F(e_p(u))G(e_p(u))} = \\ \expectat p
  {\mtransport {e_p(u)} p {F \circ e_p(u)} \etransport {e_p(u)} p {G \circ e_p(u)}}
  = \expectat p {F_p(u)G_p(u)}, 
\end{multline*}
and compute its derivative at 0 in the direction $X(p)$.
\end{proof}

We refer to \cite{MR3126029,MR3130268} for further details on the geometric structure, namely the Hilbert bundle, the tangent mapping of an homeomorphism, the Riemannian Hessian. We now turn to a basic example.

\subsection{Kullback-Leibler Divergence}
\label{sec:kullb-leibl-diverg}

The Kullback-Leibler divergence \cite{MR0039968} on the exponential manifold $\Maxexp$ is the mapping

\begin{equation*}
  \mathbf{D}\: \colon\:  (q_1,q_2) \in \mathcal E \times \mathcal E \longmapsto \KL {q_1}{q_2} = \expectat {q_1} {\logof{\frac {q_1}{q_2}}}. 
\end{equation*}
Notice that, if $q_{2} = \euler^{u - K_{q_{1}}(u)} \cdot q_{1} \in \maxexp {q_{1}}$, $u \in \sdomain {q_1}$, then $K_{q_{1}}(u) = \expectat {q_{1}} {\logof{q_{1}/q_{2}}}$ is the expression in the chart centered at $q_{1}$ of the marginal Kullback-Leibler divergence $q_2 \mapsto \KL {q_1}{q_{2}}$. Therefore, the Kullback-Leibler divergence is non-negative valued and zero if and only if $q_2=q_1$ because of Theorem \ref{prop:maxexp-pormanteau}, item (\ref{prop:maxexp-pormanteau-4}). Its expression in the chart centered at a generic $p \in \Maxexp$ is

\begin{equation*}
  D_p \:\colon\: (u_1,u_2) \in \sdomain {p} \times \sdomain {p}  \longmapsto K_p(u_2) - K_p(u_1) - dK_p(u_1)[u_2 - u_1] \ ,
\end{equation*}
which is the Bregman divergence \cite{MR0215617} of the convex function $K_p \colon \sdomain p \to \reals$. 

It follows from Proposition \ref{prop:cumulant}.(\ref{prop:cumulant4}) that it is $C^\infty$ jointly in both variables and, moreover, analytic with

\begin{equation*}
  D_p(u_1,u_2) = \sum_{n \ge 2} \frac 1{n!}d^n K_p(u_1) [(u_1 - u_2)^{\otimes n}], \quad \normat {\Phi,p} {u_1 - u_2} < 1.
\end{equation*}
This regularity result is to be compared with what is available when the restriction, $q_1 \smile q_2$, is removed, {\em i.e.}, the semi-continuity \cite[\S 9.4]{MR2401600}.

The partial derivative of $D_p$ in the first variable, that is the derivative of $u_1 \mapsto D_p(u_1,u_2)$, in the direction $v \in \eBspace p$ is

\begin{equation*} d (u_1 \mapsto  D_p(u_1,u_2)) [v]= - d^2 K_p(u_1)[u_2-u_1,v] = - \covat {q_1} {\log\frac{q_2}{q_1}}{v},
\end{equation*}
with $ \quad q_i = \operatorname{e}_p(u_i)$, $i = 1,2$. If $v = \etransport {q_1} p w$, we have $- \covat {q_1} {\log\frac{q_2}{q_1}}{v} = \covat {q_1} {\log\frac{q_1}{q_2}}{w}$, so that we can compute both the covariant derivative of the partial functional $q \mapsto \KL {q} {q_2}$ and its gradient as

\begin{align*}
  D_{w}(q \mapsto \KL {q} {q_2}) &= \covat {q} {\log\frac{q}{q_2}}{w} = \expectat {q} {\left(\log\frac{q}{q_2} - \KL q {q_2}\right)w} , \\ \nabla (q \mapsto \KL {q} {q_2}) &= \log\frac{q}{q_2} - \KL q {q_2}.
\end{align*}
The negative gradient flow is 

\begin{equation*}
  \label{eq:gradflowd1}
  \derivby t \log\frac{q(t)}{q_2} = - \log\frac{q(t)}{q_2} + \KL {q(t)} {q_2}.
\end{equation*}
As $\derivby t \left(\euler^t \log\frac{q(t)}{q_2}\right) = \euler^t \KL {q(t)} {q_2}$, for each $t$ the random variable

\begin{equation*}
  \euler^t \log\frac{q(t)}{q_2} - \log\frac{q(0)}{q_2} = \euler^t \logof{\frac{q(t)}{q_2}\left(\frac{q_2}{q(0)}\right)^{\euler^{-t}}} = \euler^t \logof{q(t)\frac{q_2^{\euler^{-t}-1}}{q(0)^{\euler^{-t}}}} 
\end{equation*}
is constant, so that $q(t) \propto q(0)^{\euler^{-t}}q_2^{1-\euler^{-t}}$. It is the exponential arc of $q(0) \smile q_2$ in an exponential time scale.

The partial derivative of $D_p$ in the second variable, that is the derivative of $u_2 \mapsto D_p(u_1,u_2)$, in the direction $v \in \eBspace p$ is

\begin{equation*}
  d (u_2 \mapsto  D_p(u_1,u_2)) v = d K_p(u_2) [v] -d K_p(u_1) [v] = \expectat {q_2} {v} - \expectat {q_1} {v} \ ,
\end{equation*}
with $ \quad q_i = \operatorname{e}_p(u_i)$, $i = 1,2$. If $v = \etransport {q_2} p w$, we have $\expectat {q_2} {v} - \expectat {q_1} {v} = \expectat {q_2} {w} - \expectat {q_1} {w}$, so that we can compute both the covariant derivative of the partial functional $q \mapsto \KL {q_1} {q}$ and its gradient as

\begin{align*}
  D_{w}(q \mapsto \KL {q_1} {q}) &= \expectat {q} {w} - \expectat {q_1} {w} = \expectat q {\left(1 - \frac {q_1}q\right) w}, \\ \nabla (q \mapsto \KL {q_1} {q}) &= 1 - \frac {q_1} {q} \ .
\end{align*}
The negative gradient flow is 

\begin{equation*}
  \label{eq:gradflowd2}
  \frac{\dot q(t)}{q(t)} = -\left(1 - \frac {q_1}{q(t)}\right) = \frac{q_1-q(t)}{q(t)} \ ,
\end{equation*}
whose solution starting at $q_0$ is $q(t) = q_1 + (q_0-q_1) \euler^{-t}$. It is a mixture model in an exponential time scale.

\section{Gaussian space}
\label{chap:gaussian-space}

In this Section the sample space is $\reals^n$, $M$ denotes the standard $n$-dimensional Gaussian 
  density (we denoted it $M$ because of James Clerk Maxwell (1831 -- 1879))

\begin{equation}\label{Maxw}
M(x)=\dfrac{1}{(2\pi)^{n/2}}\exp\left(-\dfrac{|x|^{2}}{2}\right), \qquad x \in \reals^{n}\end{equation}
 and $\maxexpM$ is the exponential manifold containing $M$. We recall that the Orlicz space $\Lexp M$ is defined with the Young function $\Phi:=x \mapsto \cosh x - 1$. The following propositions depend on the specific properties of the Gaussian density $M$. They do not hold in general.

\begin{proposition} \ 
1. The Orlicz space $\Lexp M$ contains all polynomials with degree up to 2.  

2. The Orlicz space $\LlogL M$ contains all polynomials.
\end{proposition}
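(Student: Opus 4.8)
The plan is to reduce each of the two statements to an elementary integrability estimate against the Gaussian measure $M$, using the characterisations of Orlicz membership recalled in Section~\ref{sec:model-spaces}. Recall that $U \in \Lexp M$ if and only if $\expectat M {\Phi(\alpha U)} < \infty$ for some $\alpha > 0$, where $\Phi(x) = \cosh x - 1$, and that, by Theorem~\ref{theo:embed}, $L^{r}(M) \hookrightarrow \LlogL M$ continuously for every $1 < r < \infty$. The key analytic input is the classical fact that $M$ has finite moments of all orders, together with the exponential integrability $\expectat M {\euler^{s|x|^2}} < \infty$ precisely for $s < 1/2$.

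For the first claim, fix a polynomial $U$ on $\reals^n$ of degree at most $2$. Splitting $U$ into its quadratic, linear and constant parts and using $|x| \le \tfrac12(1 + |x|^{2})$, one finds constants $a, b \ge 0$ with $|U(x)| \le a + b|x|^{2}$ for all $x \in \reals^n$. Combining this with the pointwise bound $\cosh t - 1 \le \euler^{|t|}$ gives, for every $\alpha > 0$,
\[
  \expectat M {\Phi(\alpha U)} \;\le\; \expectat M {\euler^{\alpha|U|}} \;\le\; \euler^{\alpha a}\,\expectat M {\euler^{\alpha b|x|^{2}}} \;=\; \euler^{\alpha a}\,(1 - 2\alpha b)^{-n/2},
\]
which is finite as soon as $2\alpha b < 1$. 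Choosing $\alpha$ small enough therefore shows $U \in \Lexp M$. The same computation makes the degree restriction transparent: if a polynomial grows faster than quadratically then $\euler^{\alpha|U|}$ fails to be $M$-integrable for every $\alpha > 0$, so no monomial of degree $\ge 3$ belongs to $\Lexp M$; this is why the statement is \emph{sharp} and does not hold in general.

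For the second claim, let $V$ be an arbitrary polynomial on $\reals^n$; then $|V(x)| \le C(1 + |x|)^{d}$ for suitable $C \ge 0$ and $d \in \mathbb N$. Since $M$ has finite moments of all orders, $\expectat M {|V|^{r}} < \infty$ for every $r \in [1,\infty)$, so in particular $V \in L^{2}(M)$. The continuous embedding $L^{2}(M) \hookrightarrow \LlogL M$ furnished by Theorem~\ref{theo:embed} then yields $V \in \LlogL M$, as required.

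I do not expect any genuine obstacle: the only point needing a moment's care is, in the first part, that the quadratic growth of $U$ is exactly matched to the Gaussian decay, so that $\alpha$ must be taken strictly below $1/(2b)$ — and this same matching is what prevents the first statement from extending to polynomials of higher degree. The second part is a direct consequence of the moment bounds for $M$ and the embedding theorem already recorded above.
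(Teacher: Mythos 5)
Your proof is correct and follows essentially the same route as the paper: for part 1 you absorb the (at most quadratic) exponent into the Gaussian weight for $\alpha$ small enough, which is exactly the paper's condition that $\alpha\operatorname{Hess}f - I$ be negative definite, merely packaged via the crude bound $|U(x)|\le a+b|x|^2$ so that both signs of the exponent are handled at once; for part 2 you use, as the paper does, that polynomials lie in $L^2(M)$ together with the embedding $L^2(M)\subset\LlogL M$. No gaps.
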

\begin{proof} 1. If $f$ is a polynomial of degree $d \le 2$ then

  \begin{equation*}
 \int_{\reals^{n}} \euler^{\alpha f(x)} M(x) \ dx =\dfrac{1}{(2\pi)^{n/2}} \int_{\reals^{n}}  \euler^{\alpha f(x) - \frac12 \absoluteval x^2} \ dx   
  \end{equation*}
and the latter is finite for all $\alpha$ such that $\alpha \hessianof f - I$ is negative definite.

2. The result comes from the fact that all polynomials belong to $L^{2}(M)$ and one has $L^2(M) \subset \LlogL M$.\end{proof}

\subsection{Boltzmann-Gibbs Entropy}
\label{sec:boltzmann-entropy}

While the Kullback-Leibler divergence $\KL{q_1}{q_2}$ of Sec. \ref{sec:kullb-leibl-diverg} is defined and finite if the densities $q_1$ and $q_2$ belong to the same exponential manifold, the \emph{Boltzmann-Gibbs entropy}  (BG-entropy in the sequel)

$$H(q) = - \expectat q {\logof q}$$ could be either non defined or infinite, precisely $-\infty$, everywhere on some exponential manifolds, or finite everywhere on other exponential manifolds. 

\begin{proposition}
Assume $p \smile q$, Then:
\begin{enumerate}
\item For each $a \ge 1$, $\log p \in L^a(p)$ if, and only if, $\log q \in L^a(q)$.
\item $\log p \in \Lexp p$ if, and only if, $\log q \in \Lexp q$.
\end{enumerate}
\end{proposition}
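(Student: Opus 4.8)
The plan is to reduce both equivalences to a single change-of-measure statement for $\log p$, exploiting that the two log-densities differ by an element of the exponential Orlicz spaces. Write $g = \logof{q/p} = \log q - \log p$. By the Portmanteau theorem \ref{prop:maxexp-pormanteau}, $p \smile q$ gives $g \in \Lexp p \cap \Lexp q$ (item (\ref{prop:maxexp-pormanteau-4})), the identity $\Lexp p = \Lexp q$ with equivalent norms (item (\ref{prop:maxexp-pormanteau-5})), and an $\varepsilon>0$ with $q/p \in L^{1+\varepsilon}(p)$ and $p/q \in L^{1+\varepsilon}(q)$ (item (\ref{prop:maxexp-pormanteau-6})). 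Since $\smile$ is symmetric, it suffices to prove each ``only if'' direction; the converse follows by exchanging $p$ and $q$, which replaces $g$ by $-g$. Part (2) is then immediate: by item (\ref{prop:maxexp-pormanteau-5}) the sets coincide and $g\in\Lexp p$, so
\[
\log p \in \Lexp p \iff \log q = \log p + g \in \Lexp p = \Lexp q \iff \log q \in \Lexp q,
\]
the middle equivalence because $\Lexp p$ is a vector space containing $g$.

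For part (1) I would first strip off $g$. By Theorem \ref{theo:embed}, $g \in \Lexp p \cap \Lexp q$ lies in $L^r(p)\cap L^r(q)$ for every $1<r<\infty$, in particular in $L^a(p)\cap L^a(q)$. The triangle inequality in $L^a(q)$ gives $\log q\in L^a(q)\iff \log p\in L^a(q)$, and likewise $\log p\in L^a(p)\iff \log q\in L^a(p)$. Hence the entire content of (1) is the transfer across the change of measure
\[
\int \absoluteval{\log p}^a\, p\,d\mu < \infty \quad\Longrightarrow\quad \int \absoluteval{\log p}^a\, q\,d\mu < \infty,
\]
i.e. integrating the \emph{same} function $\absoluteval{\log p}^a$ first against $p$ and then against $q=\euler^{g}p$.

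To carry out the transfer I would split $\Omega$ by the size of the likelihood ratio $\euler^{g}=q/p$. On $\{g\le 0\}$ one has $\euler^{g}\le 1$, so $\int_{\{g\le0\}}\absoluteval{\log p}^a q\,d\mu\le\int\absoluteval{\log p}^a p\,d\mu<\infty$, and on $\{0<g\le\Lambda\}$ the bounded weight $\euler^{g}\le\euler^{\Lambda}$ controls the piece the same way. On the tail $\{g>\Lambda\}$ I would split once more, comparing $\absoluteval{\log p}^a$ with the surplus weight $\euler^{\varepsilon g}$ supplied by the open arc. Where $\absoluteval{\log p}^a\le\euler^{\varepsilon g}$ the whole integrand is dominated, $\absoluteval{\log p}^a\euler^{g}\le\euler^{(1+\varepsilon)g}$, so that part is bounded by $\int\euler^{(1+\varepsilon)g}p\,d\mu<\infty$ from item (\ref{prop:maxexp-pormanteau-6}). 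On the complementary part $\euler^{\varepsilon g}<\absoluteval{\log p}^a$ the exponent is only logarithmic in $\absoluteval{\log p}$, namely $g<\tfrac{a}{\varepsilon}\log\absoluteval{\log p}$, so the weight is merely polynomial, $\euler^{g}<\absoluteval{\log p}^{a/\varepsilon}$.

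I expect this complementary region to be the main obstacle. Pointwise the change of measure is almost harmless there, yet dominating $\absoluteval{\log p}^a\euler^{g}$ by $\absoluteval{\log p}^{a(1+1/\varepsilon)}$ asks for a strictly higher $p$-moment of $\log p$ than the hypothesis provides, and a plain H\"older split incurs the same inflation by a conjugate factor $s'>1$. The crux — the step I expect to be genuinely delicate — is to show that the \emph{openness} of the arc does double duty: beyond producing $\euler^{(1+\varepsilon)g}p\in L^1(\mu)$, it must also prevent $p$ from placing too much mass where $\absoluteval{\log p}$ is large while $g$ remains small, so that the $a$-th moment assumed in (1) already suffices and no higher moment is covertly needed. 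It is precisely here that $p\smile q$ cannot be relaxed to mere absolute continuity, and where the fixed-exponent formulation is most demanding; making this confinement quantitative, with a threshold $\Lambda$ chosen uniformly, is the heart of the argument.
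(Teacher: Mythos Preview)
Your argument for part~(2) is correct and is exactly the paper's proof: once $\log q - \log p \in \Lexp p = \Lexp q$, membership of either $\log p$ or $\log q$ in this common Orlicz space transfers immediately, because the space itself does not change under $p \smile q$.

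For part~(1) you have correctly isolated the obstruction, and you are right that it does not dissolve. The paper's proof does not address it either: it writes ``$\log q - \log p = u - K_p(u) \in \Lexp p = \Lexp q$, so that $\log q \in L^a(q)$ if, and only if, $\log p \in L^a(p)$'', which tacitly uses $L^a(p) = L^a(q)$---an identity that is \emph{not} among the consequences of $p \smile q$ in Theorem~\ref{prop:maxexp-pormanteau}. Your instinct that the change-of-measure step is ``genuinely delicate'' is sound, but not because a clever splitting is missing; the statement of part~(1) is false as written, so no completion of your outline is possible.

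Here is a counterexample for $a=1$. On $(0,1/e)$ with Lebesgue measure take
\[
p(x)=\frac{2}{x(\log(1/x))^{3}},\qquad q(x)=\frac{1}{x(\log(1/x))^{2}}.
\]
Then $\log(q/p)=\log\log(1/x)-\log 2$, and one checks directly that $\expectat p{\cosh(\alpha\log(q/p))}<\infty$ for $\alpha<2$ and $\expectat q{\cosh(\alpha\log(q/p))}<\infty$ for $\alpha<1$; hence $\log(q/p)\in\Lexp p\cap\Lexp q$ and $p\smile q$ by Theorem~\ref{prop:maxexp-pormanteau}(\ref{prop:maxexp-pormanteau-4}). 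Since $\absoluteval{\log p(x)}\sim\absoluteval{\log q(x)}\sim\log(1/x)$ as $x\to 0$, the substitution $u=\log(1/x)$ gives
\[
\int_0^{1/e}\absoluteval{\log p}\,p\,dx \ \sim \ \int_1^\infty \frac{du}{u^{2}}<\infty,
\qquad
\int_0^{1/e}\absoluteval{\log q}\,q\,dx \ \sim \ \int_1^\infty \frac{du}{u}=\infty,
\]
so $\log p\in L^1(p)$ while $\log q\notin L^1(q)$. In this example $\log p\notin\Lexp p$, so part~(2) is not contradicted; and since the paper's subsequent use of the proposition (the smoothness of the Boltzmann--Gibbs entropy) rests only on part~(2), nothing downstream is affected.
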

\begin{proof}
If $p, q$ belong to the same exponential manifold, we can write $q = \euler^{u - K_p(u)}\cdot p$  and, from Theorem \ref{prop:maxexp-pormanteau}.(\ref{prop:maxexp-pormanteau-4}), we obtain $\log q - \log p = u - K_p(u) \in \Lexp p = \Lexp q$, so that $\log q \in L^{a}(q)$ if, and only if, $\log p \in L^{a}(p)$, $a \ge 1$, and $\log q \in \Lexp q$ if, and only if, $\log p \in \Lexp p$.
\end{proof}

In order to obtain a smooth function, we study the BG-entropy $H(q)$ on all manifolds $\mathcal E$, such that for at least one, and, hence for all, $p \in \mathcal E$, it holds $\logof p \in \Lexp p$. In such an exponential manifold we can write

\begin{equation*} 
  \KL q p = - H(q) - \expectat q {\log p} \ ,
\end{equation*}
so that $H(q) \le - \expectat q {\log p}$.

For example, it is the case when the reference measure is finite and $p$ is constant. Another notable example is the Gaussian case, where the sample space is $\reals^n$ endowed with the Lebesgue measure and $p(x) \propto \exp{-\frac{|x|^2}{2}}$. In such  case $\int \cosh (\alpha |x|^2) \expof{-|x|^2/2} \ dx < +\infty$ if $0 < \alpha < 1/2$.

We investigate here the main properties of the BG-entropy in this context. First, one has
\begin{proposition}
The BG-entropy is a smooth real function on  the exponential manifold $\mathcal E$. Namely, if $p \in \mathcal E$ then 

$$H_{p}\::\:u \in \sdomain {p} \mapsto H\circ e_p(u)$$ 
is a $C^\infty$ real function. Moreover, its derivative in the direction $v$ equals

\begin{equation*}
  d H_p(u) [v]= - \covat q {u+\log p}v \ ,
\end{equation*}
where $q = e_p(u)$.
\end{proposition}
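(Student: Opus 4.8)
The plan is to reduce the BG-entropy to the cumulant generating functional $K_p$, whose regularity and derivatives are already in hand. First I would fix $p\in\mathcal E$ and $u\in\sdomain p$, set $q=e_p(u)=\euler^{u-K_p(u)}p$, and use $\log q=u-K_p(u)+\log p$ to write
\begin{equation*}
  H_p(u)=-\expectat q{\log q}=-\expectat q u+K_p(u)-\expectat q{\log p}.
\end{equation*}
Since $u\in\sdomain p\subset\eBspace p$, Proposition~\ref{pr:misc}.(\ref{item:firsttwo}) gives $\expectat q u=dK_p(u)[u]$. The role of the hypothesis defining this class of manifolds --- that $\log p\in\Lexp p$ --- is precisely to allow the splitting $\log p=c+\ell$ with $c=\expectat p{\log p}\in\reals$ (finite because $\Lexp p\hookrightarrow L^1(p)$, Theorem~\ref{theo:embed}) and $\ell=\log p-c\in\eBspace p$; then $\expectat q{\log p}=c+dK_p(u)[\ell]$, again by Proposition~\ref{pr:misc}.(\ref{item:firsttwo}). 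This yields the key identity
\begin{equation*}
  H_p(u)=K_p(u)-dK_p(u)[u+\ell]-c,
\end{equation*}
valid on all of $\sdomain p$ (equivalently $H_p(u)=-D_p(u,0)-\expectat q{\log p}$ with $D_p$ the Bregman divergence of Section~\ref{sec:kullb-leibl-diverg}).

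From this identity the smoothness is immediate: $K_p$ is $C^\infty$ on the open convex set $\sdomain p$ by Proposition~\ref{prop:cumulant}.(\ref{prop:cumulant3}), hence so is $u\mapsto dK_p(u)$ as a map into $(\eBspace p)^*$; composing with the bounded bilinear evaluation pairing and with the affine map $u\mapsto u+\ell$ shows $u\mapsto dK_p(u)[u+\ell]$ is $C^\infty$, and therefore $H_p$ is $C^\infty$ on $\sdomain p$, i.e.\ $H$ is smooth on $\mathcal E$. (On the open unit ball of $\eBspace p$ smoothness, indeed analyticity, would also follow from Proposition~\ref{prop:expisanalytic}, but the identity above covers the whole chart domain.)

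For the derivative I would differentiate $H_p(u)=K_p(u)-dK_p(u)[u+\ell]-c$ in a direction $v\in\eBspace p$. By the product rule, differentiating $u\mapsto dK_p(u)[u+\ell]$ produces $d^2K_p(u)[u+\ell,v]$ (from the factor $dK_p(u)$) plus $dK_p(u)[v]$ (from the argument $u+\ell$); the latter cancels the term $dK_p(u)[v]$ coming from $K_p(u)$, leaving
\begin{equation*}
  dH_p(u)[v]=-d^2K_p(u)[u+\ell,v]=-\covat q{u+\ell}{v}=-\covat q{u+\log p}{v},
\end{equation*}
where the middle equality is the second-derivative formula of Proposition~\ref{pr:misc}.(\ref{item:firsttwo}) and the last one holds because the covariance ignores the additive constant $-c$.

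The one point that requires care, and where the specific assumption on $\mathcal E$ is genuinely used, is the passage $\expectat q{\log p}=c+dK_p(u)[\ell]$ together with the consequent fact that $H_p$ is built solely from $K_p$ and $dK_p$: this is what upgrades smoothness from the unit ball to the full domain $\sdomain p$. Everything after that is the product rule and the derivative formulas for $K_p$ recorded in Propositions~\ref{prop:cumulant} and~\ref{pr:misc}, so I anticipate no further difficulty.
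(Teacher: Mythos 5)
Your proposal is correct and follows essentially the same route as the paper: both express $H_p(u)=K_p(u)-dK_p(u)[u+\log p+H(p)]+H(p)$ (your $\ell=\log p-c$ is exactly the paper's $\log p+H(p)\in\eBspace p$) and then differentiate using the first two derivative formulas for $K_p$. The only difference is presentational — you spell out why $\expectat p{\log p}$ is finite and why the constant drops out of the covariance, which the paper leaves implicit.
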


\begin{proof}
As

\begin{equation*}
  - \log q = - u + K_p(u) - \log p = - (u + \log p + H(p)) + K_p(u) + H(p) \in \Lexp p,
\end{equation*}
with $- (u + \log p + H(p)) \in B_p$, the representation of the BG-entropy in the chart centered at $p$ is

\begin{align*}
  H_p(u) &= H\circ e_p(u) \notag \\ &= - \expectat {e_p(u)}{u + \log p + H(p)}  + K_p(u) + H(p) \notag \\  &= - dK_p(u) \left[u + \log p + H(p))\right] + K_p(u) + H(p) \notag \\ &= (K_p(u)-d_uK_p(u)) - dK_p(u)[\log p + H(p)] + H(p),
\end{align*}
hence, $u \mapsto H_p(u)$ is a $C^\infty$ real function. Notice that $K_p(u)-d_uK_p(u) \le 0$, hence $H(q) \le \expectat q {\log p}$, as we already know.
The derivative of $H_p$ in the direction $v$ equals

\begin{equation*}
  d H_p(u) [v]= - d^2 K_p(u) [(u + \log p + H(p)),v] + d_v K_p(u) = - \covat q {u+\log p}v \ .
\end{equation*}
\end{proof}
Notice that, for $u = 0$ and $q=e_{p}(u),$ we have $\expectat q v = \expectat p v = 0$, hence

\begin{equation*}
  dH_p(0)[v] = - \covat p {\log p}v = - \expectat p {\logof p  v}\ . 
\end{equation*}
\begin{proposition}
The gradient field $\nabla H$ over $\mathcal E$ can be identified, at each $p$, with random variable $ \nabla H (p) \in \eBspace p \subset \mBspace q$,

\begin{equation*}
  \nabla H(p) = -(\logof p + H(p)) .
\end{equation*}
\end{proposition}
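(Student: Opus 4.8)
The plan is to obtain the gradient by reading it off the derivative formula for $H_p$ just established. Recall the convention used for the gradient of a function on $\maxexp M$: the covariant derivative $D_v H(p)$ is the differential of $H$ written in the chart centered at the base point $p$ and evaluated there, $D_v H(p) = dH_p(0)[v]$ for $v \in \eBspace p$, and $\nabla H(p) \in \mBspace p$ is the random variable representing this bounded linear functional via $D_v H(p) = \expectat p {\nabla H(p)\, v}$. Hence it suffices to compute $dH_p(0)[v]$ for $v \in \eBspace p$ and identify the representing random variable.

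Applying the previous proposition with $u = 0$, so that $q = e_p(0) = p$, gives $dH_p(0)[v] = -\covat p {\log p} v$. Since $\expectat p {\log p} = -H(p)$, the centered version of $\log p$ is exactly $\log p + H(p)$, so that $\covat p {\log p} v = \expectat p {(\log p + H(p))\, v}$ and therefore $dH_p(0)[v] = \expectat p {-(\log p + H(p))\, v}$ for every $v \in \eBspace p$. Comparing with the defining relation $D_v H(p) = \expectat p {\nabla H(p)\, v}$ yields $\nabla H(p) = -(\log p + H(p))$.

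It remains to verify the membership claims and the legitimacy of the identification. Under the standing assumption $\log p \in \Lexp p$ one has $-(\log p + H(p)) \in \Lexp p$, and it has zero $p$-mean because $\expectat p {\log p} = -H(p)$; hence $-(\log p + H(p)) \in L^{\cosh-1}_0(p) = \eBspace p$. By the continuous embedding $\Lexp p \hookrightarrow \LlogL p$ of Theorem \ref{theo:embed}, together with $\eBspace p = \eBspace q$ for $q \smile p$ from the Portmanteau theorem, this random variable also lies in $\mBspace q$ for every $q \in \mathcal E$, which is the fiber where a gradient is required to live. Continuity of $dH_p(0)$ on $\eBspace p$, which makes the representation meaningful, follows from Hölder's inequality $\absoluteval{\expectat p {v \log p}} \le 2\normat {\cosh-1,p} v\, \normat {(\cosh-1)_*,p}{\log p}$ together with $\log p \in \Lexp p \subset \LlogL p$. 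There is no genuine obstacle here: the proposition is a direct corollary of the derivative formula for $H_p$, the only care needed being the bookkeeping of the Orlicz spaces involved and the convention identifying a gradient with the mixture-space representative of a differential.
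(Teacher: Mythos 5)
Your proof is correct and follows the paper's own route: both read the gradient off the derivative formula $dH_p(0)[v]=-\covat p{\logof p}{v}$ from the preceding proposition and identify the representing random variable $-(\logof p + H(p))$ through the duality pairing, with you merely adding the Orlicz-space bookkeeping and continuity check that the paper leaves implicit. One small slip in that bookkeeping: the Portmanteau theorem gives $\Lexp p = \Lexp q$ as vector spaces, not $\eBspace p = \eBspace q$ (the centered subspaces are taken with respect to different measures), but this does not affect the main computation.
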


\begin{proof}
The covariant derivative $D_GH$ at $p \in \mathcal E$ with respect to the vector field $G $ defined on $\mathcal{E}$ with $G(p) \in B_p$ and $p \in \mathcal E$ is

\begin{equation*}
D_G H(p) = - \expectat p {\logof p  G(p)} = - \scalarat p {\logof p + H(p)}{G(p)} \ .
\end{equation*}

The gradient field $\nabla H$ over $\mathcal E$, is then defined by $D_GH(p) = \scalarat p {\nabla H(p)}{G(p)}$. This justifies the identification with the random variable $ \nabla H (p)=-(\logof p + H(p)) \in \eBspace p \subset \mBspace q$.\end{proof}

\begin{remark} The equation $\nabla H(p) = 0$ implies $ \log p = - H(p)$, hence $p$ has to be constant and this requires it is the finite reference measure $\mu$.\end{remark}

We refer to \cite{MR3130268} for more details on the BG-entropy and in particular on the evolution of $H$ on $C^{1}$ curve in $\Maxexp$
of the type $I \ni t \mapsto f_t$. 
 
\section{Boltzmann equation}\label{sec:boltzmann-1} 

We consider a space-homogeneous Boltzmann operator as it is defined, for example, in \cite{MR1942465} and \cite{MR2409050}. We retell the basic story in order to introduce our notations and the IG background. Orlicz spaces as a setting for Boltzmann's equation have been recently proposed by \cite{MR3205750}, while the use of exponential statistical manifolds has been suggested in \cite[Example 11]{MR3126029} and sketched in \cite[Sec 4.4]{MR3130268}. We start with an improvement of the latter, a few repetitions being justified by consistency between this presentation and \cite[1.3, 4.5--6]{MR2409050}, compare also Prop. \ref{prop:conditioning} below.

\subsection{Collision kinematics}

We review our notations, see our Fig. \ref{fig:1}, cf. \cite[Fig. 1]{MR2409050}.
\begin{figure}
\centering
\includegraphics[scale=.45,viewport = 68 73 462 442]{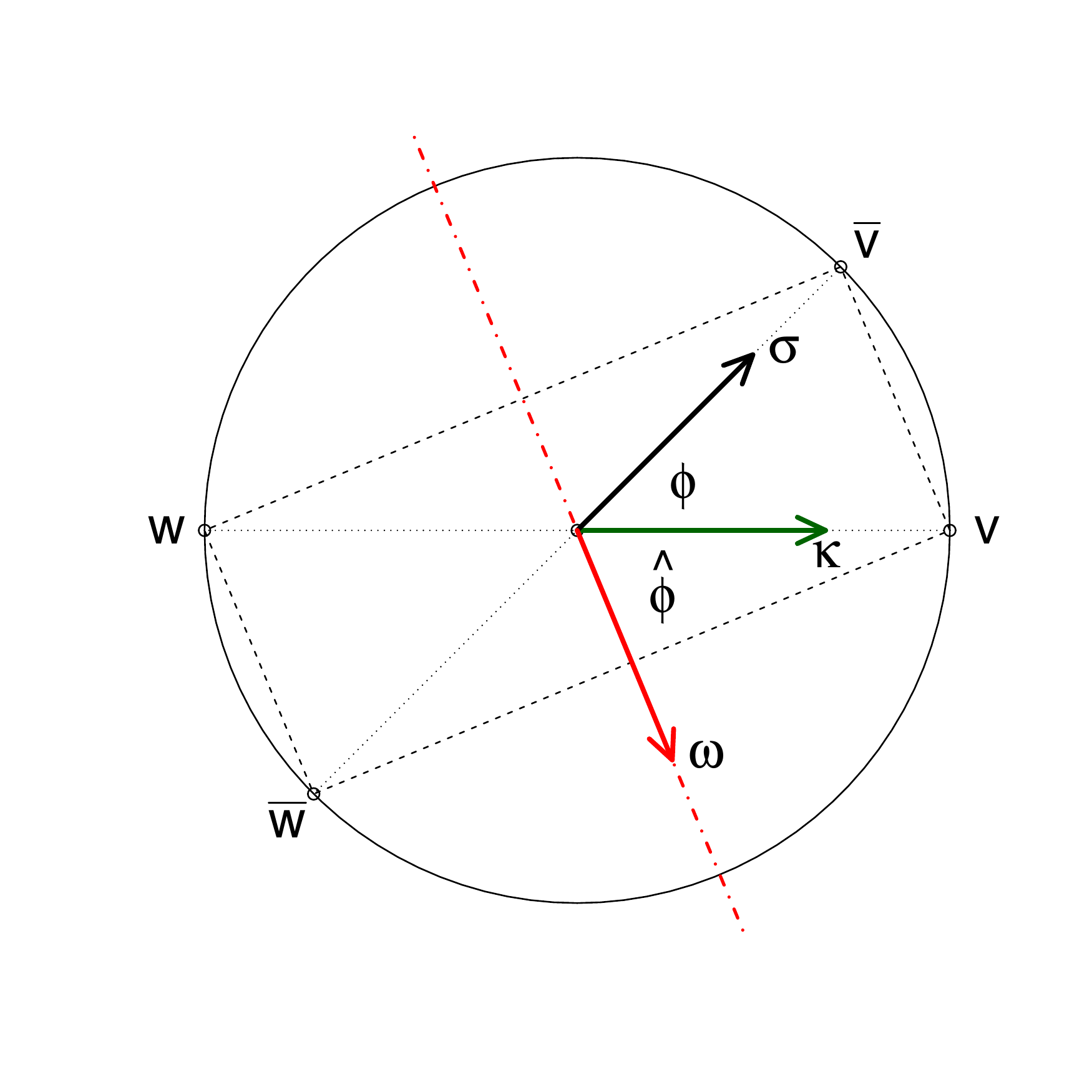} \qquad
\includegraphics[scale=.45,viewport = 59 86 474 432]{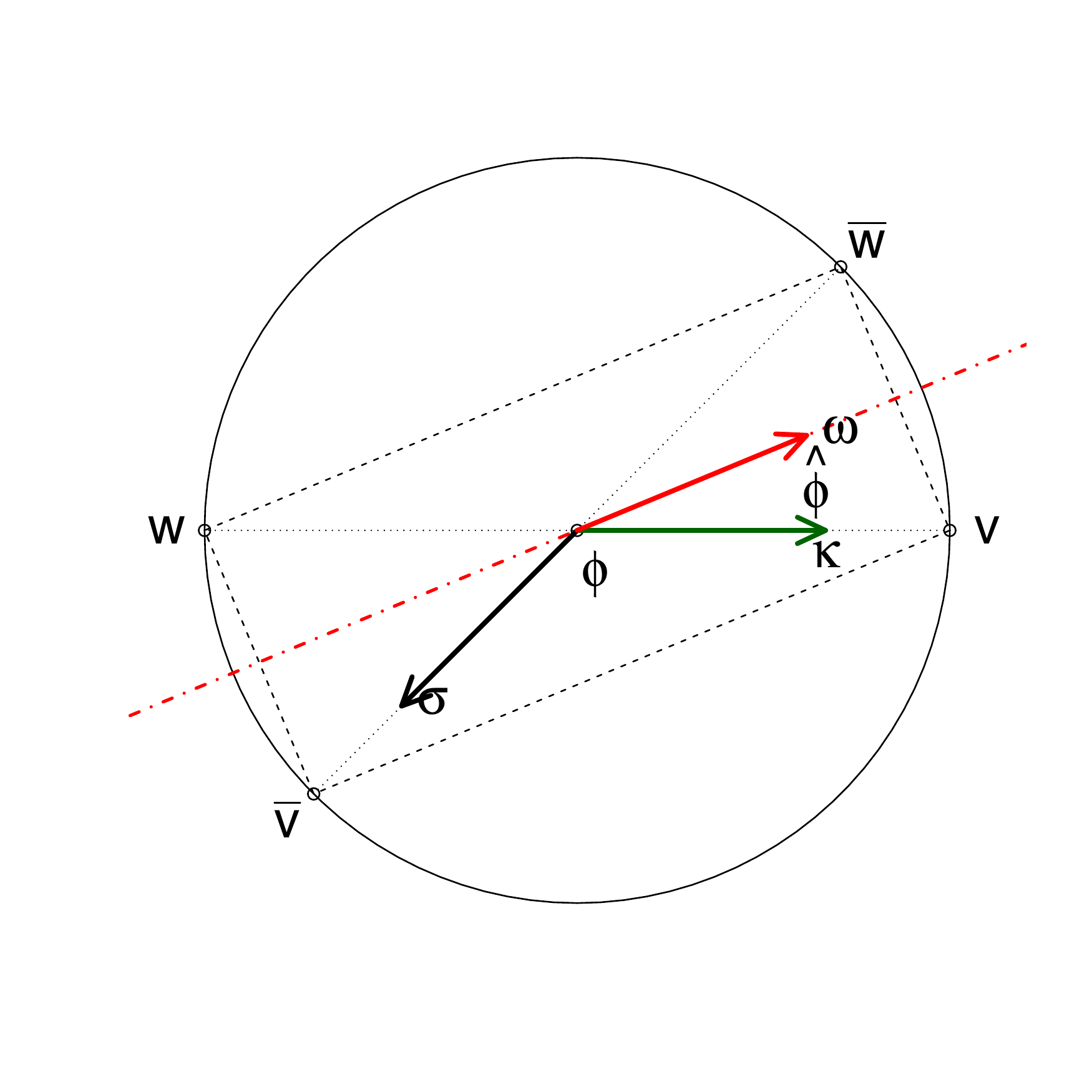}
\caption{Elastic collision for two opposite $\sigma$'s corresponding to the two labeling of the velocities after collision. Velocities $v$ and $w$ are before collision, $\widebar v$ and $\widebar w$, after collision. The unit vectors are $\kappa  = \versof{v-w}$, $\sigma = \versof{\bar v - \widebar w}$, $\omega = \versof{v - \widebar v} = \versof{\kappa - \sigma}$. The red dotted line represents the space generated by $\pm \omega$.The angles are  given by $\cos \phi = \kappa \cdot \omega$, $\phi \in [0,\pi]$. As $\omega \cdot (\sigma + \kappa) = 0$, the angles are related by $\hat\phi = (\pi - \phi)/2$.}
\label{fig:1}  
\end{figure}
We denote by $v, w \in \reals^3$ the velocities before collision, while the velocities after collision are denoted by $\widebar v, \widebar w \in \reals^3$. The quadruple $(v,w,\widebar v, \widebar w) \in (\reals^{3})^4$, is assumed to satisfy the \emph{conservation laws} 

\begin{align}
 F_1(v,w,\widebar v, \widebar w) &= v + w - (\widebar v + \widebar w) = 0, \label{eq:momentum}\\ 
 F_2(v,w,\widebar v, \widebar w) &= \absoluteval {v}^2 +   \absoluteval {w}^2 - (\absoluteval {\widebar v}^2 + \absoluteval {\widebar w}^2) = 0 ,  \label{eq:energy} 
\end{align}
which define an algebraic variety $\mathcal M$ that we expect to have dimension $12-(3+1)=8$. The Jacobian matrix of the four defining Eq.s \eqref{eq:momentum} and \eqref{eq:energy} is

\begin{equation}\label{eq:jacobianM}
\bordermatrix[{[}{]}]{%
  & v_1 & v_2 & v_3 & w_1 & w_2 & w_3 & \widebar v_1 & \widebar v_2 & \widebar v_3 & \widebar w_1 & \widebar w_2 & \widebar w_3 \cr
\eqref{eq:momentum}_1 & 1   & 0   & 0  & 1   & 0   & 0   & -1   & 0   & 0  & -1   & 0   & 0 \cr 
\eqref{eq:momentum}_2 & 0   & 1   & 0  & 0   & 1   & 0   & 0   & -1   & 0  & 0   & -1   & 0 \cr
\eqref{eq:momentum}_3 & 0   & 0   & 1  & 0   & 0   & 1   & 0   & 0   & -1  & 0   & 0   & -1  \cr
\eqref{eq:energy} & 2v_1 & 2v_2 & 2v_3 & 2w_1 & 2w_2 & 2w_3 & -2\widebar v_1 & -2\widebar v_2 & -2\widebar v_3 & -2\widebar w_1 & -2\widebar w_2 & -2\widebar w_3 \cr}  
\end{equation}
The Jacobian matrix in Eq. \eqref{eq:jacobianM} has in general position full rank, and rank 3 if $v=w=\bar v=\bar w$. We denote by $\mathcal M_*$ the 8 dimensional manifold $\mathcal M \setminus \set{v=w=\widebar v = \widebar w}$. In the sequel, for $v \neq w$, we set 

$$\versof{v-w}=\dfrac{v-w}{|v-w|}.$$

From \eqref{eq:momentum} and \eqref{eq:energy} it follows the conservation of both the scalar product, $v \cdot w = \bar v \cdot \bar w$ and of the norm of the difference, $\absoluteval{v-w} = \absoluteval{\widebar v - \widebar w}$, so that all the vectors of the quadruple lie on a circle with center $z = (v+w)/2 = (\bar v + \bar w) / 2$ and are the four vertexes of a rectangle. If $v = w$ then $\widebar v = \widebar w$, and also $v=w=\widebar v = \widebar w$ as the circle collapse to one point, hence we have $\mathcal M_* = \mathcal M \setminus \set{v=w} = \mathcal M \setminus \set{\widebar v= \widebar w}$. 

There are various explicit and interesting parametrizations of $\mathcal M_*$ available. 

An elementary parametrization consists of any algebraic solution of Eq.s \eqref{eq:momentum} and \eqref{eq:energy} with respect to any of the free 8 coordinates. Other parametrizations are used in the literature, see classical references on the Boltzmann equation, e.g. \cite{MR1942465}. 

A \emph{first} parametrization is

\begin{equation} \label{eq:sigma-par}
   (u,v,\sigma) \in (\reals^3 \times \reals^3)_* \times \Sd\mapsto \left(u,v,\hat A_{\sigma}(u,v)\right) \in \mathcal M_* \subset (\reals^3)^4,
\end{equation}
where $\Sd = \setof{\sigma \in \reals^3}{\,\absoluteval{\sigma}=1}$ and the \emph{collision transformation} $\hat A_\sigma \colon (v, 
w) \mapsto ( v,  w) = (v_\sigma,w_\sigma)$ is:

\begin{equation} \label{eq:bar-A-y-matrix}
\hat A_\sigma \colon \left\{\begin{aligned}
  v_\sigma &= \frac{v+w}2 + \frac{\absoluteval{v-w}}2 \sigma \\
  w_\sigma &= \frac{v+w}2 - \frac{\absoluteval{v-w}}2 \sigma
\end{aligned}\right..
\end{equation}
Viceversa, on $\mathcal M_*$ the collision transformation depends on the unit vector $\sigma = \versof{  v -   w} \in \Sd$, while the other terms depend on the collision invariants, as $\absoluteval{v-w}^2 = 2(\absoluteval{v}^2+\absoluteval{w}^2)-\absoluteval{v+w}^2$. In conclusion, the transformation in Eq. \eqref{eq:sigma-par} is 1-to-1 from $(\reals^3\times \reals^3)_* \times \Sd$ to $\mathcal M_*$, where $(\reals^3\times \reals^3)_* = \setof{(u,v)\in \reals^3\times\reals^3}{v\ne w}$.

A \emph{second} parametrization of $\mathcal M_*$ is obtained  using the common space of two parallel sides of the velocity's rectangle, $\spanof{v - \widebar v} = \spanof{\widebar w - w}$, so that $v - \widebar v = \widebar w - w = \Pi(v - w)$, where $\Pi$ is the orthogonal projection on the subspace. Viceversa, given any $\Pi$ in the set $\Pi(1)$ of projections of rank 1, the mapping

\begin{equation}\label{eq:A-x-matrix}
  A_\Pi =
  \begin{bmatrix}
    (I - \Pi) & \Pi \\ \Pi & (I - \Pi)
  \end{bmatrix},
\quad
\left\{\begin{aligned}
    v_\Pi &= v - \Pi(v - w) = (I - \Pi)v + \Pi w  \\
    w_\Pi &= w + \Pi (v - w) = \Pi v + (I - \Pi)w
  \end{aligned}\right..
\end{equation}
The components in the direction of the image of $\Pi$ are exchanged, $\Pi v_\Pi = \Pi w$ and $\Pi w_\Pi = \Pi v$, while the orthogonal components are conserved. If $\omega$ is any of the two unit vectors such that $\Pi = \omega \otimes\omega'$, the matrix $A_{\Pi}=A_{\omega\otimes\omega'}$ does not depend on the direction of $\omega \in \Sd$. Notice that $A_\Pi' = A_\Pi$ and $A_\Pi A_\Pi' = I_6$, that is $A_\Pi$ is an orthogonal symmetric matrix. This parametrization uses the set $\Pi(1)$ of projection matrices of rank 1,

\begin{equation} \label{eq:omega-par}
  (\reals^3 \times \reals^3)_* \times \Pi(1) \ni (u,v,\Pi) \mapsto \left(u,v, A_{\Pi}(u,v)\right) \in \mathcal M_* \in (\reals^{3})^4 \ .
\end{equation}

The $\sigma$-parametrization \eqref{eq:sigma-par} and the $\Pi$-parametrization \eqref{eq:omega-par} are related as follows. Given the unit vector $\kappa = \versof{v-w}$, the parameters $\Pi$ and $\sigma$ in Eq.s \eqref{eq:A-x-matrix} and \eqref{eq:bar-A-y-matrix} are in 1-to-1 relation as

\begin{equation}\label{eq:sigma.vs.Pi}
\sigma = (I - 2\Pi)\kappa,  \quad \Pi = \versof{\kappa - \sigma}\otimes\versof{\kappa - \sigma}.  
\end{equation}

The transition map from the parametrization \eqref{eq:sigma-par} to the parametrization \eqref{eq:omega-par} is

\begin{equation*}
(v,w,\sigma) \mapsto (v,w,(\versof{\versof{v-w}-\sigma})\otimes(\versof{\versof{v-w}-\sigma}),  
\end{equation*}
while the inverse transition is

\begin{equation*}
  (v,w,\Pi) \mapsto (v,w,(I-2\Pi)\versof{v-w}).
\end{equation*}

\subsection{Uniform probabilities on $\Sd$ and on $\Pi(1)$}

Let $\mu$ be the uniform probability on $\Sd$, computed, for example, in polar coordinates by

\begin{equation} \label{eq:mu-integral}
  \integrald {\Sd} {f(\sigma)} {\mu(d\sigma)} = \frac1{4\pi} \int_0^{\pi} \sin\varphi\ d\varphi \int_0^{2\pi}  \ f(\sin\varphi \cos\theta\ u_1 + \sin\varphi\sin\theta\ u_2 + \cos\varphi\ u_3)d\theta,
\end{equation}
where $u_1,u_2,u_3$ is any orthonormal basis of $\reals^3$ that is $U = [u_1 u_2 u_3] \in \operatorname{SO}(3)$. In such a way, $U \colon \Sd \mapsto \Sd$  and the right hand side of Eq. \eqref{eq:mu-integral} does not depend on $U$.

As the mapping $\omega \in \Sd \mapsto \Pi = \omega\otimes\omega  \in \Pi(1)$ is a 2-covering, we define the image $\nu$ of $\mu$ by the equation

\begin{equation}\label{eq:nu-integral}
  \int_{\Pi(1)} g(\Pi)\ \nu(d\Pi) = \int_{\Sd} g(\omega\otimes\omega)\ \mu(d\omega) = 2 \int_{\setof{\sigma \in \Sd}{\kappa\cdot\sigma > 0}} g(\omega\otimes\omega)\ \mu(d\omega),  
\end{equation}
where $\kappa \in \Sd$ is any unit vector used to split $\Sd$ in two parts, $\setof{\sigma}{\kappa\cdot\sigma > 0}$ and $\setof{\sigma}{\kappa\cdot\sigma < 0}$.
Eq. \eqref{eq:nu-integral} defines a probability $\nu$ on $\Pi(1)$ such that we have the invariance

\begin{equation*}
  \int _{\Pi(1)} g(U{\Pi}U')\ \nu(d\Pi) = \int _{\Pi(1)} g(\Pi)\ d\nu(\Pi), \quad  U \in \operatorname{SO}(3).
\end{equation*}

Let us compute the image $T^*_\kappa \mu$ of the uniform measure $\mu$ under the action of the transformation $T_\kappa \colon \sigma \mapsto \omega = \versof{\kappa-\sigma}$. If in Eq. \eqref{eq:mu-integral} we take $u_3 = \kappa$, that is an orthonormal basis $(u_1, u_2, \kappa) $, then, for $\phi,\theta$ such that $\sigma = \sin\phi \cos\theta\ u_1 + \sin\phi\sin\theta\ u_2 + \cos\phi\ \kappa$ and $\hat\phi = (\pi - \phi)/2$ (see Fig. \ref{fig:1}) we have

\begin{align}
  \omega = \versof{\kappa - \sigma} &= \sin\hat\phi \cos\theta\ u_1 + \sin\hat\phi\sin\theta\ u_2 + \cos\hat\phi\ \kappa \notag \\ 
&= \sinof{\frac{\pi-\phi}2} \cos\theta\ u_1 + \sinof{\frac{\pi-\phi}2} \sin\theta\ u_2 + \cosof{\frac{\pi-\phi}2}\ \kappa \notag \\ 
&= \cosof{\frac{\phi}2} \cos\theta\ u_1 + \cosof{\frac{\phi}2} \sin\theta\ u_2 + \sinof{\frac{\phi}2}\ \kappa 
\end{align}
and for all integrable $f \colon \reals^3 \to \reals$ one has

\begin{align}
\integrald {\Sd} {f(\omega)} {T^*_\kappa(d\omega)} &=
  \integrald {\Sd} {f\left(\versof{\kappa - \sigma}\right)} {\mu(d\sigma)} \notag \\ &= 
\frac1{4\pi} \int_0^{\pi} \sin\phi\ d\phi \int_0^{2\pi} d\theta\ f\left(\cosof{\frac{\phi}2} \cos\theta\ u_1 + \cosof{\frac{\phi}2} \sin\theta\ u_2 + \sinof{\frac{\phi}2}\ \kappa\right) \notag \\
&= \frac1{4\pi} \int_0^{\pi/2} 2 \sin(2\phi)\ d\phi \int_0^{2\pi} d\theta\ f\left(\cos\phi \cos\theta\ u_1 + \cos\phi \sin\theta\ u_2 + \sin\phi\ \kappa\right) \notag \\
&= \frac1{4\pi} \int_0^{\pi/2} 4 \cos \phi \sin(\phi)\ d\phi \int_0^{2\pi} d\theta\ f\left(\cos\phi \cos\theta\ u_1 + \cos\phi \sin\theta\ u_2 + \sin\phi\ \kappa\right) \notag \\ 
&= \integrald {\setof{\sigma\in \Sd}{\kappa\cdot\omega \ge 0}} {f(\omega)} {4(\kappa\cdot\omega)\ \mu(d\omega)}, \label{eq:sigma-to-omega}
\end{align}
compare \cite[4.5]{MR2409050}. 

In particular, for a symmetric function, $f(\omega)=f(-\omega)$, we have

\begin{equation} \label{eq:sigma-to-omega-sym}
  \integrald {\Sd} {f(\omega)} {T^*_\kappa(d\omega)} = \integrald {\Sd} {f(\versof{\kappa-\sigma})} {\mu(d\sigma)} = \integrald {\Sd} {f(\omega)} {2\absoluteval{\kappa\cdot\omega}\mu(d\omega)}. 
\end{equation}

It follows, for each integrable $g \colon \Pi(1) \to \reals$, that

\begin{equation} \label{eq:integral-sigma-to-Pi}
  \int_{\Sd} g((\versof{\kappa-\sigma})\otimes(\versof{\kappa-\sigma}))\ \mu(d\sigma) = \int_{\Sd} g(\omega\otimes\omega)\ 2 \absoluteval{\kappa\cdot\omega} \ \mu(d\omega)
\end{equation}
Notice that if we integrate Eq. \eqref{eq:sigma-to-omega} with respect to $\kappa$ we obtain

\begin{equation} \label{eq:kappa.minus.sigma}
  \iint_{\Sd\times \Sd} f(\versof{\kappa-\sigma})\ \mu(d\sigma)\mu(d\kappa) = \integrald {\Sd} {f(\sigma)} {\mu(d\sigma)},
\end{equation}
because

\begin{equation}
  \integrald {\setof{\kappa\in \Sd}{ \kappa\cdot \sigma \geq 0}} {\kappa \cdot \sigma  } {\mu(d\kappa)} 
  =\frac14.
\end{equation}

\subsection{Conditioning on the collision invariants}

Given a function $g \colon \reals^3 \times \reals^3$, Eq. \eqref{eq:bar-A-y-matrix} shows that the function 

\begin{equation}
  \hat g \colon (u,v) \mapsto \integrald {\Sd} {g(v_\sigma,w_\sigma)} {\mu(d\sigma)} = \integrald {\Sd} {g\left(\hat A_\sigma(v,w)\right)}{\mu(d\sigma)} 
\end{equation}
depends on the collision invariants only. This, in turn, implies that $\hat g$ is the conditional expectation of $g$ with respect of the collision invariants under any probability distribution on $\reals^3\times\reals^3$ such that the collision invariants and the unit vector of $\sigma$ are independent, the unit vector $\sigma$ being uniformly distributed. See below a more precise statement in the case of the Gaussian distribution. 

 On the sample space $(\reals^3,dv)$, let $M$ be the standard normal density defined in \eqref{Maxw} (the Maxwell density). As, for all $\Pi \in \Pi(1)$, $A_\Pi A'_\Pi = A_\Pi A_\Pi = I_6$, in particular $\absoluteval{\det A_\Pi} = 1$, we have for each $(V,W) \sim M \otimes M$ (i.e. $V,W$ are i.i.d. with distribution $M$) that

\begin{equation}
  A_\Pi (V,W) = (V_\omega,W_\omega) \sim (V,W).
\end{equation}
Under the same distributions, the random variables $\frac{V+W}{\sqrt{2}}$, $\frac{\absoluteval{V-W}}{\sqrt{2}}$, $\versof{V-W}$, are independent, with distributions given by

\begin{equation}
\frac{V+W}{\sqrt{2}} \sim \text{N}\left(0_3,I_3\right), \quad \frac{\absoluteval{V-W}^2}2 \sim \chi^2(3), \quad \versof{V-W} \sim \mu,
\end{equation}
respectively. Hence, given any $S \sim \mu$ such that$\frac{V+W}{\sqrt{2}}$, $\frac{\absoluteval{V-W}}{\sqrt{2}}$, $S$, are independent, we get

 \begin{equation}
   \hat A_S(V,W) \sim (V,W) \ .
 \end{equation}
This equality of distribution generalizes the equality of random variables $\hat A_{\versof{V-W}}(V,W) = (V,W)$. We state the results obtained above as follows.

The image distribution of $M \otimes M \otimes U$ induced on $(\reals^3)^4$ by the parametrization in Eq. \eqref{eq:sigma-par} is supported by the manifold $\mathcal M_*$. Such a distribution has the property that the projections on both the first two and the last two components are $M \otimes M$. The joint distribution is not Gaussian; in fact the support $\mathcal M_*$ is not a linear subspace. We will call this distribution the \emph{normal collision distribution}.

The second parametrization in Eq. \eqref{eq:A-x-matrix} shows that the variety $\mathcal M$ contains the bundle of linear spaces 

\begin{equation*}
  (v,w) \mapsto (v,w,(I-\Pi)v + \Pi w, \Pi v + (I-\Pi) w) , \quad \pi \in \Pi(1) \ .
\end{equation*}

The distribution of

\begin{equation} \label{eq:Pi.distribution}
  \mathcal M \ni (v,w,\bar v, \bar w) \mapsto \Pi = \versof{v - \bar v} \otimes \versof{v - \bar v} \in \Pi(1)
\end{equation}
under the normal collision distribution is obtained from Eq. \eqref{eq:sigma.vs.Pi}. In fact $\Pi$ is the projector on the subspace generated by $\kappa - \sigma$ where $(v,w,\sigma) \mapsto \kappa = \versof{v-w}$ is uniformly distributed and independent from $\sigma$. Hence, Eq. \eqref{eq:kappa.minus.sigma} shows that $(v,w,\sigma) \mapsto \versof{\kappa-\sigma}$ is uniformly distributed on $\Sd$ so that the distribution in Eq. \eqref{eq:Pi.distribution} is the $\nu$ measure defined in Eq. \eqref{eq:nu-integral}. Conditionally to $\Pi$, the normal collision distribution is Gaussian with covariance

\begin{equation*}
  \begin{bmatrix}
    I & 0 & I - \Pi & \Pi \\
    0 & I & \Pi & I - \Pi \\
    I - \Pi & \Pi & I & 0 \\
    \Pi & I - \Pi & 0 & I
  \end{bmatrix} \ .
\end{equation*}
We can give the previous remarks a more probabilistic form as follows.
\begin{proposition}[Conditioning] \label{prop:conditioning}
Let $M$ be the density of the standard normal N$(0_3,I_3)$ 	and $g\colon \reals^{3} \times \reals^{3} \to \reals$ be an integrable function. It holds the following 
\begin{enumerate}
\item \label{prop:conditioning0}
If $(V,W) \sim M \otimes M$ and $\Pi \in \Pi(1)$, then 

\begin{equation*} 
\condexp {g(V,W)}{V+W,\collinv V W} = \condexp {g\left(A_\Pi(V,W)\right)}{V+W,\collinv V W}
\end{equation*}
\item \label{prop:conditioning1}
If $(V,W) \sim M \otimes M$, then 

\begin{equation*} 
  \integrald{\Sd}{g\left(\hat A_\sigma(V,W)\right)}{\mu(d\sigma)} = \condexp{g(V,W)}{V+W, \absoluteval{V}^2+\absoluteval{W}^2}.
\end{equation*}
\item \label{prop:conditioning1b} If $(V,W) \sim M \otimes M$, then 

\begin{equation*} 
  2 \integrald{\Sd}{g\left(A_{\Pi}(V,W)\right) \absoluteval{\Pi \left(\versof{V-W}\right)}} {\nu(d\Pi)} = \condexp{g(V,W)}{V+W, \absoluteval{V}^2+\absoluteval{W}^2}.
\end{equation*}
\item \label{prop:conditioning2} Assume $(V,W) \sim F$, $F \in \maxexp {M \otimes M}$, with $F(v,w) = f(v,w)M(v)M(w)$. Then

  \begin{equation*}
    \left(\integrald{\Sd}{f \circ \hat A_\sigma}{\mu(d\sigma)}\right) \cdot M \otimes M \in \maxexp {M \otimes M} \ ,
  \end{equation*}
and  

\begin{multline*} 
  \condexpat {f\cdot M \otimes M} {g(V,W)}{V+W,\absoluteval{V}^2+\absoluteval{W}^2} = \\ \frac{\integrald{\Sd}{g\left(\hat A_\sigma (V,W)\right)f\left( \hat A_\sigma (V,W)\right)}{\mu(d\sigma)}}{\integrald{\Sd}{f\left(\hat A_\sigma (V,W)\right)}{\mu(d\sigma)}} = \\ \frac{\integrald{\Sd}{g\left(A_\Pi (V,W)\right)f\left( A_\Pi (V,W)\right) \absoluteval{\Pi \left(\versof{V-W}\right)}} {\nu(d\Pi)}}{\integrald{\Sd}{f\left(A_\Pi (V,W)\right) \absoluteval{\Pi \left(\versof{V-W}\right)}} {\nu(d\Pi)}} \ .
\end{multline*}
\end{enumerate}
\end{proposition}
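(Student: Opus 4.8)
The plan is to establish the four items as a graded sequence, since each builds on the collision kinematics and the measure computations already in place, with all hard probability reduced to the single structural fact that under $M\otimes M$ the collision invariants $(V+W,\,\tfrac12|V-W|^2)$ and the direction $\versof{V-W}$ are independent with $\versof{V-W}\sim\mu$. For item~(\ref{prop:conditioning0}), I would first observe that $A_\Pi$ fixes $v+w$ and $|v|^2+|w|^2$ (this is exactly Eq.~\eqref{eq:momentum}--\eqref{eq:energy}, since the parametrization lands in $\mathcal M_*$), so $g\circ A_\Pi$ and $g$ have the \emph{same} conditional expectation provided $A_\Pi(V,W)\sim(V,W)$ conditionally on the invariants. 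But $|\det A_\Pi|=1$ and $A_\Pi$ is orthogonal, so $A_\Pi(V,W)\sim(V,W)$ globally; intersecting with the $\sigma$-algebra generated by the invariants, which $A_\Pi$ leaves pointwise fixed, gives equality of the two regular conditional distributions, hence of the conditional expectations of $g$. One must be a little careful that $\Pi$ is treated as a \emph{fixed} (deterministic) element of $\Pi(1)$ here, not the random projector of Eq.~\eqref{eq:Pi.distribution}; with that reading the argument is just invariance of $M\otimes M$ under the orthogonal map $A_\Pi$ together with $A_\Pi$ being identity on the conditioning variables.

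For item~(\ref{prop:conditioning1}) I would use the $\sigma$-parametrization: conditionally on the invariants, $(V,W)$ is the unique rectangle vertex with side-direction $\versof{V-W}$, and the map $\sigma\mapsto \hat A_\sigma(V,W)$ ranges over all rectangles with the \emph{same} center and same $|v-w|$; since $\versof{V-W}$ is independent of the invariants and uniform on $\Sd$, averaging $g\circ\hat A_\sigma$ against $\mu(d\sigma)$ reproduces exactly the conditional average of $g$ over the fibre. Concretely: by the disintegration of $M\otimes M$ along the invariants, $\condexp{g(V,W)}{V+W,|V|^2+|W|^2}$ is $\int_{\Sd} g(\hat A_\sigma(\cdot))\,\mu(d\sigma)$ evaluated at $(V,W)$, because $\hat A_{\versof{V-W}}(V,W)=(V,W)$ and replacing $\versof{V-W}$ by an independent uniform $\sigma$ does not change the law given the invariants. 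This is the place where I expect the only real subtlety: making the fibrewise statement precise, i.e. checking that the conditional law of $(V,W)$ given the invariants is the pushforward of $\mu$ under $\sigma\mapsto\hat A_\sigma$ applied at a representative point, which is where one invokes the independence structure displayed just before the proposition rather than proving it afresh.

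Item~(\ref{prop:conditioning1b}) then follows from item~(\ref{prop:conditioning1}) purely by the change of variables $\sigma\mapsto\omega=\versof{\kappa-\sigma}$ with $\kappa=\versof{V-W}$: Eq.~\eqref{eq:sigma-to-omega} (or its symmetric form \eqref{eq:sigma-to-omega-sym}, and \eqref{eq:integral-sigma-to-Pi} at the level of $\Pi(1)$) converts $\int_{\Sd} g(\hat A_\sigma(V,W))\,\mu(d\sigma)$ into $\int_{\Sd} g(A_\Pi(V,W))\,2|\kappa\cdot\omega|\,\mu(d\omega)$ with $\Pi=\omega\otimes\omega$, and the definition \eqref{eq:nu-integral} of $\nu$ as the pushforward of $\mu$ under $\omega\mapsto\omega\otimes\omega$ (a $2$-covering) turns the $2|\kappa\cdot\omega|$ weight into $2|\Pi(\versof{V-W})|\,\nu(d\Pi)$, since $|\Pi\kappa|=|\kappa\cdot\omega|$ when $\Pi=\omega\otimes\omega$ and $|\kappa|=1$. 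No new probability is needed. Note here that $g\circ A_\Pi = g\circ\hat A_\sigma$ only after matching $\sigma$ and $\Pi$ via \eqref{eq:sigma.vs.Pi}, so the substitution must be done with that pairing; the Jacobian weight is exactly what \eqref{eq:sigma-to-omega} supplies.

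For item~(\ref{prop:conditioning2}), write $F=f\cdot M\otimes M$ with $f\in\sdomain{M\otimes M}$ in a chart, i.e. $f=\euler^{\,u-K_{M\otimes M}(u)}$. The first claim — that $\bigl(\int_{\Sd} f\circ\hat A_\sigma\,\mu(d\sigma)\bigr)\cdot M\otimes M\in\maxexp{M\otimes M}$ — I would prove by the Portmanteau theorem (Theorem~\ref{prop:maxexp-pormanteau}), checking item~(\ref{prop:maxexp-pormanteau-6}): it suffices that $\int_{\Sd} f\circ\hat A_\sigma\,\mu(d\sigma)$ and its reciprocal lie in $L^{1+\varepsilon}(M\otimes M)$ for some $\varepsilon>0$. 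The numerator is controlled by Jensen's inequality ($\|\int f\circ\hat A_\sigma\,\mu(d\sigma)\|_{1+\varepsilon}\le$ an average of $\|f\circ\hat A_\sigma\|_{1+\varepsilon}$), and by item~(\ref{prop:conditioning1}) applied to $g=f^{1+\varepsilon}$ together with $A_\sigma$-invariance of $M\otimes M$ each $\|f\circ\hat A_\sigma\|_{1+\varepsilon}=\|f\|_{1+\varepsilon}<\infty$ since $f=q/(M\otimes M)$ for $q=F\in\maxexp{M\otimes M}$ (use Theorem~\ref{prop:maxexp-pormanteau}.(\ref{prop:maxexp-pormanteau-6}) for $F$); the reciprocal is handled symmetrically using Jensen in the form $\bigl(\int f\circ\hat A_\sigma\,\mu(d\sigma)\bigr)^{-1}\le\int f^{-1}\circ\hat A_\sigma\,\mu(d\sigma)$ and the bound on $(M\otimes M)/F$. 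The conditional-expectation identity is then the abstract Bayes formula: for any distribution with density $f$ w.r.t.\ a reference law $R=M\otimes M$, and any sub-$\sigma$-algebra $\mathcal G$, one has $\condexpat{f\cdot R}{g}{\mathcal G}=\condexpat R{gf}{\mathcal G}/\condexpat R{f}{\mathcal G}$, $R$-a.s.; taking $\mathcal G=\sigma(V+W,|V|^2+|W|^2)$ and substituting item~(\ref{prop:conditioning1}) for numerator (with $g\rightsquigarrow gf$) and denominator (with $g\rightsquigarrow f$) gives the first equality, and the last equality is the $\sigma\to\Pi$ change of variables of item~(\ref{prop:conditioning1b}) applied to $gf$ and to $f$ separately.

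The main obstacle I anticipate is not any single estimate but the bookkeeping in item~(\ref{prop:conditioning2}): one must verify the two $L^{1+\varepsilon}$ bounds uniformly enough that Jensen plus the $A_\sigma$-invariance of $M\otimes M$ closes the loop (in particular that the exponent $1+\varepsilon$ furnished by Portmanteau for $F$ can be reused for the averaged density, which works because $x\mapsto x^{1+\varepsilon}$ is convex so Jensen does not increase the norm), and then track the two change-of-variables steps through the Bayes quotient without sign or Jacobian errors. Everything else is a direct appeal to the kinematic identities \eqref{eq:momentum}--\eqref{eq:sigma.vs.Pi}, the measure identities \eqref{eq:sigma-to-omega}--\eqref{eq:integral-sigma-to-Pi}, and the independence of the collision invariants from $\versof{V-W}$ recorded above.
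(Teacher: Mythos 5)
Your treatment of items (1)--(3) matches the paper's proof in substance: item (1) is the orthogonal invariance $A_\Pi(V,W)\sim(V,W)$ together with the fact that $A_\Pi$ fixes the conditioning variables (the paper verifies the defining property of conditional expectation by testing against bounded $h_1(V+W)h_2(\collinv V W)$ rather than via regular conditional laws, but that is the same argument); item (2) rests on the independence of the collision invariants from $\versof{V-W}\sim\mu$, exactly as in the paper; item (3) is the change of variables \eqref{eq:integral-sigma-to-Pi} combined with $\absoluteval{\kappa\cdot\omega}=\absoluteval{\Pi\kappa}$ and the definition \eqref{eq:nu-integral} of $\nu$. The Bayes-formula step of item (4) is also the paper's. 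The one place you genuinely diverge is in certifying $\bigl(\int_{\Sd}f\circ\hat A_\sigma\,\mu(d\sigma)\bigr)\cdot M\otimes M\in\maxexp{M\otimes M}$: you invoke Portmanteau (\ref{prop:maxexp-pormanteau-6}), whereas the paper checks the open-arc condition $\expectat{M\otimes M}{\hat f^{\,t}}<\infty$ for $t$ in a neighborhood $I$ of $[0,1]$ directly, using Jensen for the convex powers $t\notin[0,1]$ and the averaging identity from item (2).

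Two steps in your version of that membership argument are wrong as stated, though both are repairable. First, $\hat A_\sigma(V,W)\sim(V,W)$ is false for a \emph{fixed} $\sigma$: the image law is concentrated on $\setof{(v,w)}{\versof{v-w}=\sigma}$, so $\normat{1+\varepsilon}{f\circ\hat A_\sigma}=\normat{1+\varepsilon}{f}$ does not hold $\sigma$ by $\sigma$ (the distributional identity holds only for an independent random $S\sim\mu$, or after averaging in $\sigma$). What you actually need after Jensen is the averaged identity $\int_{\Sd}\expectat{M\otimes M}{h\circ\hat A_\sigma}\,\mu(d\sigma)=\expectat{M\otimes M}{h}$ — equivalently, that $\hat f$ is a conditional expectation of $f$ and hence an $L^r(M\otimes M)$-contraction — which does close the loop. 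Second, Portmanteau (\ref{prop:maxexp-pormanteau-6}) requires $\hat f^{-1}\in L^{1+\varepsilon}(\hat f\cdot M\otimes M)$, i.e.\ $\expectat{M\otimes M}{\hat f^{-\varepsilon}}<\infty$, not $\hat f^{-1}\in L^{1+\varepsilon}(M\otimes M)$. The condition you state amounts to $\expectat{M\otimes M}{f^{-(1+\varepsilon)}}<\infty$ after Jensen, which would force $-(1+\varepsilon)U\in\mathrm{dom}(K_{M\otimes M})$; this is not guaranteed for $U\in\sdomain{M\otimes M}$, since the exponential arc through $F$ is only known to extend to a neighborhood of $[0,1]$, which need not contain $-(1+\varepsilon)$. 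With the exponent corrected to $-\varepsilon$ for small $\varepsilon>0$ (or, as in the paper, with all $t\in I\setminus[0,1]$ treated at once), your Jensen argument goes through verbatim.
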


\begin{proof} 
\begin{enumerate}
\item We use $V+W = V_\Pi+W_\Pi$, $\collinv V W = \collinv {V_\Pi}{W_\Pi}$, $(V,W) \sim (V_\Pi,W_\Pi)$. For all bounded $h_1 \colon \reals^3 \to \reals$ and $h_2 \colon \reals \to \reals$, we have

\begin{multline*}
  \expectof{\condexp {g\left(A_\Pi(V,W)\right)}{V+W,\absoluteval{V}^2+\absoluteval{W}^2}h_1(V+W)h_2(\collinv V W)} = \\
\expectof{g(V_\Pi,W_\Pi)h_1(V+W)h_2(\collinv V W)} = \\ \expectof{g(V_\omega,W_\omega)h_1(V_\Pi+W_\Pi)h_2(\collinv {V_\omega} {W_\omega})} = \\ \expectof{g(V,W)h_1(V+W)h_2(\collinv V W)} = \\ \expectof{\condexp {g(V,W)}{V+W,\absoluteval{V}^2+\absoluteval{W}^2}h_1(V+W)h_2(\collinv V W)}.
\end{multline*}
\item For a generic integrable $h \colon \reals^3 \times \reals^3 \to \reals$ we have

  \begin{multline*}
    \expectof{h(V,W)} = \\ \expectof{h\left(\frac{V+W}2+\frac{\absoluteval{V-W}}2 \versof{V-W},\frac{V+W}2 - \frac{\absoluteval{V-W}}2 \versof{V-W}\right)} = \\ \int_{\Sd} \expectof{h\left(\frac{V+W}2+\frac{\absoluteval{V-W}}2 \sigma,\frac{V+W}2 - \frac{\absoluteval{V-W}}2 \sigma \right)} \ \mu(d\sigma) = \\ \int_{\Sd} \expectof{h(\hat A_\sigma(V,W))} \ \mu(d\sigma) \ , 
  \end{multline*}
because $\versof{V-W} \sim \mu$, and $(V+W)$, $\absoluteval{V-W}$, $\versof{V-W}$ are independent.

The random variable

\begin{equation*}
  \integrald{\Sd}{g\circ \hat A_\sigma (V,W)}{\mu(d\sigma)} = \integrald{\Sd}{g\left(\frac{V+W}2 + \frac{\absoluteval{V-W}}2 \sigma,\frac{V+W}2 - \frac{\absoluteval{V-W}}2 \sigma \right)}{\mu(d\sigma)}
\end{equation*}
is a function of the collision invariants i.e., it is of the form $\tilde g(V+W,\collinv V W)$. For all bounded $h_1 \colon \reals^3 \to \reals$ and $h_2 \colon \reals \to \reals$, we apply the previous computation to $h = g h_1 h_2$ to get

\begin{multline*}
\expectof{\left(\integrald{\Sd}{g\circ \hat A_\sigma (V,W)}{\mu(d\sigma)}\right) h_1(V+W)h_2(\collinv V W)}
= \\ \integrald{\Sd}{\expectof{g\circ \hat A_\sigma (V,W) h_1(V+W)h_2(\collinv {V} {W})}} {\mu(d\sigma)} = \\ \integrald{\Sd}{\expectof{g\circ \hat A_\sigma (V,W) h_1(V_\sigma+W_\sigma)h_2(\collinv {V_\sigma} {W_\sigma})}} {\mu(d\sigma)} 
= \\ \expectof{g(V,W)h_1(V+W)h_2(\collinv V W)}.
\end{multline*}

\item We use Item \ref{prop:conditioning1} and the equality $\hat A_\sigma(v,w) = A_\Pi(v,w)$ when $\Pi = \versof{\kappa-\sigma} \otimes \versof{\kappa-\sigma}$ and $\kappa=\versof{v-w}$ to write

\begin{equation*}
  \integrald{\Sd}{g\left(A_{ {\versof{V-W}-\sigma} \versof{\otimes}  {\versof{V-W}-\sigma}}(V,W)\right)}{\mu(d\sigma)} = \condexp{g(V,W)}{V+W, \absoluteval{V}^2+\absoluteval{W}^2}
\end{equation*}
where, for given vectors $u,v \in \reals^{3}$, $u,v \neq 0$ we simply denote $u \,\versof{\otimes}\, v=\versof{u} \otimes \versof{v}.$
From Eq. \eqref{eq:integral-sigma-to-Pi}, the left-end-side can be rewritten as
an integral with respect to $\omega \in \Sd$,

\begin{equation*}
  \integrald{\Sd}{g\left(A_{\omega\otimes \omega}(V,W)\right) 2\absoluteval{\versof{V-W} \cdot \omega}}{\mu(d\omega)} =  \integrald{\Sd}{g\left(A_{ {\versof{V-W}-\sigma} \versof{\otimes} {\versof{V-W}-\sigma}}(V,W)\right)}{\mu(d\sigma)} \ .
\end{equation*}
If $\Pi = \omega \otimes \omega$, then $\absoluteval{\kappa \cdot \omega} = \absoluteval{\omega \otimes \omega \kappa} = \absoluteval{\Pi \kappa}$. Using that together with the definition of the measure $\nu$ on $\Pi(1)$ in Eq. \eqref{eq:nu-integral}, we have the result:

\begin{multline*}
  \integrald{\Sd}{g\left(A_{\omega\otimes \omega}(V,W)\right) 2\absoluteval{\versof{V-W} \cdot \omega}}{\mu(d\omega)} = \\  \integrald{\Sd}{g\left(A_{\omega\otimes \omega}(V,W)\right) 2\absoluteval{(\omega \otimes \omega) \versof{V-W}}}{\mu(d\omega)} = \\ 2 \integrald{\Pi(1)}{g\left(A_{\Pi}(V,W)\right) \absoluteval{\Pi(\versof{V-W})}}{\nu(d\Pi)}.
\end{multline*}
\item We use Th. \ref{prop:maxexp-pormanteau}. If $F \in \maxexp {M \otimes M}$, then

\begin{equation*}
  F = \euler^{U - K_{M \otimes M}(U)}   M \otimes M, \quad U \in \sdomain {M \otimes M} \ ,
\end{equation*}
and there exists a neighborhood $I$ of $[0,1]$, where the one dimensional exponential family

\begin{equation*}
  F_t = \euler^{tU - K_0(tM)}   M \otimes M, \quad t \in I,
\end{equation*}
exists. The random variable

\begin{equation}
 \hat f  = \integrald{\Sd}{f \circ \hat A_\sigma}{\mu(d\sigma)} 
\end{equation}
is a positive probability density with respect to $M \otimes M$ because it is the conditional expectation in $M \otimes M$ of the positive density $f$. 

In order to show that $\expectat {M \otimes M}{(\hat f)^t} < + \infty$ for $t \in I$, it is enough to consider the convex cases, $t < 0$ and $t> 1$, because otherwise $\expectat {M \otimes M}{(\hat f)^t} \le 1$. We have

\begin{equation*}
  \integrald {\Sd} {f \circ \hat A_\sigma} {\mu(d\sigma)} = \integrald {\Sd} {\euler^{U \circ \hat A_\sigma - K_{M \otimes M}(U)}} {\mu(d\sigma)} \ ,
\end{equation*}
so that in the convex cases:

\begin{multline*}
  \expectat {M \otimes M} {\left(\integrald {\Sd} {f \circ \hat A_\sigma} {\mu(d\sigma)}\right)^t} = \expectat {M \otimes M} {\left(\integrald {\Sd} {\euler^{U \circ \hat A_\sigma - K_{M \otimes M}(U)}} {\mu(d\sigma)}\right)^t} \le \\ \expectat {M \otimes M} {\integrald {\Sd} {\euler^{tU \circ \hat A_\sigma - tK_{M \otimes M}(U)}} {\mu(d\sigma)}} = \expectat {M \otimes M} {\euler^{tU  - tK_{M \otimes M}(U)}} = \\ \euler^{K_{M \otimes M}(tU) - t K_{M \otimes M}(U)} < +\infty, \quad t \in I \setminus [0,1].
\end{multline*}
To conclude, use Bayes' formula for conditional expectation, 

\begin{equation*}
  \condexpat {f  \cdot M \otimes M} {A} {\mathcal B} = \frac {\condexpat {M \otimes M} {Af} {\mathcal B}} {\condexpat {M \otimes M} {f } {\mathcal B}} \ .
\end{equation*}
to the expressions of conditional expectation in Item \ref{prop:conditioning1} and Item \ref{prop:conditioning1b} above. 
\end{enumerate}
\end{proof}

\begin{remark}
In the last Item, we compute a conditional expectation of a density $f$, that is 

\begin{equation*}
  \condexp{f (V,W)}{V+W, \absoluteval{V}^2+\absoluteval{W}^2} = \hat f (V+W, \absoluteval{V}^2+\absoluteval{W}^2).
\end{equation*}
The random variable $\hat f  \colon \reals^3 \times \reals_+ \to \reals$ is the density of the image of $F \ dvdw$ with respect to the image of $M(v)M(w)\ dvdw$. 
\end{remark}

\subsection{Interactions}\label{sec:interaction}

We introduce here the crucial role played by microscopic interaction in the definition of the Boltzmann collision operator. In the physics literature, such interaction are referred to as the  kinetic collision kernel and takes into account the intermolecular forces suffered by particles during a collision \cite{MR1942465}. Before defining formally what we mean by interaction, we first observe that, if  $M$ is the Maxwell density on $\reals^3$ and $f,g\smile M$ then
$$f \otimes g \smile M \otimes M$$
where $M \otimes M$ is the standard normal density on $\reals^6$ and $f\otimes g$ is a density on $\reals^6$. Indeed, one has 

  \begin{align*}
    f(v) &= \euler^{U(v)-K_M(U)} M(v), \quad U \in \sdomain M \ , \\
    g(w) &= \euler^{V(w)-K_M(V)} M(w), \quad V \in \sdomain M \ . 
  \end{align*}
It follows that the product density has the form 

\begin{equation*}
      f(v)g(w) = \euler^{U(v)+V(w)-K_M(U)-K_M(V)} M(v)M(w), \quad U\oplus V \in \sdomain M\otimes \sdomain M \subset \sdomain{M\otimes M} \ ,
\end{equation*}
which implies $f \otimes g \smile M \otimes M$, with $K_{M\otimes M}(U\oplus V) = K_M(U) + K_M(V)$.

\begin{definition} With the previous notations, we say that $b \colon \reals^3 \times \reals^3 \to \reals^{+}$ is an \emph{interaction} on $\maxexp M \otimes \maxexp M$, if $\expectat {f \otimes g} b < + \infty$, so that  

\begin{equation*}
  \frac b {\expectat {f \otimes g} b}  f\otimes g \colon (v,w) \in \reals^{3}\times \reals^{3} \mapsto \frac {b(v,w)}{\expectat {f \otimes g} b}f(v)g(w)
\end{equation*}
is a density. 
\end{definition}
Sometimes, we make the abuse of notation by writing $\expectat {b \cdot f \otimes g} \cdot$, where the obvious normalization is not written down. 
As can been seen, here interactions indicate only a class of suitable weight functions $b$ for which $b(v,w)f(v)g(w)$ is still (up to normalisation) a density. This is in accordance with the usual role played by the kinetic collision kernel (see \cite{MR1942465}).

According to the Portmanteau  Theorem \ref{prop:maxexp-pormanteau}, it holds $\frac b {\expectat {f \otimes g} b} \cdot f\otimes g \smile M\otimes M$ if and only if $\frac b {\expectat {f \otimes g} b} \cdot f\otimes g \smile f \otimes g$, which, in turn, is equivalent to $\expectat {f\otimes g} {b^{1+\epsilon}}, \expectat{b\cdot f\otimes g}{b^{-1-\epsilon}} < \infty$ for some $\epsilon>0$.
\begin{proposition}
\label{prop:lodslemma}
Let $b \colon \reals^3 \times \reals^3 \to \reals^{+}$ be such that for some real $A \in \reals$ and positive $B, C, \lambda \in \reals_>$, it holds

\begin{equation*}
  C \absoluteval{u-v}^\lambda \le b(u,v) \le A + B\absoluteval{u-v}^2 \ .
\end{equation*}
Then the $b$ is an interaction on $\maxexp M \times \maxexp M$ and for all $f, g \smile M$ the following holds.
\begin{enumerate}
\item $\frac b {\expectat {f \otimes g} b} \cdot f\otimes g \smile M\otimes M$.  
\item Assume moreover that the interaction $b$ is a function of the invariants only

  \begin{equation*}
    b(v,w) = \hat b(v+w,\collinv v w) \ .
  \end{equation*}
It follows 

\begin{equation*}
  \condexpat {M \otimes M} { b \cdot f \otimes g} {V+W,\collinv V W} = b(V,W) \integrald {\Sd} {f \otimes g \circ \hat A_\sigma(V,W)} {\mu(d\sigma)} \ .
\end{equation*}
and moreover a sufficiency relation holds i.e., for all integrable $F\colon \reals^3 \times \reals^3 \to \reals$, it holds

\begin{equation*}
  \condexpat {b \cdot f \otimes g}{F(V,W)}{V+W,\collinv V W} = \condexpat {f \otimes g}{F(V,W)} {V+W,\collinv V W} \ .
\end{equation*}
\end{enumerate}
\end{proposition}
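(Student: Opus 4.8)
The plan is to dispose of the three assertions --- that $b$ is an interaction, item (1), and item (2) --- one at a time, each time reducing to integrability facts for the Gaussian $M\otimes M$ and to the conditioning identities of Proposition~\ref{prop:conditioning}. \textbf{Step 1 (the interaction property).} Since $f,g\smile M$, the tensor product satisfies $f\otimes g\smile M\otimes M$, so by the Portmanteau Theorem~\ref{prop:maxexp-pormanteau}.(\ref{prop:maxexp-pormanteau-6}) there is $\delta>0$ with $(f\otimes g)/(M\otimes M)\in L^{1+\delta}(M\otimes M)$. The upper bound gives $0\le b(v,w)\le A+B\absoluteval{v-w}^2$, and $(v,w)\mapsto A+B\absoluteval{v-w}^2$ is a polynomial, hence belongs to every $L^r(M\otimes M)$ because the Gaussian $M\otimes M$ has finite moments of all orders. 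Writing $\expectat{f\otimes g}{b}=\expectat{M\otimes M}{\frac{f\otimes g}{M\otimes M}\,b}$ and applying H\"older with exponents $1+\delta$ and $(1+\delta)/\delta$ yields $\expectat{f\otimes g}{b}<\infty$.

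\textbf{Step 2 (item (1)).} By the remark preceding the statement, $\frac{b}{\expectat{f\otimes g}{b}}\,f\otimes g\smile M\otimes M$ is equivalent to the existence of $\epsilon>0$ with $\expectat{f\otimes g}{b^{1+\epsilon}}<\infty$ and $\expectat{f\otimes g}{b^{-\epsilon}}<\infty$. The first bound follows exactly as in Step 1, since $b^{1+\epsilon}\le(A+B\absoluteval{v-w}^2)^{1+\epsilon}$ is again dominated by a polynomial and H\"older against $(f\otimes g)/(M\otimes M)\in L^{1+\delta}$ closes it as soon as $\epsilon$ is small. For the second, the lower bound gives $b^{-\epsilon}\le C^{-\epsilon}\absoluteval{v-w}^{-\lambda\epsilon}$, and the key observation is that $\absoluteval{v-w}^{-s}\in L^{1}(M\otimes M)$ for every $s<3$: the orthogonal change of variables $z=(v+w)/\sqrt2$, $y=(v-w)/\sqrt2$ turns $M\otimes M$ into a standard Gaussian in $(z,y)$ and reduces the claim to $\int_{\reals^3}\absoluteval{y}^{-s}\euler^{-\absoluteval{y}^2/2}\,dy<\infty$, which holds precisely when $s<3$. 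Applying H\"older with exponents $1+\delta$ and $(1+\delta)/\delta$ to $\expectat{f\otimes g}{b^{-\epsilon}}=\expectat{M\otimes M}{\frac{f\otimes g}{M\otimes M}\,b^{-\epsilon}}$ then finishes the step, provided $\epsilon$ is chosen so small that $\lambda\epsilon(1+\delta)/\delta<3$.

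\textbf{Step 3 (item (2)).} Put $\mathcal B=\sigma(V+W,\collinv V W)$. Because $b(v,w)=\hat b(v+w,\collinv v w)$ is $\mathcal B$-measurable and nonnegative, it factors out of the conditional expectation,
\begin{equation*}
\condexpat{M\otimes M}{b\cdot f\otimes g}{\mathcal B}=b(V,W)\,\condexpat{M\otimes M}{f\otimes g}{\mathcal B},
\end{equation*}
and Proposition~\ref{prop:conditioning}.(\ref{prop:conditioning1}), applied to $f\otimes g$, rewrites the last conditional expectation as $\integrald{\Sd}{f\otimes g\circ\hat A_\sigma(V,W)}{\mu(d\sigma)}$; this gives the first identity. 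For the sufficiency relation, observe that the Radon--Nikodym density of the probability $\frac{b}{\expectat{f\otimes g}{b}}f\otimes g$ with respect to $f\otimes g$ is the $\mathcal B$-measurable function $b/\expectat{f\otimes g}{b}$, so the two probability measures have the same conditional law given $\mathcal B$; hence $\condexpat{b\cdot f\otimes g}{F(V,W)}{\mathcal B}=\condexpat{f\otimes g}{F(V,W)}{\mathcal B}$ for every integrable $F$. Equivalently, one may invoke Bayes' formula for conditional expectation (as in the proof of Proposition~\ref{prop:conditioning}.(\ref{prop:conditioning2})) and note that the $\mathcal B$-measurable factor $b$ cancels between numerator and denominator.

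The hard part will be the negative-moment bound $\expectat{f\otimes g}{b^{-\epsilon}}<\infty$ in Step 2: the positive-moment estimates use only that the Gaussian has all moments together with the uniform $L^{1+\delta}$-integrability of $(f\otimes g)/(M\otimes M)$, and Step 3 is a short measurability argument, whereas controlling the local singularity of $b^{-\epsilon}$ near the diagonal $\{v=w\}$ is precisely where the lower bound $b\ge C\absoluteval{v-w}^\lambda$ is used, and one has to keep the H\"older-inflated exponent $\lambda\epsilon(1+\delta)/\delta$ below the critical value~$3$.
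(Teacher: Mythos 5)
Your proof is correct, and it diverges from the paper's precisely at the one genuinely delicate point. For item (1), the paper handles the negative-moment bound (in the form $\expectat{b\cdot f\otimes g}{b^{-1-\epsilon}}<\infty$, which you correctly identify as equivalent to $\expectat{f\otimes g}{b^{-\epsilon}}<\infty$) via the Hardy--Littlewood--Sobolev inequality: it bounds $\iint f(u)g(v)\absoluteval{u-v}^{-\epsilon\lambda}\,du\,dv$ by a constant times $\normat{\alpha}{f}\,\normat{\beta}{g}$ with $\frac1\alpha+\frac{\epsilon\lambda}{3}+\frac1\beta=2$ and $\alpha,\beta$ in a right neighborhood of $1$, which is where $f,g\smile M$ enters. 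You instead exploit the Gaussian structure directly: the orthogonal change of variables $(v,w)\mapsto\bigl((v+w)/\sqrt2,(v-w)/\sqrt2\bigr)$ gives $\absoluteval{v-w}^{-s}\in L^1(M\otimes M)$ for $s<3$, and H\"older against $(f\otimes g)/(M\otimes M)\in L^{1+\delta}(M\otimes M)$ (Portmanteau, item (6)) closes the estimate once $\lambda\epsilon(1+\delta)/\delta<3$. Your route is more elementary in that it avoids HLS, but it is tied to the explicit Maxwellian weight; the paper's HLS argument uses only $\normat{\alpha}{f},\normat{\beta}{g}<\infty$ and would survive a change of reference density. For the positive moment the paper is marginally slicker --- $b\in \Lexp{M\otimes M}=\Lexp{f\otimes g}$ by Portmanteau item (5), hence $b\in L^{1+\epsilon}(f\otimes g)$ for every $\epsilon>0$ --- but your H\"older argument is equivalent in substance. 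Your Step 3 is a correct expansion of the paper's one-line appeal to the Conditioning Proposition: factoring the invariant-measurable $b$ out of the conditional expectation and cancelling it in the Bayes quotient is exactly what is meant there.
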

\begin{proof} 1. We can assume $\expectat {f \otimes g} b = 1$. 
As $b(u,v) \le A + B \absoluteval{u-v}^2 \le A + 2B \absoluteval{(u,v)}^2$, we have $b \in L^\Phi(M\otimes M) = L^\Phi(f\otimes g)$, and hence $b \in L^{1+\epsilon}(f\otimes g)$ for \emph{all} $\epsilon > 0$. For the second inequality we use the Hardy-Littlewood-Sobolev inequality \cite[Th. 4.3]{MR1817225}. We have

  \begin{equation*}
    \expectat{b\cdot f\otimes g}{b^{-1-\epsilon}} = \iint \frac{f(u)g(v)}{b(u,v)^\epsilon}\ dudv \le C^{-1} \iint \frac{f(u)g(v)}{\absoluteval{u-v}^{\epsilon\lambda}}\ dudv. 
  \end{equation*}
If $\frac 1\beta + \frac {\epsilon \lambda}3 + \frac 1\alpha = 2$, by the H-L-S inequality the last integral is bounded by a constant times $\normat \alpha f \normat \beta g$. From $f,g \smile M$ we get that $\normat \alpha f \normat \beta g$ is finite for $\alpha,\beta$ in a right neighborhood of 1. There exists $\epsilon = \frac{3}\lambda \left(2 - \frac1\alpha-\frac1\beta\right) > 0$ satisfying all conditions.

2. It is a special case of the Conditioning Theorem \ref{prop:conditioning}.
\end{proof}
Let us discuss the differentiability of the operations we have just introduced.

\begin{proposition} \ 
  \begin{enumerate}
  \item The product mapping $\maxexp M \ni f \mapsto f \otimes f$ is a differentiable map into $\maxexp {M\otimes M}$ with tangent mapping given for any vector field $X \in T \maxexp M$ by

    \begin{equation*}
      T_f \maxexp M \ni X_f \mapsto X_f \oplus X_f \in T_{f\otimes f} \maxexp {M\otimes M} \ .
    \end{equation*}
  \item Let $b$ be an interaction on $\maxexp M \otimes \maxexp M$. If the mapping 

    \begin{equation*}
f \mapsto \frac b {\expectat {f \otimes f} b} \cdot f \otimes f      
    \end{equation*}
is defined on $\maxexp M$ with values in $\maxexp {M \otimes M}$, then it is differentiable with tangent mapping given for all vector field $X \in T \maxexp M$ by

    \begin{equation*}
      X_f \mapsto X_f \oplus X_f - \expectat {b \cdot f \otimes f} {X_f \oplus X_f} \ .
    \end{equation*}
  \end{enumerate}
\end{proposition}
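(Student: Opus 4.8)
The plan is to treat both statements as differentiability results for maps between exponential manifolds, working in the charts centered at $M$ and at $M\otimes M$ respectively, where everything reduces to the analytic behaviour of $K_M$ and $K_{M\otimes M}$ established in Proposition \ref{prop:cumulant}.(\ref{prop:cumulant4}) and Proposition \ref{pr:misc}. For item (1), I would write $f = e_M(u)$ with $u \in \sdomain M$ and observe, exactly as in the discussion of interactions preceding Proposition \ref{prop:lodslemma}, that $f\otimes f = e_{M\otimes M}(u\oplus u)$ with $K_{M\otimes M}(u\oplus u) = 2K_M(u)$. Thus in the charts the product map is simply the bounded linear map $u \mapsto u\oplus u$ from $\eBspace M$ into $\eBspace{M\otimes M}$ (it lands in the domain since $\sdomain M \oplus \sdomain M \subset \sdomain{M\otimes M}$), so it is manifestly $C^\infty$ and its derivative is that same linear map. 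Translating back through the tangent-bundle charts of Definition (\ref{item:velocity}), a tangent vector $X_f \in \eBspace f$ at $f$ pulls back to $\etransport f M X_f \in \eBspace M$, maps to $\etransport f M X_f \oplus \etransport f M X_f$, and pushes forward to $\etransport {M\otimes M}{f\otimes f}(\etransport f M X_f \oplus \etransport f M X_f)$; since transports act by subtracting a constant and the whole composite just re-centers, this is exactly $X_f \oplus X_f \in \eBspace{f\otimes f}$, which is the claimed tangent map.

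For item (2), I would factor the map as the composition of the product map $f\mapsto f\otimes f$ from part (1) with the map $h \mapsto \frac{b}{\expectat h b}\, h$ on $\maxexp{M\otimes M}$. The hypothesis is precisely that this second map is well-defined with values in $\maxexp{M\otimes M}$, so it suffices to show it is differentiable there and compute its derivative. In the chart centered at $M\otimes M$, if $h = e_{M\otimes M}(U)$ then $\frac{b}{\expectat h b} h = e_{M\otimes M}(U + \log b - c(U))$ where $\log b$ is a fixed random variable and $c(U)$ is the scalar making the exponent centered; more conveniently, since $b$ is an interaction, $\log b \in L^\Phi(M\otimes M)$ by the embedding Theorem \ref{theo:embed} only under the growth bound, so I should instead argue directly: the map $h \mapsto \expectat h b = \expectat {M\otimes M}{b\,e^{U - K(U)}}$ is a composition of $K_{M\otimes M}$-type analytic functionals (Proposition \ref{prop:expisanalytic} gives analyticity of $U\mapsto e^{U-K(U)}$ into $L^a$, and $b$ is in the dual of a suitable $L^a$), hence smooth and nonvanishing, so $h\mapsto \frac{b}{\expectat h b} h$ is smooth as a map into positive densities. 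Its derivative in a direction $w\in\eBspace{h}$ is computed by differentiating $\log\bigl(\frac{b}{\expectat h b} h\bigr) = \log b + \log h - \log\expectat h b$: the first term is constant, the second contributes $w$ (this is the meaning of the velocity $Dh$), and the third contributes $-\expectat {b\cdot h} w / \expectat h b$, i.e. after normalization $-\expectat{b\cdot h}{w}$, giving the velocity $w - \expectat{b\cdot h}{w}$ in $\eBspace{\frac{b}{\expectat h b} h}$. Composing with part (1), a vector field $X_f$ pushes to $X_f\oplus X_f$ and then to $X_f\oplus X_f - \expectat{b\cdot f\otimes f}{X_f\oplus X_f}$, as claimed.

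The main obstacle is the regularity of the normalization functional $f \mapsto \expectat{f\otimes f}{b}$ and, more subtly, checking that the normalized product actually stays inside $\maxexp{M\otimes M}$ uniformly enough along curves to legitimately differentiate the chart expression — but the latter is exactly what the hypothesis of item (2) grants, so the real work is showing $f \mapsto \expectat{f\otimes f} b$ is $C^\infty$ and positive. This follows because $b \le A + B\absoluteval{u-v}^2$ type bounds (as in Proposition \ref{prop:lodslemma}) or, in general, the mere assumption that $b$ is an interaction put $b$ in the predual $L^{\Phi_*}(M\otimes M)$, against which the analytic map $U \mapsto e^{U-K_{M\otimes M}(U)} \in L^{\Phi}(M\otimes M)$ pairs smoothly by Proposition \ref{prop:expisanalytic}; positivity is immediate since $b>0$ and the density is positive. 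Once that is in hand, the chain rule and the Leibniz-type computation of the derivative of a normalized density are routine, and the transport bookkeeping to express the answer as a tangent map on $T\maxexp M$ is exactly the re-centering already used in part (1).
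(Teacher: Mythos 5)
Your item (1) is essentially the paper's own proof: in the charts at $M$ and $M\otimes M$ the product map is the continuous linear map $U\mapsto U\oplus U$ with $K_{M\otimes M}(U\oplus U)=2K_M(U)$, and since the exponential transport commutes with $\oplus$, the re-centering gives exactly $X_f\mapsto X_f\oplus X_f$. Nothing to add there.

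For item (2) you take a genuinely different route, and it is this route that manufactures what you call the ``main obstacle.'' You factor through $h\mapsto \frac b{\expectat h b}\,h$ and then try to differentiate the scalar functional $h\mapsto\expectat h b$ directly, which forces you to place $b$ in a space dual to the one where $U\mapsto\euler^{U-K(U)}$ is analytic. Your assertion that the mere definition of an interaction puts $b$ in $\LlogL{M\otimes M}$ is not justified: an interaction only requires $\expectat{f\otimes g}b<\infty$, i.e.\ $b\in L^1(f\otimes g)$, and this gives no Orlicz-dual control at $M\otimes M$ (even $L^1(M\otimes M)$ strictly contains $\LlogL{M\otimes M}$). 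The paper sidesteps the entire issue by choosing the chart centered at $\frac b{\expectat{M\otimes M}b}\cdot M\otimes M$ for the codomain: the ratio of the image density to this center is $\frac{\expectat{M\otimes M}b}{\expectat{f\otimes f}b}\expof{U\oplus U-2K_M(U)}$, whose logarithm is $U\oplus U$ plus scalars, and the centering built into $s_p$ annihilates every scalar --- in particular the normalization constant $\expectat{f\otimes f}b$ is never differentiated, nor even shown to be smooth. The chart expression is then the continuous linear map $U\mapsto U\oplus U-\expectat{b\cdot M\otimes M}{U\oplus U}$, trivially smooth with derivative equal to itself, and the transport bookkeeping yields the stated tangent map. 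Your final Leibniz computation ($w\mapsto w-\expectat{b\cdot h}w$ for the normalization step) and the resulting formula are correct, and your argument can be repaired: the hypothesis of item (2), applied at $f=M$ and combined with the Portmanteau theorem, gives $\log b\in\Lexp{M\otimes M}$ up to an additive constant, so the normalization map is affine in the chart at $M\otimes M$, which is your first (abandoned) idea. But as written, the analytic argument you substitute for it has a gap.
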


\begin{proof} 
\begin{enumerate}
\item 
We have already proved that $f \smile M$ implies $f \otimes f \smile M \otimes M$ and that the mapping in the charts centered at $M$ and $M\otimes M$ respectively is represented as $\sdomain M \ni U \mapsto U + U \in \sdomain {M \otimes M}$. The differential of the linear map is again $ T_M \maxexp M \ni V \mapsto V \oplus V \in T_{M\otimes M} \maxexp {M\otimes M}$. The transport commutes with the $\oplus$ operation, 

    \begin{equation*}
      \transport{M \otimes M}{f \otimes f} \left(V \oplus V\right) = \left(\transport M f V\right) \oplus \left(\transport M f V\right) \ ,
    \end{equation*}
and the result follows.
\item
Let $U$ be the coordinate of $f$ at $M$. By assumption, we have

  \begin{equation*}
    \frac b {\expectat {f \otimes f} b} \cdot f \otimes f \smile \frac b {\expectat {M \otimes M} b} \cdot M \otimes M
  \end{equation*}
and

\begin{equation*}
  \frac{\frac b {\expectat {f \otimes f} b} \cdot f \otimes f}{\frac b {\expectat {M \otimes M} b} \cdot M \otimes M} = \frac{\expectat {M \otimes M} b}{\expectat {f \otimes f} b} \expof{U \oplus U - 2 K_M(U)} \ , 
\end{equation*}
so that the coordinate of $\frac b {\expectat {f \otimes f} b} \cdot f \otimes f$ in the chart centered at $\frac b {\expectat {M \otimes M} b} \cdot M \otimes M$ is 

\begin{equation*}
  U \oplus U - \expectat {b \cdot M \otimes M}{U \oplus U} = \transport {M \otimes M}{b \cdot M \otimes M} (U \oplus U).
\end{equation*}
The expression if linear and so is the expression of the tangent map

\begin{equation*}
  V \mapsto V \oplus V - \expectat {b \cdot M \otimes M} {V \oplus V} \ .
\end{equation*}
In conclusion, for each vector field $X$ of $T \maxexp M$, at $f$ we have $V = \transport f M X_f$ and $V \oplus V = \transport {f \oplus f} {M \oplus M} ( X_f \oplus X_f)$, hence the action on $T_f \maxexp M$ is as stated.
\end{enumerate}
\end{proof}

Assume $f \smile M$ and let $b$ be an interaction on $f \otimes f$ which depends on the invariants only and such that $b \cdot f \otimes f \smile M \otimes M$. For each random variable $g \in L^{\cosh-1}(M) = L^{\cosh-1}(f)$, define $\widebar g = \frac12 \ g \oplus g$, which belongs to $L^{\cosh-1}(M \otimes M) = L^{\cosh-1} (f \otimes f)$. Define the operator

\begin{equation*}
  A \colon L^{\cosh-1}(M) \to L^{\cosh -1}(M \otimes M) \ ,
\end{equation*}
by

 \begin{equation*}
    Ag(v,w) = \integrald{\Sd}{\widebar g\left(\hat A_\sigma(v,w)\right)}{\mu(d\sigma)} - \widebar g(V,W) \ .
  \end{equation*}
As constant random variables are in the kernel of the operator $A$, we assume $\expectat f g = 0$.

\subsection{Maxwell-Boltzmann and Boltzmann operator}
\label{sec:maxwell-boltzmann}

As explained by  C. Villani \cite[I.2.3]{MR1942465}, Maxwell obtained a weak form of Boltzmann operator before Boltzmann himself.  We rephrase in geometric-probabilistic language such a Maxwell's weak form by expanding and rigorously proving what was hinted to in \cite{MR3130268}.

Let $b \colon \reals^3 \times \reals^3 \to \reals^{+}$ be an interaction on $\maxexp M \times \maxexp M$ which depends on the invariants only and such that $b \cdot f \otimes f \smile M \otimes M$ if $f \smile M$, cf. Prop. \ref{prop:lodslemma}. We shall call such a $b$ a \emph{proper interaction}.

For each random variable $g \in L^{\cosh-1}(M) = L^{\cosh-1}(f)$, define $\widebar g = \frac12 \ g \oplus g$ by

$$\widebar{g}(v,w)=\frac{1}{2}\left(g(v)+g(w)\right)$$
 which it is easily shown to belong to $L^{\cosh-1}(M \otimes M) = L^{\cosh-1} (f \otimes f)$. The mapping $g \mapsto \widebar g$ is a version of the conditional expectation $\condexpat {M \otimes M}{g}{\mathcal S}$, where $\mathcal S$ is the $\sigma$-algebra generated by symmetric random variables. 

We define the operator 

\begin{equation*}
  A \colon L^{\cosh-1}(M) \to L^{\cosh -1}(M \otimes M) \ ,
\end{equation*}
by

 \begin{equation*}
    Ag(v,w) = \integrald{\Sd}{\widebar g\left(\hat A_\sigma(v,w)\right)}{\mu(d\sigma)} - \widebar g(V,W) \ ,
  \end{equation*}
which is a version of 

\begin{equation*}
  Ag = \condexpat {M \otimes M} {\condexpat {M \otimes M} g {\mathcal S}} {\mathcal I} - \condexpat {M \otimes M}{g}{\mathcal S} = \condexpat {M \otimes M} {g} {\mathcal I} - \condexpat {M \otimes M}{g}{\mathcal S} \ ,
\end{equation*}
where $\mathcal I \subset \mathcal S$ is the $\sigma$-algebra generated by the \emph{collision invariants} $(v,w) \mapsto (v+w,\collinv v w)$.

As constant random variables are in the kernel of the operator $A$, we assume $\expectat f g = 0$. Analogously, as the kernel of the operator contains all symmetric random variables, we could always assume that $g$ is anti-symmetric.

The nonlinear operator $f \mapsto \expectat {b \cdot f \otimes f}{Ag}$ is the \emph{Maxwell's weak form} of the Boltzmann operator, $g$ being a test function. 

\begin{proposition}
\label{prop:maxwell}
Given a proper interaction $b$ and a density $f \in \maxexp M$, the linear map 

\begin{equation*}
  L_0^{\cosh-1}(M) \ni g \mapsto \expectat {b \cdot f \otimes f} {Ag}
\end{equation*}
is continuous.
\begin{enumerate}
\item \label{item:maxwell1}
It can be represented in the duality $L^{(\cosh-1)_*}(f) \times L^{\cosh-1}(f)$ by

 \begin{align*}
\expectat {b \cdot f \otimes f} {Ag} &= \expectat {M \otimes M}{b \left(\condexpat {M \otimes M} {\frac f M \otimes \frac f M} {\mathcal I} - \frac f M \otimes \frac f M\right) g}\\
   &= \scalarat f {Q(f)/f} g \ ,
  \end{align*}
where $Q$ is the \emph{Boltzmann operator} with interaction $b$,

\begin{equation*}
  Q(f)( v)=\int_{\reals^3}\int_{\Sd} \left(f \otimes f \left(\hat A_\sigma(v,w)\right)-f(v)f(w) \right) b(v,w) \ \mu(d\sigma)\ dw 
\end{equation*}
\item \label{item:maxwell2}
Especially, if $f = \euler^{U - K_M(U)} M$ and we take $g = \logof{\frac f M}$, then

  \begin{equation*}
    \expectat {b \cdot f\otimes f} {A\logof{\frac{f}{M}}} = \scalarat f {Q(f)/f} U \ .
  \end{equation*}
\end{enumerate}
\end{proposition}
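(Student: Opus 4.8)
The plan is to reduce everything to three facts already available: the conditional expectations $\condexpat {M\otimes M}{\cdot}{\mathcal S}$ and $\condexpat {M\otimes M}{\cdot}{\mathcal I}$ are contractions on every Orlicz and every Lebesgue space over $M\otimes M$, by Jensen's inequality; a proper interaction $b$ satisfies $b\cdot f\otimes f\smile M\otimes M$ by definition, so by Proposition \ref{pr:misc}.(\ref{pr:misc-1}) and the Portmanteau Theorem \ref{prop:maxexp-pormanteau}.(\ref{prop:maxexp-pormanteau-6}) the density $\frac{b\cdot f\otimes f}{M\otimes M}$ lies in $L^{(\cosh-1)_*}(M\otimes M)$ and even in $L^{1+\delta}(M\otimes M)$ for some $\delta>0$; and Proposition \ref{prop:conditioning} expresses the $\sigma$-averages as conditional expectations given the collision invariants.

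First I would establish continuity. Since the Luxemburg norm of a tensor factor equals its one-sided marginal norm, the symmetrisation $\widebar g=\frac12(g\oplus g)$ satisfies $\normat{\Phi,M\otimes M}{\widebar g}\le\normat{\Phi,M}{g}$; writing $Ag=\condexpat {M\otimes M}{\widebar g}{\mathcal I}-\widebar g$ by Proposition \ref{prop:conditioning}.(\ref{prop:conditioning1}) and using the contraction property of both conditional expectations gives $\normat{\Phi,M\otimes M}{Ag}\le 2\normat{\Phi,M}{g}$. Hölder's inequality for the Young pair, applied in $\expectat {b\cdot f\otimes f}{Ag}=\expectat {M\otimes M}{\frac{b\cdot f\otimes f}{M\otimes M}\,Ag}$ to the factors $\frac{b\cdot f\otimes f}{M\otimes M}\in L^{(\cosh-1)_*}(M\otimes M)$ and $Ag\in L^{\cosh-1}(M\otimes M)$, then bounds $\absoluteval{\expectat {b\cdot f\otimes f}{Ag}}$ by a constant times $\normat{\Phi,M}{g}$.

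Next, for item \ref{item:maxwell1}, I would put $\phi=f/M$, so that $\expectat {b\cdot f\otimes f}{Ag}=\expectat {M\otimes M}{b\,(\phi\otimes\phi)\,Ag}$, and split $Ag=\condexpat {M\otimes M}{\widebar g}{\mathcal I}-\widebar g$. In the first summand $b\,(\phi\otimes\phi)=\frac{b\cdot f\otimes f}{M\otimes M}\in L^{(\cosh-1)_*}(M\otimes M)$ and $\widebar g\in L^{\cosh-1}(M\otimes M)$ form a conjugate pair, so I may transfer the conditioning onto the density by self-adjointness; as $b$ depends on the collision invariants only it is $\mathcal I$-measurable, so $\condexpat {M\otimes M}{b\,(\phi\otimes\phi)}{\mathcal I}=b\,\condexpat {M\otimes M}{\phi\otimes\phi}{\mathcal I}$, which is symmetric, exactly as $b\,(\phi\otimes\phi)$ is in the second summand; the symmetry of $M\otimes M$ (equivalently, $\widebar g=\condexpat {M\otimes M}{g\otimes 1}{\mathcal S}$) then lets me replace $\widebar g$ by $g\otimes 1$ in both summands, and subtracting produces the first displayed equality. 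For the second, I would insert $\condexpat {M\otimes M}{\phi\otimes\phi}{\mathcal I}(v,w)=\integrald {\Sd}{(\phi\otimes\phi)\big(\hat A_\sigma(v,w)\big)}{\mu(d\sigma)}$ from Proposition \ref{prop:conditioning}.(\ref{prop:conditioning1}), unfold $\expectat {M\otimes M}{\cdot}$ as a Lebesgue integral on $(\reals^3)^2$, and use that $\hat A_\sigma$ preserves $\collinv v w$, hence $M(v_\sigma)M(w_\sigma)=M(v)M(w)$, so that $(\phi\otimes\phi)\big(\hat A_\sigma(v,w)\big)M(v)M(w)=(f\otimes f)\big(\hat A_\sigma(v,w)\big)$ while $\phi(v)\phi(w)M(v)M(w)=f(v)f(w)$; a Fubini exchange then identifies the inner integral as $Q(f)(v)$, so the whole expression equals $\integrald {\reals^3}{Q(f)(v)g(v)}{dv}=\expectat f{(Q(f)/f)g}=\scalarat f{Q(f)/f}g$. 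Finally, that $Q(f)/f$ is a bona fide element of $L^{(\cosh-1)_*}(f)$ I would deduce from the boundedness just established together with the associate-space characterisation of $L^{(\cosh-1)_*}(f)$ relative to the measure $f\,dv$.

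Item \ref{item:maxwell2} is then immediate: $\log\frac fM=U-K_M(U)$ and $A$ kills constants, so $A\log\frac fM=AU$ with $U\in\sdomain M\subset L_0^{\cosh-1}(M)$, and item \ref{item:maxwell1} applied to $g=U$ gives the claim. I expect the main obstacle to be the integrability bookkeeping underlying the self-adjointness step and the Fubini exchange, where every product has to stay in $L^1$: this should be controlled by $b\cdot f\otimes f\smile M\otimes M$ and $f\otimes f\smile M\otimes M$, which place $b\,(\phi\otimes\phi)$ and $\phi\otimes\phi$ --- and hence, by contraction of conditional expectation, $b\,\condexpat {M\otimes M}{\phi\otimes\phi}{\mathcal I}$ --- in $L^{1+\delta}(M\otimes M)$, together with Theorem \ref{theo:embed} which puts $g\otimes 1\in\bigcap_{1<r<\infty}L^{r}(M\otimes M)$ into the conjugate Lebesgue exponent. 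The normalisation suppressed in the notation $\expectat {b\cdot f\otimes f}{\cdot}$ is an immaterial positive scalar throughout.
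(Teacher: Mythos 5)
Your proposal is correct and follows essentially the same route as the paper's own (very terse) proof: continuity from the Portmanteau theorem plus the contraction property of conditional expectation, item \ref{item:maxwell1} from the self-adjointness (projection) property of conditioning together with Orlicz duality, and item \ref{item:maxwell2} as a special case since $A$ annihilates constants. Your version simply supplies the integrability bookkeeping and the symmetrisation step that the paper leaves implicit, and these details all check out.
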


\begin{proof}
  The continuity follows from the Portmanteau theorem and the continuity of the conditional expectation. Item \ref{item:maxwell1} follows from the projection properties of the conditional expectation and from general properties of Orlicz spaces. Item \ref{item:maxwell2} is a special case of the previous one.
\end{proof}
It follows from the previous theorem and from the discussion in \cite[Prop. 10]{MR3130268} that $f \mapsto Q(f)/f$ is a vector field in the cotangent bundle $\mTof M$ and its flow

\begin{equation*}
  \frac D {dt} f_t = \frac {\dot f_t}{f_t} = \frac {Q(f_t)}{f_t} 
\end{equation*}
is equivalent to the standard Boltzmann equation $\dot f_t = Q(t)$. We do not discuss in this paper the implications of that presentation of the Boltzmann equation to the existence properties. We turn our attention to the comparison of the Boltzmann field to the gradient field of the entropy. 

\subsection{Entropy generation} 
\label{sec:entgen}

If $t \mapsto p(t)$ is a curve in $\maxexp M$,  the entropy of $p(t)$ is defined for all $t$ and the variation of the entropy along the curve is computed as

\begin{multline*}
  \derivby t H(p_t) = - \int \derivby t \left( p(v;t) \log p(v;t) \right) \ dv =
\\ - \int \left( \log p(v;t) - 1 \right) \derivby t p(v;t) \ dv = - \int \log p(v;t) \derivby t p(v;t) \ dv \ .
\end{multline*}
In our setup, the computation takes the following form. Let $t \mapsto p_t$ be a differentiable curve in $\maxexp M$ with velocity $t \mapsto D p_t \in \mBspace {p_t}$. As the gradient $\nabla H$ is given at $p$ by $(\nabla H)(p) = -(\log p + H(p)) \in B_p$, we have 

\begin{equation*}
   \derivby t H(p(t)) = - \scalarat {p(t)} {\log p(t) + H(p(t))} {D p(t)} \ . 
\end{equation*}
In particular, if $Dp(t) = \dot p(v;t)/p(v;t)$ we recover the previous computation:

\begin{align*}
  \derivby t H(p(t)) &= 
- \int \left(\log p(t) + H(p(t))\right) \frac {\dot p(v;t)}{p(v;t)} \ p(v;t) dv \\ &= - \int \log p(v;t) \derivby t p(v;t) \ dv \ . 
\end{align*}
Assume now that $(p,t) \mapsto \gamma(t;p)$ is the flow of a vector field $F \colon \maxexp M \to \mTof M$. Then

\begin{align*}
  \derivby t H(\gamma(t;p)) &= - \scalarat {\gamma(t;p)} {\log \gamma(t;p) + H(\gamma(t;p))} {D \gamma(t;p)} \\ &= -\scalarat {\gamma(t;p)} {\log \gamma(t;p) + H(\gamma(t;p))} {F(\gamma(t;p))}, 
\end{align*}
that is, for each $p \in \maxexp M$, the \emph{entropy production at $p$ along the vector field $F$} is

\begin{multline*}
  (\nabla _F H)(p) = - \scalarat p {\log p + H(p)}{F(p)} = \\ - \int \left(\log p(v) + H(p)\right) F(v,p) \ p(x)dx = - \int \log p(v)  F(v,p) \ p(v) dv \ .
\end{multline*}
In particular, if the vector field $F$ is the Boltzmann vector field, $F(f) = Q(f)/f$, we have that for each $f \in \maxexp M$ the Boltzmann's entropy production is 

\begin{multline*}
  \entropyprodat f = (\nabla _{Q(f)/f} H)(f) = - \scalarat f {	\log f + H(f)}{Q(f)/f} \\ = - \expectat {b \cdot f \otimes f} {A \log f} = - \frac12 \expectat {b \cdot f \otimes f} {A \left(\log f \ominus \log f\right)} \ ,
\end{multline*}
where

\begin{equation*}
  \left(\log f \ominus \log f\right)(v,w) = \log f(v) - \log f(w) = \log \frac {f(v)}{f(w)} \ . 
\end{equation*}
We recover a well-known formula for the entropy production of the Boltzmann operator \cite{MR1942465}. We now proceed to compute the covariant derivative of the entropy production.
\begin{proposition}\ 

Let $X$ be a vector field of $\eTof M$ and let $F$ be a vector field of $\mTof M$.
\begin{enumerate}
\item The Hessian of the entropy (in the exponential connection) is

  \begin{equation*}
    D_X\nabla H(f) = - X.
  \end{equation*}
\item 
The covariant derivative of the entropy production along $F$ is

\begin{equation*}
D_X \entropyprod = -\scalarof X F + \scalarof {\nabla H}{D_X F}. 
\end{equation*}
\end{enumerate}
\end{proposition}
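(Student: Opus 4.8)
The plan is to prove (1) by computing the chart representation of the gradient vector field $\nabla H$, and then to deduce (2) in one line from the Leibniz rule for the duality coupling (Proposition \ref{prop:der.duality}). For part (1), note first that under the standing assumption $\log p \in \Lexp p$ of this section $\nabla H(q) = -(\log q + H(q))$ satisfies $\expectat q{\nabla H(q)} = \expectat q{\log q} + H(q) = 0$, so $\nabla H$ is a bona fide section of the tangent bundle $\eTof M$, and $D_X\nabla H$ is its covariant derivative for the (flat) exponential connection, the one for which the atlas $\set{s_p}$ consists of affine charts. I would fix a chart center $p$, put $q = e_p(u)$ so that $\log q = u + \log p - K_p(u)$, and use the tangent-bundle chart $(q,w)\mapsto(s_p(q),\etransport q p w)$ to write $\nabla H$ in the chart at $p$:
\begin{equation*}
  (\nabla H)_p(u) = \etransport {e_p(u)} p {\nabla H(e_p(u))} = -\bigl(\log e_p(u) - \expectat p {\log e_p(u)}\bigr) = -\bigl(u + \log p + H(p)\bigr),
\end{equation*}
the scalar $H(e_p(u))$ being annihilated by the centering $\etransport{e_p(u)}{p}$ and $\expectat p u = 0$. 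Since this representation is affine in $u$, its Fréchet derivative is the constant map $w\mapsto -w$; as the exponential connection has vanishing connection coefficients in the charts $s_p$, the field $D_X\nabla H$ is represented in every such chart by $w\mapsto -X_p(u)$, i.e. $D_X\nabla H = -X$.

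For part (2), observe that by the definition recalled in Subsection \ref{sec:entgen} the entropy production along $F$ is $\entropyprod = \scalarof{\nabla H}{F}$, the duality coupling of the tangent field $\nabla H \in \eTof M$ with the pretangent field $F \in \mTof M$ — well defined because $\nabla H(p) \in \Lexp p$ and $F(p) \in \LlogL p$ are paired by Hölder's inequality. The affine chart representation found in part (1) shows that $\nabla H$ is $C^\infty$, so Proposition \ref{prop:der.duality} applies and gives $D_X\scalarof{\nabla H}{F} = \scalarof{D_X\nabla H}{F} + \scalarof{\nabla H}{D_X F}$; inserting $D_X\nabla H = -X$ from part (1) yields $D_X\entropyprod = -\scalarof X F + \scalarof{\nabla H}{D_X F}$.

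The one point that needs care is the choice of connection in part (1): $\nabla H$ must be differentiated as a section of $\eTof M$ with the exponential connection. If one instead treated it as a section of $\mTof M$ and applied the mixture covariant derivative, the computation would acquire an extra centered-product term $X(q)\,\nabla H(q) - \expectat q{X(q)\,\nabla H(q)}$ (a third-cumulant correction) and would not collapse to $-X$. Once this is settled, part (1) is an immediate computation and part (2) is a formal consequence of Proposition \ref{prop:der.duality} together with part (1).
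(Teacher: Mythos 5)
Your proposal is correct and follows essentially the same route as the paper: both compute the representation of $\nabla H$ in the chart at $p$, find it is the affine map $u \mapsto \nabla H(p) - u$ (your derivation via $\log e_p(u) = u + \log p - K_p(u)$ and the annihilation of constants by $\etransport {e_p(u)} p$ is just a slightly more direct version of the paper's computation through $H_p(U)$), conclude $D_X\nabla H = -X$, and then obtain part (2) by applying Proposition \ref{prop:der.duality} to $\entropyprod = \scalarof{\nabla H}{F}$. Your closing remark that $\nabla H$ must be differentiated as a section of $\eTof M$ with the exponential connection, not of $\mTof M$, is a correct and worthwhile clarification consistent with how the duality coupling is transported in the proof of Proposition \ref{prop:der.duality}.
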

\begin{proof}
We note that the entropy production along a vector field $F$, $\entropyprod = (\nabla _F H)(p) = \scalarof {\nabla H}{F}$, is a function of the duality coupling of $\eTof M \times \mTof M$, so that we can apply Prop. \ref{prop:der.duality} to compute its covariant derivative along $X$ as 

\begin{equation*}
  D_X \entropyprod = \scalarof {D_X \nabla H} F + \scalarof {\nabla H} {D_X F}.  \end{equation*} 
The first term at $p$ is

\begin{equation*}
  \scalarof {D_X \nabla H} F (p) = \scalarat p {D_X \nabla H(p)} {F(p)}.
\end{equation*}
Let us compute $D_X \nabla H(p)$, which is the Hessian of the entropy in the exponential connection. First, we compute the expression of $\nabla H(q) = - \left(\log q + H(q)\right) \in \eBspace q$, $q \in \maxexp M$ in the chart centered at $p$. We have

\begin{equation*}
- \log q = K_p(U) - U - \log p, \quad U \in \sdomain p, q = e_p(U),  
\end{equation*}
and

\begin{equation*}
 H_p(U) = - \expectat q {\log q} = K_p(U) - dK_p(U)[U] + dK_p(U)[\nabla H(p)] + H(p)
\end{equation*}
so that

\begin{equation*}
  (\nabla H)\circ e_p(U) = -U + dK_p(U)[U] + \nabla H(p) - dK_p(U)[\nabla H(p)] 
\end{equation*}
and, finally, the expression of $\nabla H$ in the chart centered at $p$ is

\begin{equation*}
  (\nabla H)_p(U) = \etransport {e_p(U)} p (\nabla H)\circ e_p(U) = \nabla H(p) - U.
\end{equation*}
Note that this function is affine, and its derivative in the direction $X(p)$ is $d (\nabla H)_p(U)[X(p)] = -X(p)$. It follows that $D_X\nabla H = -X$.
\end{proof}
The application of this computation to the Boltzmann field i.e. $F(f) = Q(f)/f$ requires the existence of the covariant derivative of the Boltzmann operator. We leave this discussion as a research plan.

\section{Weighted Orlicz-Sobolev model space}
\label{sec:orlicz-sobolev}

We show in this section that the Information Geometry formalism described in Sections \ref{sec:model-spaces} and \ref{exponentialmanifold} is robust enough to allow to take into account differential operators (e.g., the classical Laplacian). This yields naturally to the introduction of weighted Orlicz-Sobolev spaces. While the case ``without derivative'' studied in Section \ref{exponentialmanifold} was well-suited for the study of the fine properties of the Kullback-Leibler divergence, we illustrate in Sec. \ref{sec:hyvarinen} our use of Orlicz-Sobolev spaces with the fine study of the  Hyv\"arinen divergence. This is a special type of divergence between densities that involves an $L^2$-distance between gradients of densities \cite{MR2249836} which has multiple applications. In particular, it is related with the so called Fisher information as it is defined for example in \cite[p. 49]{MR2409050}, which has deep connections with Boltzmann equation, see \cite{MR1700142}. However the name Fisher information should not be used in a statistics context where   it rather refers to the expression in coordinates of the metric of statistical models considered as pseudo-Riemannian manifolds e.g., \cite{MR1800071}.

We introduce the Orlicz-Sobolev spaces with weight $M$, Maxwell density on $\reals^n$, 

\begin{align}
   W_{\cosh-1}^1(M) = \setof{f \in \Lexp M}{\partial_j f \in \Lexp M, j = 1, \dots, n}   \label{eq:OrSobExp},\\
  W_{(\cosh-1)_*}^1(M) = \setof{f \in \LlogL M}{\partial_j f \in \LlogL M, j = 1, \dots, n}   \label{eq:OrSobLog},
\end{align}
where $\partial_j$ is the derivative in the sense of distributions. They are both Banach spaces, see \cite[\S 10]{MR724434}. (The classical Adams's treatise \cite[Ch 8]{MR2424078} has Orlicz-Sobolev spaces, but does not consider the case of a weight. The product functions $(u,x) \mapsto (\cosh-1)(u)M(x)$ and $(u,x) \mapsto (\cosh-1)_*(u)M(x)$ are $\phi$-functions according the Musielak's definition.) The norm on $W_{\cosh-1}^{1}(M)$ is

\begin{equation}
  \label{eq:OrSob-norm}
  \normat {W^{1}_{\cosh-1}(M)} f = \normat {\Lexp M} f + \sum_{j=1}^n \normat {\Lexp M} {\partial_j f},
\end{equation}
and similarly for $W^{1}_{(\cosh-1)_{*}}(M)$. One begins with a first technical result in order to relate such spaces with statistical exponential families:

\begin{proposition}\label{prop:feuler} Given $u \in \Sspace M \cap W^{1}_{\cosh-1}(M)$ and $f\in W^{1}_{\cosh-1}(M)$, one has

$$f\euler^{u-K_M(u)} \in W^{1}_{(\cosh-1)_{*}}(M).$$
\end{proposition}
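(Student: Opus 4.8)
The plan is to show the claimed membership directly from the definitions in \eqref{eq:OrSobExp} and \eqref{eq:OrSobLog}, proceeding in two stages: first establish that $f\euler^{u-K_M(u)} \in \LlogL M$, and then that its distributional partial derivatives also lie in $\LlogL M$. Throughout, write $g = \euler^{u-K_M(u)}$, the density of $e_M(u) \in \maxexp M$, so that $g\cdot M$ is a probability density connected to $M$ by an open exponential arc.

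For the zeroth-order part, I would argue as follows. Since $u \in \Sspace M$, Proposition~\ref{prop:expisanalytic} (applied with a suitable $a$, using that $\Sspace M$ is the interior of the proper domain of $K_M$, and scaling $u$ into the open unit ball) gives $g = \euler^{u - K_M(u)} \in L^a(M)$ for every $a \in [1,\infty)$; equivalently, $g/M \in \bigcap_{a} L^a(M)$ — indeed this is essentially item \eqref{pr:misc-1} of Proposition~\ref{pr:misc} together with the Portmanteau Theorem~\ref{prop:maxexp-pormanteau}, which also tells us $L^\Phi(M) = L^\Phi(gM)$. Meanwhile $f \in W^1_{\cosh-1}(M) \subset \Lexp M \subset L^r(M)$ for every $1 < r < \infty$ by Theorem~\ref{theo:embed}. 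Taking $r$ and $a$ conjugate exponents, Hölder's inequality gives $fg \in L^s(M)$ for any $s$ slightly larger than $1$; since $L^s(M) \hookrightarrow \LlogL M$ for $s > 1$ by Theorem~\ref{theo:embed} again, we conclude $fg \in \LlogL M$.

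For the first-order part, the key computation is the product (Leibniz) rule for distributional derivatives: $\partial_j(fg) = (\partial_j f)\, g + f\, (\partial_j g)$, and since $g = \euler^{u-K_M(u)}$ one has $\partial_j g = (\partial_j u)\, g$ in the sense of distributions (this should be justified carefully — $g$ is an $L^a$-valued analytic function of $u$ and $\partial_j u \in \Lexp M$ by hypothesis, so $\partial_j u \cdot g \in L^s(M) \subset L^1_{loc}$ and the chain rule is legitimate; alternatively one can mollify, differentiate the smooth approximations, and pass to the limit). Thus $\partial_j(fg) = \left((\partial_j f) + f\,(\partial_j u)\right) g$. Now $\partial_j f \in \Lexp M$ directly from $f \in W^1_{\cosh-1}(M)$, while $f\,(\partial_j u)$ is a product of two elements of $\Lexp M$, hence lies in $\bigcap_{1<r<\infty} L^r(M)$ by Lemma~\ref{lem:technical}; in either case $(\partial_j f) + f\,(\partial_j u) \in L^r(M)$ for all finite $r$. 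Multiplying by $g \in \bigcap_a L^a(M)$ and using Hölder exactly as in the zeroth-order step gives $\partial_j(fg) \in L^s(M) \subset \LlogL M$ for $s$ near $1$. Combining with the previous paragraph, $fg \in \LlogL M$ and $\partial_j(fg) \in \LlogL M$ for each $j$, which is precisely $f\euler^{u-K_M(u)} \in W^1_{(\cosh-1)_*}(M)$.

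The main obstacle I anticipate is the rigorous justification of $\partial_j\big(\euler^{u-K_M(u)}\big) = (\partial_j u)\,\euler^{u-K_M(u)}$ as a distributional identity, since $u$ is only an Orlicz-space element with an Orlicz-space distributional derivative, not a classical function; one must check that the formal Leibniz/chain rule survives, which is where the analyticity of the superposition map in Proposition~\ref{prop:expisanalytic} and the integrability bookkeeping (ensuring every product lands in some $L^s$ with $s>1$, so that everything is genuinely locally integrable and the mollification argument closes) do the real work. Everything else is a routine application of Theorem~\ref{theo:embed}, Lemma~\ref{lem:technical}, and Hölder's inequality.
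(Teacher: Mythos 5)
Your overall route is the same as the paper's: split the claim into the zeroth-order statement $f\euler^{u-K_M(u)}\in\LlogL M$ and the first-order one, obtain the latter from the Leibniz rule together with $\partial_j\euler^{u-K_M(u)}=(\partial_j u)\,\euler^{u-K_M(u)}$, and reduce everything to integrability of products. The only real difference of method is the final step: you place each product in $L^s(M)$ for some $s>1$ by H\"older and invoke the embedding $L^s(M)\hookrightarrow\LlogL M$ of Theorem~\ref{theo:embed}, whereas the paper applies the pointwise Young inequality and the monotonicity and convexity of $\Phi_*$ to bound $\Phi_*(fG)$ by $\frac1p\Phi_*(\absoluteval{f}^p)+\frac1q\Phi_*(G^q)$ with $G=\euler^{u-K_M(u)}$. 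Both mechanisms close, and yours is arguably the more transparent. The distributional identity $\partial_j(fG)=G\partial_jf+fG\partial_ju$ that you flag as the main obstacle is simply asserted ``a.e.'' in the paper as well (cf.\ Remark~\ref{rem:wheref}), so you are not missing a trick the paper supplies.

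There is, however, one false intermediate claim: that $\euler^{u-K_M(u)}\in L^a(M)$ for \emph{every} $a\in[1,\infty)$. Since $\expectat M{\euler^{a(u-K_M(u))}}=\euler^{K_M(au)-aK_M(u)}$, this would require $au\in\mathrm{dom}(K_M)$ for all $a\ge 1$, and the proper domain of $K_M$ is convex but not a cone. For instance, with $n=1$ and $u$ the $M$-centered version of $\theta x^2$ with $\theta<1/2$ close to $1/2$, one has $u\in\Sspace M$ but $\euler^{u}\in L^a(M)$ only for $a<1/(2\theta)$. Rescaling $u$ into the unit ball before applying Proposition~\ref{prop:expisanalytic} only yields $\euler^{u/\lambda}\in L^1(M)$ for large $\lambda$, which goes the wrong way. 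The repair is immediate and is exactly what the paper uses: Theorem~\ref{prop:maxexp-pormanteau}.(\ref{prop:maxexp-pormanteau-6}) gives $\euler^{u-K_M(u)}\in L^{1+\varepsilon}(M)$ for \emph{some} $\varepsilon>0$, and since $f$, $\partial_jf$ and $f\,\partial_ju$ all lie in $L^r(M)$ for every finite $r$ (Theorem~\ref{theo:embed} and Lemma~\ref{lem:technical}), H\"older still places each product in $L^s(M)$ for some $s\in(1,1+\varepsilon)$, which is all your argument needs. With that single correction the proof is sound.
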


\begin{proof} For simplicity, set $G=\euler^{u-K_M(u)}.$ One knows from the Portmanteau Theorem \ref{prop:maxexp-pormanteau} that $G\,M \in \maxexp M$ and therefore, there exists $\varepsilon > 0$ such that $G \in L^{1+\varepsilon}(M).$ Let us prove that $fG \in \LlogL M.$ First of all, since $L^{1+\varepsilon}(M) \subset \LlogL M$ and $f \in \Lexp M$ one has $fG \in L^{1}(M)$. Moreover, for any $x \in \reals^{n}$, according to classical Young's inequality

$$\ f(x)G(x) \leq \frac{1}{p}|f(x)|^{p} + \frac{1}{q}|G(x)|^{q} \qquad \forall p > 1, \:\:\frac{1}{p}+\frac{1}{q}=1.$$
Since $\Phi_{*}$ is increasing and convex

$$\Phi_{*}(f(x)G(x)) \leq \frac{1}{p}\Phi_{*}(\absoluteval{f(x)}^{p}) + \frac{1}{q} \Phi_{*}\left( G(x)^{q}\right) \qquad x \in \reals^{n}.$$
Now, since $f \in L^{\Phi}(M)$, one has $|f|^{p} \in \LlogL M$ for all $p > 1$, i.e. $\Phi_{*}(\absoluteval{f}^{p}) \in L^{1}(M)$ for all $p>1.$ 
Choosing then $1<q< 1+\varepsilon$ one has $G^{q} \in L^{\frac{1+\varepsilon}{q}}(M) \subset \LlogL M$   so that $\Phi_{*}(G^{q}) \in L^{1}(M)$. This proves that $\Phi_{*}(fG) \in L^{1}(M)$ i.e.

$$fG\in \LlogL M.$$
In the same way, since $f \in W^{1}_{\cosh-1}(M)$ one also has

$$G\partial_{j}f \in  \LlogL M \qquad \forall j=1,\ldots,n.$$
Moreover, $u \in W^{1}_{\cosh-1}(M)$ so that, for any $j=1,\ldots,n$, $G\partial_{j}u \in L^{r}(M)$ for any $r >1$ and therefore $\Phi_{*}(\absoluteval{G\partial_{j} u}^{p}) \in L^{1}(M)$ for any $p > 1$. Repeating the above argument we get therefore

$$fG\partial_{j}u  \in \LlogL M \qquad \forall j=1,\ldots,n.$$
Since $\partial_{j}(fG)=G\partial_{j}f + Gf\partial_{j}u$ a.e., one gets $\partial_{j}(fG)\in \LlogL M$ for any $j=1,\ldots,n$ which proves the result. 
\end{proof}

\begin{remark}
\label{rem:wheref}
 As a particular case of the above Proposition, if $u \in \Sspace M \cap W^{1}_{\cosh-1}(M)$ then

$$\euler^{u-K_{M}(u)} \in W^{1}_{(\cosh-1)_{*}}(M) \qquad \text{ with } \quad \bnabla \euler^{u-K_M(u)} = \bnabla u\, \euler^{u-K_M(u)}.$$
\end{remark}

The Orlicz-Sobolev spaces $W^{1}_{\cosh-1}(M)$ and $W^{1}_{(\cosh-1)_{*}}(M)$, as defined in Eq. \eqref{eq:OrSobExp} and \eqref{eq:OrSobExp} respectively, are instances of Gaussian spaces of random variables and they inherit from the corresponding Orlicz spaces a duality form. In this duality the adjoint of the partial derivative has a special form coming from the form of the weight $M$ see e.g. \cite[Ch. V]{MR1335234}. We have the following:

\begin{proposition}\ 
  \begin{enumerate}
  \item Let $f \in C_0^\infty(\reals^n)$ and $g \in W_{\cosh-1}^1(M)$. Then

    \begin{equation}\label{eq:stein}
      \scalarat M f{\partial_j g} = \scalarat M {X_j f - \partial_j f}{g}
    \end{equation}
where $X_{j}$ is the mutliplication operator by the $j$-th coordinate $x_{j}$. 
\item If $f \in W_{(\cosh-1)_*}^{1}(M)$, then $X_jf \in \LlogL M$. More precisely, there exists $C >0$ such that

\begin{equation}\label{eq:Xjf}\normat {\LlogL M} {X_{j}f} \leq C \normat {W^{1}_{(\cosh-1)_{*}}(M)} f \qquad \forall f \in W^{1}_{(\cosh-1)_{*}(M)}.\end{equation}
\item If $f \in W_{(\cosh-1)_*}^{1}(M)$ and $g \in W_{\cosh-1}^1(M)$, then \eqref{eq:stein} holds.
  \end{enumerate}
  
\end{proposition}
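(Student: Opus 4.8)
The plan is to establish the three assertions in order, the first being an integration-by-parts computation peculiar to the Gaussian weight, the second a mapping property of the multiplication operator $X_j$, and the third a density/approximation argument combining the first two.

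\textbf{Part (1).} Since $f \in C_0^\infty(\reals^n)$ and $M$ is the standard Gaussian, I would write $\scalarat M f{\partial_j g} = \int (\partial_j g)(x) f(x) M(x)\ dx$ and integrate by parts in the $x_j$ variable. The boundary terms vanish because $f$ has compact support. One gets $-\int g\, \partial_j(f M)\ dx = -\int g\,(\partial_j f)\, M\ dx - \int g f\, (\partial_j M)\ dx$. Now the crucial Gaussian identity is $\partial_j M(x) = -x_j M(x)$, so the second integral equals $+\int g\, (x_j f)\, M\ dx = \scalarat M {X_j f}{g}$. Collecting terms yields $\scalarat M f{\partial_j g} = \scalarat M {X_j f - \partial_j f}{g}$, which is \eqref{eq:stein}. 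One should check the pairings make sense: $g, \partial_j g \in \Lexp M$ by hypothesis, and $f, \partial_j f, X_j f$ are all bounded with compact support, hence in $L^\infty(M) \subset \LlogL M$, so the Hölder inequality for the Young pair guarantees finiteness throughout.

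\textbf{Part (2).} I would show that multiplication by $x_j$ maps $W^1_{(\cosh-1)_*}(M)$ continuously into $\LlogL M$. The natural route is again the Gaussian structure: $X_j = -\partial_j + (\text{adjoint of }\partial_j \text{ on } L^2(M))$ heuristically, but more concretely, for $f$ smooth one has $x_j f M = -\partial_j(fM) + (\partial_j f) M$, i.e. $x_j f = -\partial_j f - f\,\partial_j(\log M)^{-1}\cdots$ — rather, cleaner: from $\partial_j(fM) = (\partial_j f)M - x_j f M$ we read off $x_j f = \partial_j f - \frac{\partial_j(fM)}{M}$, which is not yet an estimate. Instead I would argue directly: it suffices to bound $\normat{\LlogL M}{X_j f}$ by $C(\normat{\LlogL M}{f} + \normat{\LlogL M}{\partial_j f})$. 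This is the content of a weighted Poincaré/Stein-type inequality for the Ornstein–Uhlenbeck structure; it can be obtained by testing the identity from Part (1) (extended by density to $f \in W^1_{(\cosh-1)_*}(M)$, $g \in W^1_{\cosh-1}(M)$, which is precisely Part (3) — so there is a mild circularity to be broken) against $g$ ranging over the unit ball of $L^{\cosh-1}(M)$ and using that $L^{\cosh-1}(M)$ is the dual of $L^{(\cosh-1)_*}(M)$. Concretely, $\absoluteval{\scalarat M {X_j f}{g}} \le \absoluteval{\scalarat M {\partial_j f}{g}} + \absoluteval{\scalarat M {f}{\partial_j g}}$ for $g \in C_0^\infty$; bounding $\partial_j g$ in the Orlicz norm is not possible for $g$ merely in $L^{\cosh-1}(M)$, so this approach forces the pairing to be taken against $g \in W^1_{\cosh-1}(M)$ only. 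The honest path is therefore to prove \eqref{eq:Xjf} first by a direct Orlicz-space estimate: writing $\Phi_*(x_j f) $ and using $\absoluteval{x_j} \le \frac12(1 + x_j^2)$ together with $\Phi_*(\tfrac12(a+b)) \le \tfrac12\Phi_*(a) + \tfrac12\Phi_*(b)$ won't linearize the product $x_j f$. Instead, I would split $\reals^n = \{\absoluteval{x_j} \le R\} \cup \{\absoluteval{x_j} > R\}$; on the first set $\absoluteval{x_j f} \le R\absoluteval f$ so $\Phi_*(x_j f) \le \max(1,R^2)\Phi_*(f)$ by the $\Delta_2$ bound stated in Section \ref{sec:model-spaces}; on the tail one uses that $f$ and $\partial_j f$ are in $\LlogL M$ together with the Gaussian decay of $M$ against the polynomial $x_j^2$ via a Hardy-type inequality on the half-line — the one-dimensional estimate $\int_R^\infty x^2 h(x)^2 e^{-x^2/2}\,dx \le C\int_R^\infty (h(x)^2 + h'(x)^2)e^{-x^2/2}\,dx$ integrated over the remaining coordinates. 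This tail estimate is the technical heart.

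\textbf{Part (3).} With \eqref{eq:Xjf} in hand, I would extend \eqref{eq:stein} from $f \in C_0^\infty(\reals^n)$ to $f \in W^1_{(\cosh-1)_*}(M)$ by density: $C_0^\infty(\reals^n)$ is dense in $W^1_{(\cosh-1)_*}(M)$ because $\Phi_*$ satisfies the $\Delta_2$-condition (so the Orlicz-Sobolev space is the closure of smooth functions), and each of the three bilinear forms $f \mapsto \scalarat M f {\partial_j g}$, $f \mapsto \scalarat M {\partial_j f}{g}$, $f \mapsto \scalarat M {X_j f}{g}$ is continuous on $W^1_{(\cosh-1)_*}(M)$ for fixed $g \in W^1_{\cosh-1}(M)$ — the first two by the Hölder inequality for the Young pair and the definition of the Orlicz-Sobolev norm, and the third precisely by \eqref{eq:Xjf}. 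Passing to the limit along a sequence $f_k \to f$ in $W^1_{(\cosh-1)_*}(M)$ with $f_k \in C_0^\infty$ gives \eqref{eq:stein} for all such $f$. The main obstacle, as flagged, is the tail/Hardy estimate underlying \eqref{eq:Xjf}: one must control the unbounded multiplier $x_j$ using only one distributional derivative, and this genuinely uses the exponential decay of the Gaussian weight rather than any soft functional-analytic fact; everything else is bookkeeping with Young's and Hölder's inequalities and the $\Delta_2$-property already recorded in Section \ref{sec:model-spaces}.
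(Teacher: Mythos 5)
Parts (1) and (3) of your proposal are exactly the paper's argument: Gaussian integration by parts with $\partial_j M = -x_j M$ and vanishing boundary terms for part (1), and density of $C_0^\infty(\reals^n)$ in $W^1_{(\cosh-1)_*}(M)$ (via the $\Delta_2$-condition for $\Phi_*$) combined with the continuity supplied by \eqref{eq:Xjf} for part (3). You also correctly identify and discard the circular route to part (2) through part (3). The problem is the tail estimate you put at the ``technical heart'' of part (2). The inequality you invoke, $\int_R^\infty x^2 h(x)^2 e^{-x^2/2}\,dx \le C\int_R^\infty (h(x)^2+h'(x)^2)e^{-x^2/2}\,dx$, is an $L^2$-weighted Hardy/Poincar\'e inequality, but the hypothesis only gives $f,\partial_j f\in\LlogL M$, and the embedding of Theorem \ref{theo:embed} runs $L^2(M)\hookrightarrow\LlogL M$, not the other way: a function in $\LlogL M$ need not be square-integrable against $M$. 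So even after majorizing $\Phi_*(x_jf)$ pointwise by $x_j^2f^2/2$, the right-hand side of your tail bound can be $+\infty$, and the argument collapses. The estimate must be carried out at the level of the modular $\Phi_*$ itself, using only that $\Phi_*(f)$ and $\Phi_*(\partial_jf)$ are integrable; no soft reduction to $L^2$ is available.

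The paper's actual route is quite different. After reducing to $n=1$ by tensorization and observing $X_jf\in L^1(M)$ by H\"older (since $x_j\in\Lexp M$), it writes $\Phi_*(yF(y)) = y\,G(y)$ with $G(y)=\int_0^{\absoluteval{F(y)}}\arsinh(yv)\,dv$, and integrates by parts against the Gaussian identity $yM_1(y)=-M_1'(y)$ to obtain $\int_\reals\Phi_*(yF(y))M_1(y)\,dy=\int_\reals G'(y)M_1(y)\,dy$. Young's inequality for the pair $(\Phi,\Phi_*)$ then splits $G'(y)$ into three integrable pieces: $\Phi(\arsinh(y\absoluteval{F(y)}))=\sqrt{1+y^2F(y)^2}\le\sqrt2\,(1\vee\absoluteval{yF(y)})$, controlled by $yF\in L^1(M_1)$; $\Phi_*\bigl(\tfrac{d}{dy}\absoluteval{F(y)}\bigr)$, controlled by hypothesis; and $(\sqrt{1+y^2F(y)^2}-1)/y^2$, which is split at $\absoluteval y=r$ --- and here the Hardy inequality (in its Orlicz-Sobolev form, citing Cianchi) is applied \emph{near the origin}, where the factor $1/\absoluteval y$ is singular, while the tail $\absoluteval y>r$ is trivially bounded by $r^{-1}\int\Phi_*(F)M_1$. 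So both the functional being estimated and the region where the Hardy inequality is needed are the opposite of what you propose; supplying this modular-level computation is what is missing from your part (2).
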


\begin{proof}
  \begin{enumerate}
  \item As $fM \in C_0^\infty(\reals^{n})$, we have by definition of distributional derivative,

    \begin{multline*}
      \scalarat M f{\partial_j g} = \int_{\reals^{n}} f(x) \partial_j g(x) M(x) \ dx = - \int_{\reals^{n}}
g(x) \partial_j \left( f(x) M(x) \right)\ dx = \\ \int_{\reals^{n}} \left(x_j f(x) - \partial_j f(x) \right) M(x) \  dx = \scalarat M {X_j f - \partial_j f}{g}.
\end{multline*}
\item Let us observe first that, according to Holder's inequality

$$\expectat M {\absoluteval{X_jf}} \le 2 \normat {\Lexp M} {X_j} \normat {\LlogL M} {f} < \infty,$$ i.e. $X_jf \in L^1(M)$. Since $\Phi_{*}=(\cosh-1)_{*}$ enjoys the so-called $\Delta_{2}$-condition, to prove the stronger result $X_jf \in \LlogL M$, it is enough to show that $\expectat M {\Phi_*(X_jf)} < \infty$. 
First of all, using the tensorization property of the Gaussian measure, i.e. the fact that  $M(x)=M_{1}(x_{1})\ldots\,M_{1}(x_{n})$ for any $x=(x_{1},\ldots,x_{n}) \in \reals^{n}$ where $M_{1}$ stands for the one-dimensional standard Gaussian, we claim that it is enough to prove the result for $n=1$. Indeed, given $f \in W^{1}_{(\cosh-1)_{*}}(M)$ and $x_{j} \in \reals$ $(j=1,\ldots,n)$, any $x=(x_{1},\ldots,x_{n}) \in \reals^{n}$ can be identified with $x=(x_{j},\underline{x})$ with $\underline{x} \in \reals^{n-1}$ and $x_{j}f(x)=x_{j}F_{\underline{x}}(x_{j})$ where $F_{\underline{x}}(y)=f(y,\underline{x})$ for any $\underline{x} \in \reals^{n-1}$, $y \in \reals$. We also set $M_{n-1}(\underline{x})=M(x)/M_{1}(x_{j})$. Then, for a.e. 
$\underline{x} \in \reals^{n-1}$, $F_{\underline{x}} \in W^{1}_{(\cosh-1)_{*}}(M_{1})$ with 

$$\int_{\reals^{n-1}}M_{n-1}(\underline{x}) d \underline{x}\int_{\reals}\Phi_{*}(F_{\underline{x}}(y))M_{1}(y)d y=\int_{\reals^{n}}\Phi_{*}(f(x))\,M(x)d x$$  and 

$$\int_{\reals^{n-1}}M(\underline{x})d \underline{x}\int_{\reals}M_{1}(y)\Phi_{*}(F^{'}_{\underline{x}}(y)) d y =\int_{\reals^{n}}M(x)\Phi_{*}(\partial_j f(x))\,d x$$
where $F'$ denotes the distributional derivative of $F=F(y)$. In particular, if there exists $C > 0$ such that

\begin{equation}\label{eq:1d}\int_{\reals} \Phi_{*}(yF(y))M_{1}(y)d y \leq C \int_{\reals} \left(\Phi_{*}(F(y)) + \Phi_{*}(F'(y))\right)M_{1}(y)d y \qquad \forall F \in W^{1}_{(\cosh-1)_{*}}(M_{1})\end{equation}
we get the desired result.

Let us then prove \eqref{eq:1d} and fix $F \in W^{1}_{(\cosh-1)_{*}}(M_{1})$. From $ \Phi_*(y) = \displaystyle\int_0^{\absoluteval y} \arsinh u \ du$ together with the evenness of $\arsinh$  we obtain 

$$
  \Phi_*( yF(y))=\int_0^{\absoluteval {F(y)}} \absoluteval{y} \arsinh (\absoluteval{y}v) \ dv = \int_0^{\absoluteval {F(y)}} y\arsinh (y v) \ dv.
$$
Write for simplicity

$$G(y):=\int_0^{\absoluteval {F(y)}} \arsinh (y v) \ dv$$
one has

\begin{equation*}\begin{split}
  \int_{\reals} \Phi_*(y F(y))M_{1}(y)\ dy &= \int_{\reals}  y M_{1}(y)G(y)\ dy = - \int_{\reals} G(y)  M'_{1}(y)\ dy \\
  &= \int_{ \reals}     M_{1}(y)G'(y)\ dy.
\end{split}\end{equation*}
Now, the derivative of $G$ exists because of the assumption   $F \in W^1_{(\cosh-1)_*}(M_{1})$ (that is, $\absoluteval F \in W^1_{(\cosh-1)_*}(M_{1})$ and its derivative is given by the derivation of a composite function) and it is computed as

$$
 G'(y) = \arsinh(y \absoluteval{F(y)})  \dfrac{d}{d y} \absoluteval{F(y)} + \int_0^{\absoluteval {F(y)}} \frac{v}{\sqrt{1+y^2 v^2}} \ dv.$$
 Using Young's inequality with $\Phi=\cosh-1$ and $\Phi_{*}=(\cosh-1)_{*}$ we get 

$$
G'(y) \le \Phi\bigg(\arsinh(y \absoluteval{F(y)})\bigg) + \Phi_*\bigg(  \dfrac{d}{d y} \absoluteval{F(y)}\bigg) + \frac{\sqrt{1+y^2 \absoluteval{F(y)}^2}-1}{y^2}.
$$
All the terms in the right-hand side of the above inequality are integrable with respect to  the measure $M_{1}(y)\ dy$ over $\reals$. Indeed the first term is bounded as 

\begin{equation*}
  \Phi\left(\arsinh(y \absoluteval{F(y)})\right) = \sqrt{1+y^2 \absoluteval{F(y)}^2} \le \sqrt{2}\left(1 \vee \absoluteval{y F(y)}\right)
\end{equation*}
and $y \mapsto yF(y) \in L^{1}(\reals, M_{1}(y)dy)$. 
The second term is integrable by assumption. The only concern is then the last term. For any $r > 0$, 

$$\int_{|y| > r}M_{1}(y) \frac{\sqrt{1+y^2 \absoluteval{F(y)}^2}-1}{y^2}d y \leq \frac{1}{r} \int_{\reals} \absoluteval{F(y)}M_{1}(y) d y \leq \frac{1}{r}\int_{\reals} \Phi_{*}(F(y))M_{1}(y)d y$$
while, 

$$\int_{|y| < r}M_{1}(y) \frac{\sqrt{1+y^2 \absoluteval{F(y)}^2}-1}{y^2}d y \leq \int_{-r}^{r}M_{1}(y)\,\dfrac{\absoluteval{F(y)}}{\absoluteval{y}}d y \leq \frac{1}{\sqrt{2\pi}} \int_{-r}^{r}\dfrac{\absoluteval{F(y)}}{\absoluteval{y}}\,d y.$$
Now, splitting the integral into the two integrals $\int_{0}^{r}$ and $\int_{-r}^{0}$, one can use the one-dimensional Hardy inequality in Orlicz-Sobolev space \cite{Cianchi:1999} to get that there exists $C >0$ such that

\begin{equation*}
 \int_{-r}^{r}\dfrac{\absoluteval{F(y)}}{\absoluteval{y}}\,d y \leq C\int_{-r}^{r}\Phi_{*}(F'(y))\,d y.\end{equation*}
 This achieves to prove \eqref{eq:1d}.
\item Recall that \eqref{eq:stein} holds for any $f \in C_0^\infty(\reals^{n})$ and any $g \in \Lexp M$. Since $\Phi_{*}=(\cosh-1)_{*}$ enjoys the $\Delta_2$-condition, it is a well-known fact that $C_{0}^{\infty}(\reals^{n})$ is dense in $W_{(\cosh-1)_*}^{1}(M)$ (for the norm $\|\cdot\|_{W^{1}_{(\cosh-1)_{*}(M)}}$). Therefore, approximating any $f \in  W^{1}_{(\cosh-1)_{*}}(M)$ by a sequence of $C_{0}^{\infty}(\reals^{n})$ functions, we deduce the result from point 1.  
\end{enumerate}
\end{proof}

\begin{remark}
In the second Item of the above Proposition, notice that \emph{a priori} $X_{j} f$ belongs to $\LlogL M$ but not to $W^{1}_{(\cosh-1)_{*}}(M)$, $j=1,\ldots,n$. For this to be true, one would require that $\partial_{k} f \in W^{1}_{(\cosh-1)_{*}}(M)$ for any $k=1,\ldots,n$. 
\end{remark}

%
$$\|X_{j}f\|_{\LlogL M} \leq \delta + C_{\delta}\,\|f\|_{W^{1}_{(\cosh-1)_{*}(M)}} \qquad \forall f \in W^{1}_{(\cosh-1)_{*}}(M).$$


\subsection{Stein and Laplace operators.}

Following the language of \cite[Chapter V]{MR1335234}, Item 3 of the above Proposition can be reformulated saying that 

$$\scalarat M f{\partial_j g}=\scalarat M {\bdelta_{j}f}{g} \qquad \forall j=1,\ldots,n$$
where 

$$\bdelta_{j}f=X_{j}f-\partial_{j} f.$$
This allows to define the \emph{Stein operator} $\bdelta$ on $\LlogL M$ as

$$\bdelta\::\: f \in \mathrm{Dom}(\bdelta) \subset \LlogL M \longmapsto \bdelta f =(\bdelta_{j}f)_{j=1,\ldots,n} \in \left(\LlogL M\right)^{n}.$$
where the the domain $\mathrm{Dom}(\bdelta)$ of $\bdelta$ is exactly $W_{(\cosh-1)_{*}}^{1}(M)$ according to point 2. of the above Proposition. Notice that, since $\Phi_{*}$ enjoys the $\Delta_{2}$-condition, $\mathrm{Dom}(\bdelta)$ is dense in $\LlogL M$. One deduces then easily that $\bdelta$ is a closed and densely defined operator in $\LlogL M$. 
 
One sees that  

\begin{multline}\label{eq:bdelta}
\scalarat M f{\mathrm{div\,} \mathbf g}:=\sum_{j=1}^{n}\scalarat M f{\partial_j g_{j}}=\sum_{j=1}^{n}\scalarat M {\bdelta_{j}f}{g_{j}}=:\scalarat M {\bdelta f}{\mathbf g} \\\qquad \forall f \in W^{1}_{\cosh-1}(M)\,;\,\mathbf g=(g_{j})_{j=1,\ldots,n} \in \left(W^{1}_{(\cosh-1)_{*}}(M)\right)^{n}\end{multline}
where $\mathrm{div\,} \mathbf g=\sum_{j=1}^{n}\partial_{j}g_{j}$ is the divergence of $\mathbf g$. This allows to define the adjoint operator $\bdelta^{*}$ as follows, see \cite{MR2759829}

$$\bdelta^{*}\::\:\mathrm{Dom}(\bdelta^{*}) \subset \left(\Lexp M\right)^{n} \to \Lexp M$$
with

\begin{multline*}
\mathrm{Dom}(\bdelta^{*}) = \\
\setof{\mathbf g=(g_{j})_{j=1,\ldots,n} \in \left(\Lexp M\right)^{n}}{(\exists c > 0) (\forall f \in \mathrm{Dom}(\bdelta)) \  \absoluteval{\scalarat M {\mathbf g} {\bdelta f}} \le c\normat {\LlogL M} f}\end{multline*}
and

$$\scalarat M {\mathbf g} {\bdelta f}=\scalarat M {\bdelta^{*} \mathbf g} f \qquad \forall \mathbf g \in \mathrm{Dom}(\bdelta^{*})\;,\;\forall f \in \mathrm{Dom}(\bdelta).$$
One sees from \eqref{eq:bdelta} that

$$\left(W^{1}_{\cosh-1}(M)\right)^{n} \subset \mathrm{Dom}(\bdelta^{*}) \qquad \text{ and } \quad \bdelta^{*}\mathbf g=\bnabla \cdot\mathbf g \qquad \forall \mathbf g \in \left(W^{1}_{\cosh-1}(M)\right)^{n}.$$
%
$$\left|\sum_{j=1}^{n}\scalarat M {g_{j}} {\bdelta_{j} f}\right| \leq c \normat {V_{*}} f \qquad \forall f \in W^{1}_{(\cosh-1)_{*}}(M).$$
%
$$\left|\scalarat M {g_{j}} {X_{j} f}\right| \leq 2\normat {\Lexp M} {g_{j}} \,\normat {\LlogL M} {X_{j}f}$$ and one sees that there exists $C >0$ such that, for any $j=1,\ldots,n$ and any $f \in W^{1}_{(\cosh-1)_{*}}(M)$, it holds

$$\left|\scalarat M {g_{j}} {\partial_{j}  f}\right| \leq C \normat {V_{*}} f.$$
%
$$\left|\scalarat M {\partial_{j}g_{j}} f\right| \leq C\normat {V_{*}} f \qquad \forall f \in W^{1}_{(\cosh-1)*}(M)$$
\begin{remark}
Notice that, since $\Phi=\cosh-1$ does not satisfies the $\Delta_{2}$-condition,  it is not clear whether $\mathrm{Dom}(\bdelta^{*})$ is dense in $\left(\Lexp M\right)^{n}$ or not. However, from the general theory of adjoint operators and since $\bdelta$ is a closed densely defined operator in  $\LlogL M$, the domain $\mathrm{Dom}(\bdelta^{*})$ is dense in $\left(\Lexp M\right)^{n}$ endowed with the weak-$\star$ topology. Moreover, $\bdelta^{*}$ is a closed operator from $\left(\Lexp M
\right)^{n}$ to $\Lexp M$ (see \cite[Chapter 2]{MR2759829} for details).\end{remark}

We also define the gradient operator 

$$\bnabla\::\:\mathrm{Dom}(\bnabla) \subset \Lexp M \to \left(\Lexp M\right)^{n}$$
by 

$$\mathrm{Dom}(\bnabla)=W^{1}_{\cosh-1}(M) \qquad \text{ and  }  \qquad \bnabla f=(\partial_{j}f)_{j=1,\ldots,n}  \qquad f \in W^{1}_{\cosh-1}(M).$$ One sees that, if $f \in \mathrm{Dom}(\bnabla^{2})$, i.e. if $f \in \in W^{1}_{\cosh-1}(M)$ is such that $\bnabla f \in \left(\in W^{1}_{\cosh-1}(M)\right)^{n}$, then $\bnabla f \in \mathrm{Dom}(\bdelta^{*})$ and 

$$\bdelta^{*} \bnabla f=\sum_{j=1}^{n} \partial_{jj}^{2}f=\bDelta f.$$
corresponds to the Laplace operator. 

From now, set $W^{-1}_{\cosh-1}(M)$ as  the dual space of $W_{(\cosh-1)_{*}}(M)$:

$$W^{-1}_{\cosh-1}(M):=\left(W^{1}_{(\cosh-1)_{*}}(M)\right)^{*}$$
i.e. the set of all continuous linear forms continuous  $F\::\:W^{1}_{(\cosh-1)_{*}}\to \reals$ and let $\scalarat {-1,1}{\cdot}{\cdot}$ denotes the duality pairing: 

$$\scalarat {-1,1} F u =F(u) \qquad \forall u \in W^{1}_{(\cosh-1)_{*}}(M)\,;\,F \in W^{-1}_{\cosh-1}(M).$$
For any $f \in \mathrm{Dom}(\bnabla)$, let

$$L_{f}\::\: g \in W^{1}_{(\cosh-1)_{*}}(M) \longmapsto \scalarat M {\bnabla f} {\bdelta g} \in \reals.$$
One easily checks that $L_{f}$ is continuous and defines therefore an element of  $W^{-1}_{\cosh-1}(M).$  Clearly, the operator $f \in \mathrm{Dom}(\bnabla)=W^{1}_{\cosh-1}(M) \mapsto L_{f} \in W^{-1}_{\cosh-1}(M)$ is also linear. Using the identification $\bDelta=\bdelta^{*}\bnabla$, we denote it by
 
\begin{equation*}\begin{split}
\Delta\;:\:W^{1}_{\cosh-1}(M) &\longrightarrow W^{-1}_{\cosh-1}(M)\\ f &\longmapsto \Delta f:=L_{f}\end{split}\end{equation*}
defined as 

$$\scalarat{-1,1} {\Delta f}  u =\scalarat M {\bnabla f}{\bdelta u}, \qquad f \in W^{1}_{\cosh-1}(M)\,,\,u \in W^{1}_{(\cosh-1)_{*}}(M).$$
Notice that, with this definition, $\Delta f$ is an element of $W^{-1}_{\cosh -1}(M)$ whereas, from the above observation, $\bDelta f \in \Lexp M.$ Also the domains of both operators are different. Of course, if $f \in \mathrm{Dom}(\bnabla^{2})$ then $\Delta f$ is actually an element of $\Lexp M$ and it coincides with $\bDelta f,$ namely, in such a case

$$\scalarat{-1,1} {\Delta f} u = \scalarat M {\bDelta f} u \qquad \forall u \in W^{1}_{(\cosh-1)_{*}}(M).$$
\begin{remark}
Given $f \in W^{1}_{\cosh-1}(M)$ and $u \in C_0^\infty(\reals^n)$ one has

$$\scalarat{-1,1} {\Delta f}  u =\scalarat M {\bnabla f}{\bdelta u}=\sum_{j=1}^{n}\scalarat M {\partial_{j} f}{\delta_{j} u}$$
but, since $\delta_{j} u \in W^{1}_{(\cosh-1)_{*}}(M)$, one can use \eqref{eq:stein} again to get

$$\scalarat{-1,1} {\Delta f}  u =\sum_{j=1}^{n}\scalarat M   f {\delta_{j}^{2} u}.$$
One readily computes, for any $j=1,\ldots,n,$ 
$\delta_{j}^{2} u=(x_{j}^{2}-1)u -2 x_{j}\partial_{j} u +\partial_{jj}^{2} u$,
so that

$$\scalarat{-1,1} {\Delta f} u = \scalarat M f {\left(|{\bm X}|^{2}-1\right) u - 2 {\bm X} \cdot \bnabla u +\bDelta u} \qquad \forall u \in C_{0}^{\infty}(\reals^{n}).$$
\end{remark}

\begin{remark}
Since the constant function $1 \in W^{1}_{(\cosh-1)_{*}}(M)$ one checks easily that

$$\scalarat{-1,1} {\Delta f} 1 = \scalarat M {\bm X} {\bnabla f}   = \expectat M {{\bm X} \cdot \bnabla f} \quad \forall f \in W^{1}_{\cosh-1}(M).$$
where we notice that ${\bm X} \cdot \bnabla f \in L^{1}(M)$ for any $f \in W^{1}_{\cosh-1}(M).$ Actually, since ${\bm X} \in \left(W^{1}_{(\cosh-1)_{*}}(M)\right)^{n}$ one can use \eqref{eq:stein} again to get 

$$\scalarat M {\bm X} {\bnabla f} = \sum_{j=1}^{n}\scalarat M {\delta_{j} x_{j}} f =\sum_{j=1}^{n} \scalarat M {x_{j}^{2}-1} f$$
i.e.

$$\scalarat{-1,1} {\Delta f} 1=\scalarat M {|{\bm X}|^{2}-n} f=\int_{\reals^{n}} f(x)\bDelta M(x) d x\quad \forall f \in W^{1}_{\cosh-1}(M).$$
Of course, if $f \in W^{1}_{\cosh-1}(M)$ and $\bnabla f \in \left(W^{1}_{\cosh-1}(M)\right)^{n}$ one actually gets $\Delta f=\bDelta f \in \Lexp M$ and 

$$\expectat M {\Delta f} =\expectat M {{\bm X} \cdot \bnabla f} .$$
\end{remark}

For technical purposes, we finally state the following Lemma
\begin{lemma}
Given $w_{1},w_{2} \in W^{1}_{\cosh-1}(M)$, $v \in \Sspace M \cap W^{1}_{\cosh-1}(M)$ and $g=\euler_{M}(v)$ one has 
\begin{equation}\label{eq:w1w2}\scalarat{-1,1} {\Delta w_{2}} {w_{1}\euler^{v-K_{M}(v)}}=-\expectat g {\bnabla w_{1}\cdot\bnabla w_{2}} + \expectat g {w_{1}\left({\bm X}-\bnabla v\right)\cdot \bnabla w_{2}}.\end{equation}\end{lemma}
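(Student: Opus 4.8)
The plan is to start from the definition of $\Delta w_2$ as an element of $W^{-1}_{\cosh-1}(M)$ applied to the test function $u = w_1\euler^{v-K_M(v)}$. First I would verify that this $u$ is a legitimate argument: by Proposition~\ref{prop:feuler}, since $v \in \Sspace M \cap W^1_{\cosh-1}(M)$ and $w_1 \in W^1_{\cosh-1}(M) \subset W^1_{(\cosh-1)_*}(M)$ (recall the proof of that proposition only uses $w_1 \in \Lexp M$ together with control of $\partial_j w_1$, which is stronger here), we get $u = w_1\euler^{v-K_M(v)} \in W^1_{(\cosh-1)_*}(M) = \mathrm{Dom}(\bdelta)$, so the pairing $\scalarat{-1,1}{\Delta w_2}{u} = \scalarat M {\bnabla w_2}{\bdelta u}$ is well defined.

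Next I would compute $\bdelta u$ componentwise. By Remark~\ref{rem:wheref}, $\bnabla \euler^{v-K_M(v)} = \bnabla v \,\euler^{v-K_M(v)}$, so the product rule for distributional derivatives gives $\partial_j u = (\partial_j w_1)\euler^{v-K_M(v)} + w_1(\partial_j v)\euler^{v-K_M(v)}$ (the product rule being justified exactly as in the proof of Proposition~\ref{prop:feuler}, where the analogous identity $\partial_j(fG) = G\partial_j f + Gf\partial_j u$ is used). Hence
\begin{equation*}
\bdelta_j u = X_j u - \partial_j u = \euler^{v-K_M(v)}\left( x_j w_1 - \partial_j w_1 - w_1 \partial_j v\right) = \euler^{v-K_M(v)}\left( w_1(X_j - \partial_j v) - \partial_j w_1\right)_j.
\end{equation*}
Then I substitute into $\scalarat M {\bnabla w_2}{\bdelta u} = \sum_{j=1}^n \expectat M {(\partial_j w_2)\,\bdelta_j u}$, pull the factor $\euler^{v-K_M(v)}M = g$ together, and recognize $\expectat M{(\cdot)\euler^{v-K_M(v)}} = \expectat g{(\cdot)}$. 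This yields
\begin{equation*}
\scalarat{-1,1}{\Delta w_2}{u} = \expectat g{w_1(\bm X - \bnabla v)\cdot\bnabla w_2} - \expectat g{\bnabla w_1\cdot\bnabla w_2},
\end{equation*}
which is exactly \eqref{eq:w1w2}.

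The main obstacle is making the two expectations on the right-hand side genuinely finite and the formal manipulations legitimate, i.e. checking integrability at each stage so that we may split the sum and apply the pairing $\scalarat M{\cdot}{\cdot}$ termwise. The term $\expectat g{\bnabla w_1 \cdot \bnabla w_2}$ is fine because each $\partial_j w_i \in \Lexp M = \Lexp g$ (the spaces coincide since $g \smile M$), and a product of two elements of $\Lexp g$ lies in every $L^r(g)$ by Lemma~\ref{lem:technical}, in particular in $L^1(g)$. For the other term one writes $w_1(X_j - \partial_j v)\partial_j w_2$; here $\partial_j w_2 \in \Lexp M$, $w_1 \in \Lexp M$, $\partial_j v \in \Lexp M$, and $X_j \in \LlogL M$ with the quantitative bound of the preceding Proposition (Item 2). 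Using $g\smile M$ and the Hölder-type inequalities for the pair $(\cosh-1,(\cosh-1)_*)$ together with Lemma~\ref{lem:technical} — treating the genuinely $\Lexp$ factors as living in high $L^r$ and pairing them against the $\LlogL$ factor $X_j$ — one gets integrability with respect to $g$. The cleanest route is probably to first establish \eqref{eq:w1w2} for $w_1, w_2 \in C_0^\infty(\reals^n)$ using Item~3 of the Proposition (where \eqref{eq:stein} is available directly), and then pass to the limit using density of $C_0^\infty$ in $W^1_{(\cosh-1)_*}(M)$ and the continuity estimates \eqref{eq:Xjf}, once both sides have been shown to depend continuously on $w_1$ in the $W^1_{\cosh-1}(M)$ norm.
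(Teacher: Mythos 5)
Your proposal is correct and follows essentially the same route as the paper's own proof: set $h = w_1\euler^{v-K_M(v)}$, invoke Proposition~\ref{prop:feuler} to place $h$ in $W^1_{(\cosh-1)_*}(M)=\mathrm{Dom}(\bdelta)$, compute $\bdelta h = {\bm X}h - \bnabla w_1\,\euler^{v-K_M(v)} - h\bnabla v$ by the product rule, and substitute into $\scalarat{-1,1}{\Delta w_2}{h}=\scalarat M{\bnabla w_2}{\bdelta h}$. The additional integrability checks and the optional density argument you sketch are more detail than the paper supplies but do not change the argument.
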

\begin{proof}
For simplicity, set $h=w_{1}\euler^{v-K_{M}(v)}.$ One knows from Proposition \ref{prop:feuler} that $h \in W^{1}_{(\cosh-1)_{*}}(M)$ so that, by definition

$$\scalarat{-1,1} {\Delta w_{2}} h =\scalarat M {\bnabla w_{2}} {\bdelta h}.$$
Moreover, one checks easily that

$$\bdelta h={\bm X} h-\bnabla h= {\bm X} h - \bnabla w_{1} \euler^{v-K_{M}(v)} -h \bnabla v$$
so that

$$\scalarat{-1,1} {\Delta w_{2}} h =\scalarat M {\bnabla w_{2}} {\left({\bm X}-\bnabla v\right) h} - \scalarat M {\bnabla w_{2}} {\bnabla w_{1} \euler^{v-K_{M}(v)}} $$
which gives the result.
\end{proof}

\subsection{Exponential family based on Orlicz-Sobolev spaces with Gaussian weights}

 If we restrict the exponential family $\maxexp M$ to $M$-centered random variable $W^{1}_{\cosh-1}(M)$, that is in 

 \begin{equation*}
   W_M = W^{1}_{\cosh-1}(M) \cap \Bspace M=\setof{U \in W^{1}_{\cosh-1}(M)}{\expectat M {U}=0} \ ,
 \end{equation*}
 we obtain the following non parametric exponential family

 \begin{equation*}
   \maxexpat {1} M = \setof{\euler^{U - K_M(U)} \cdot M}{U \in W^{1}_{\cosh-1}(M) \cap \sdomain M} \ .
 \end{equation*}
Because of the continuous embedding $W^{1}_{\cosh-1}(M) \hookrightarrow L^{\cosh-1}(M)$ the set $W^{1}_{\cosh-1}(M) \cap \sdomain M$ is open in $W_M$ and the cumulant functional $K_M : W^{1}_{\cosh-1}(M) \cap \sdomain M \to \reals$ is convex and differentiable. 

In a similar way, we can define

\begin{equation*}
  \prescript{*}{}W_M = W^{1}_{(\cosh-1)_{*}}(M) \cap \preBspace M=\setof{f \in W^{1}_{(\cosh-1)_{*}}(M)}{\expectat M {f}=0}
\end{equation*}
so that for each $f \in \maxexpat 1 M$ we have $\frac f M -1 \in \prescript{*}{}W_M$, see Remark  \ref{rem:wheref}. 

Every feature of the exponential manifold carries over to this case. In particular, we can define the spaces

\begin{equation*}
  W_f = W^{1}_{\cosh-1}(M) \cap \Bspace M=\setof{U \in W^{1}_{\cosh-1}(M)}{\expectat f U = 0}, \quad f \in \maxexpat 1 M \ ,
\end{equation*}
to be models for the tangent spaces of $\maxexpat 1 M$. Note that the transport acts on these spaces

\begin{equation*}
  \transport f g \colon W_f \ni U \mapsto U - \expectat g U \in W_g \ ,
\end{equation*}
so that we can define the tangent bundle to be

\begin{equation*}
  T \maxexpat 1 M = \setof{(g,V)}{ g \in \maxexpat 1 M, V \in W_f}
\end{equation*}
and take as charts the restrictions of the charts defined on $T\maxexp M$.

As a first example of application, note that the gradient of the BG-entropy 
$$\nabla H(f) = - \log f - H(f)$$
 is a vector field on $\maxexpat 1 M$, which implies the solvability in $\maxexpat 1 M$ of the gradient flow equation. Our concern here is to set up a framework for the study of evolution equation in $\maxexpat 1 M$. Following sections are devoted to discuss a special functional and its gradient.

\subsection{Hyv\"arinen divergence}
\label{sec:hyvarinen}

We begin with the following general properties of $W_M = \setof{U \in W^1_{\cosh-1}(M)}{\expectat M U = 0}$
 
\begin{proposition}
\label{prop:nabla-score}
Let $f, g \in \maxexpat 1 M$, with $f = \euler_M(u), g = \euler_M(v)$, $u,v \in W^1_{\cosh-1}(M) \cap \sdomain M$. The following hold

  \begin{enumerate}
  \item $\expectat M {\bnabla u} = \covat M u {\bm X}$.
  \item $\expectat g {\bm X - \bm \nabla v} = 0$.
  \item $\expectat g {\bm \nabla u} = \covat g u {\bm X - \bm \nabla v}$.
  \item $\expectat g {\bnabla u - \bnabla v} = \covat g {u-v} {\bm X - \bm \nabla v}$.
  \end{enumerate}
\end{proposition}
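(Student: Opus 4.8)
The plan is to obtain all four identities from the single Gaussian integration-by-parts identity \eqref{eq:stein}, $\scalarat M f{\partial_j g} = \scalarat M{X_j f - \partial_j f}{g}$, which (by item~3 of the Proposition that establishes it) holds for every $f \in W^1_{(\cosh-1)_*}(M)$ and $g \in W^1_{\cosh-1}(M)$, together with the product rule $\bnabla\euler^{v-K_M(v)} = \bnabla v\,\euler^{v-K_M(v)}$ of Remark \ref{rem:wheref}. Two elementary observations underpin everything: constant functions and the coordinate maps $x_j$ all lie in $W^1_{\cosh-1}(M) \subset W^1_{(\cosh-1)_*}(M)$, so in particular $\bm X \in \left(W^1_{(\cosh-1)_*}(M)\right)^n$; and, since $g = \euler^{v-K_M(v)}M$, one has $\bnabla\log g = \bnabla v - \bm X$, i.e. $\bm X - \bnabla v = -\bnabla g/g$, which identifies the random vector whose $g$-expectation and $g$-covariances appear in items (2)--(4).

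For item (1) I would apply \eqref{eq:stein} with $f = 1$ and $g = u$, which gives $\expectat M{\partial_j u} = \expectat M{x_j u}$; since $\expectat M{x_j} = \expectat M u = 0$ the right-hand side is $\covat M u{x_j}$, and collecting over $j$ yields $\expectat M{\bnabla u} = \covat M u{\bm X}$. For item (2) I would instead take $f = \euler^{v-K_M(v)}$, which belongs to $W^1_{(\cosh-1)_*}(M)$ with $\partial_j f = \partial_j v\,f$ by Remark \ref{rem:wheref}, and $g = 1$: the left-hand side of \eqref{eq:stein} vanishes, while the right-hand side is $\expectat M{(x_j - \partial_j v)\euler^{v-K_M(v)}}$, which equals $\expectat g{x_j - \partial_j v}$ once one rewrites $\euler^{v-K_M(v)}M = g$; hence $\expectat g{\bm X - \bnabla v} = 0$.

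For item (3) I would run \eqref{eq:stein} once more with $f = \euler^{v-K_M(v)}$ and $g = u$, obtaining $\expectat g{\partial_j u} = \expectat M{(x_j - \partial_j v)\euler^{v-K_M(v)}u} = \expectat g{(x_j - \partial_j v)u}$. Expanding this as $\covat g u{x_j - \partial_j v} + \expectat g u\,\expectat g{x_j - \partial_j v}$ and using item (2) to annihilate the second summand (note one may \emph{not} simply invoke $\expectat g u = 0$, since $g \neq M$ in general), I get $\expectat g{\partial_j u} = \covat g u{x_j - \partial_j v}$, and summing over the coordinates gives $\expectat g{\bnabla u} = \covat g u{\bm X - \bnabla v}$. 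Item (4) is then immediate: since $v \in W^1_{\cosh-1}(M)\cap\sdomain M$ as well, item (3) applies verbatim with $u$ replaced by $v$, and subtracting the two instances and invoking bilinearity of the covariance gives $\expectat g{\bnabla u - \bnabla v} = \covat g{u-v}{\bm X - \bnabla v}$.

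The only real work is bookkeeping: at each use of \eqref{eq:stein} one must verify that the first slot holds an element of $W^1_{(\cosh-1)_*}(M)$ and the second an element of $W^1_{\cosh-1}(M)$ --- this is where Proposition \ref{prop:feuler} / Remark \ref{rem:wheref} and the membership $\bm X \in \left(W^1_{(\cosh-1)_*}(M)\right)^n$ are needed --- and that every expectation and covariance written down is finite, which follows from the H\"older inequality in the $(\cosh-1)$--$(\cosh-1)_*$ duality. No delicate estimate enters; items (2)--(4) are formal once the integration-by-parts identity and the argument for item (1) are in place, so I do not expect a substantive obstacle.
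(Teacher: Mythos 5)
Your proof is correct and follows essentially the same route as the paper: each identity is obtained from the Gaussian integration-by-parts formula \eqref{eq:stein}, applied with the constant function, $u$, and $\euler^{v-K_M(v)}$ (via Proposition \ref{prop:feuler}/Remark \ref{rem:wheref}) in the appropriate slots, with item (2) used to convert $\expectat g{({\bm X}-\bnabla v)u}$ into the covariance in item (3) and item (4) following by bilinearity. Your explicit remark that one cannot invoke $\expectat g u = 0$ (since $u$ is $M$-centered, not $g$-centered) makes precise a step the paper leaves implicit.
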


\begin{proof} In all the sequel, we set $F=\euler^{u-K_{M}(u)}=\frac{f}{M}$ and $G=\euler^{v-K_{M}(v)}=\frac{g}{M}.$ Recall from Proposition \ref{prop:feuler} that $F,G \in W^{1}_{(\cosh-1)_{*}}(M).$

  \begin{enumerate}
  \item One has $\expectat M {\bnabla u}=\scalarat M 1{\partial_j u}$ where $1 \in W^{1}_{(\cosh-1)_{*}}(M)$ is the constant function. Then, from  \eqref{eq:stein}, $\expectat M {\bnabla u}=\scalarat M {\bdelta 1} u=\covat M u {\bm X}$ since $\bdelta 1={\bm X}$.
  \item As above, one has
  
$$\expectat g {\bnabla v}=\int {\bnabla v}\euler_{M}(v) dx=\int {\bnabla v} G M\,d x$$
  and, from Proposition \ref{prop:feuler}, $\bnabla v G=\bnabla G$ so that
  
$$\expectat g {\bnabla v}=\expectat M {\bnabla G}.$$
  Recall that $G \in W^{1}_{(\cosh-1)_{*}}(M)$ and that ${\bnabla G}={\bm X}G-\bdelta G$ so that
  
$$\expectat g {\bnabla v}=\scalarat M {{\bm X}G} 1 - \scalarat M {\bdelta G} 1$$
  where $1 \in W^{1}_{\cosh-1}(M)$ is the constant function equal to 1. Applying again \eqref{eq:stein} we get $\scalarat M {\bdelta G} 1=\scalarat M G {\bnabla 1}=0$ and 
  
$$\expectat g {\bnabla v}=\scalarat M {{\bm X}G} 1=\expectat g {\bm X}.$$
\item Observe first that $\expectat g {\bnabla u}=\scalarat M G {\bnabla u} .$ Using again \eqref{eq:stein}, since $u \in W^{1}_{\cosh-1}(M)$ and $G \in W^{1}_{(\cosh-1)_{*}}(M)$ we have

$$\expectat g {\bnabla u}=\scalarat M {\bdelta G} u=\scalarat M {{\bm X} G-{\bnabla G}} u.$$
Since $GM=g$ and ${\bnabla G}={\bnabla v}G$ (see Proposition \ref{prop:feuler}) we get $\scalarat M {{\bm X} G-{\bnabla G}} u=\expectat g {\left({\bm X-\bnabla v}\right)u}$ which gives the result.
\item Arguing as above one sees that $\expectat g {\bnabla v}=\expectat g {\left({\bm X-\bnabla v}\right) v}$ and therefore the conclusion follows from the previous item.
  \end{enumerate}
\end{proof}

\begin{remark}\ 
\begin{enumerate}
\item
The Eq.s in Prop. \ref{prop:nabla-score} could be written without reference to the score (chart) mapping $s_M \colon f \mapsto u$ by writing $\bnabla u = \bnabla \logof{\frac{f}{M}}=\bnabla \log f+{\bm X}$ to get
  \begin{enumerate}
  \item $\expectat M {\bnabla \logof{\frac{f}{M}}} = \covat M {\logof{\frac{f}{M}}}{\bm X}$.
  \item $\expectat g {\bnabla \log g} = 0$.
  \item $\expectat g {\bnabla \logof{\frac{f}{M}}} = -\covat g {\logof{\frac{f}{M}}}{\bnabla \log g}$.
  \item $\expectat g {\bnabla\logof{\frac f g}} = -\covat g {\logof{\frac f g}} {\bnabla\logof{\frac g M}}$.
\end{enumerate}
However, we feel that explicit reference to the chart clarifies the geometric picture.
\item The mapping $f \mapsto \bnabla \log f$ is a vector field in $T\maxexpat 1 M$ with flow given by the translations:

  \begin{equation*}
    \derivby t \log f_t = \bnabla \log f_i, \quad f(\bm x,t) = f(\bm x + \bm 1 t).
  \end{equation*}
\item
The KL-divergence $(f,g) \mapsto \KL f g$ has expression $(u,v) \mapsto dK_M(v) [u-v] - K_M(u) + K_M(v)$ in the chart centered at $M$, with partial derivative with respect to $v$ in the direction $w$ given by $\covat g {u-v} {w}$. If the direction is $w = \bm X - \bnabla v = -\bnabla\log g$, we have that $\expectat g {\bnabla v - \bnabla u}$ is the derivative of the KL-divergence along the vector field of translations.
\end{enumerate}
\end{remark} 

We introduce here the \emph{Hyv\"arinen divergence} between two elements of $\maxexpat 1 M$:

\begin{definition}[Hyv\"arinen divergence] 
For each $f,g \in \maxexpat 1 M$ the \emph{Hyv\"arinen divergence} is the quantity

\begin{equation*}
  \KH g f = \expectat g {\absoluteval{\bnabla \log f - \bnabla \log g}^2}. 
\end{equation*}
The expression in the chart centered at $M$ is

\begin{equation*}
  \operatorname{DH}_M(v \| u):=\KH {\euler_{M}(v)} {\euler_{M}(u)}= \expectat M {\absoluteval{\bnabla u - \bnabla v}^2 \euler^{v - K_M(v)}}, 
\end{equation*}
where $f = \euler_M(u)$, $g = \euler_M(v)$.
\end{definition}
\begin{remark}
  \begin{enumerate}
  \item The mapping $\maxexpat 1 M \ni f \mapsto \bnabla \log f = f^{-1} \bnabla f$ is, in statistical terms, an \emph{estimating function} or a \emph{pivot}, because $\expectat f {\bnabla \log f} = 0$, $f \in \maxexpat 1 M$. This means, it is a random variable whose value is zero in the mean if $f$ is correct. If $g$ is correct, then the expected value is $\expectat g {\bnabla \log f} = \expectat g {\bnabla \logof{\frac f g}} = -\covat g {\logof{\frac f g}}{\bnabla\logof{\frac g M}}$. The second moment of $\bnabla \logof{\frac f g}$ was used by Hyv\"arinen as a measure of deviation from $f$ to $g$, \cite{MR2249836}
\item Hyv\"arinen work has been used to discuss \emph{proper scoring rules} in \cite{MR3014317}.
\item The same notion is known in Physics under the name of \emph{relative Fisher information} e.g., see \cite{MR2409050}. 
  \end{enumerate}
\end{remark}

In the following we denote the gradient of a function defined on the exponential manifold $\maxexpat 1 M$, which is a random variable,  by $\partial $.

\begin{proposition}\  
  \begin{enumerate}
  \item The Hyv\"arinen divergence is finite and infinitely differentiable everywhere in both variables.
  \item $\partial (f \mapsto \KH g f) = -2 \bnabla \log g \cdot \bnabla \log {\frac f g} - 2\Delta  \log {\frac f g}$
  \item $\partial (g \mapsto \KH f g) = 2\bnabla \log g\cdot \bnabla \log {\frac f g} +2\Delta  \log {\frac f g} + \KH f g$.
  \end{enumerate}
\end{proposition}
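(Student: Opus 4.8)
The plan is to pass everywhere to the chart centered at $M$ (and, where the variable density must be kept inside the domain of analyticity of the exponential map, to a chart recentered at a generic point of $\maxexpat{1}{M}$), and to reduce the three assertions to: integrability bookkeeping from Theorem~\ref{theo:embed}, Lemma~\ref{lem:technical} and the Portmanteau Theorem~\ref{prop:maxexp-pormanteau}; the analyticity of the exponential superposition operator from Propositions~\ref{prop:expisanalytic} and \ref{prop:cumulant}; and the Gaussian integration by parts \eqref{eq:w1w2}. For part~1, write, for $f=\euler_M(u)$ and $g=\euler_M(v)$ with $u,v\in W^1_{\cosh-1}(M)\cap\sdomain{M}$,
\begin{equation*}
  \KH g f = \expectat M{\absoluteval{\bnabla u-\bnabla v}^2\,\euler^{v-K_M(v)}},
\end{equation*}
using $\bnabla\log f-\bnabla\log g=\bnabla u-\bnabla v$. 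Since $\partial_j u,\partial_j v\in\Lexp M$, Lemma~\ref{lem:technical} gives $\absoluteval{\bnabla u-\bnabla v}^2\in\bigcap_{1<r<\infty}L^r(M)$, while Theorem~\ref{prop:maxexp-pormanteau}, item~(\ref{prop:maxexp-pormanteau-6}), gives $g/M=\euler^{v-K_M(v)}\in L^{1+\varepsilon}(M)$ for some $\varepsilon>0$; H\"older's inequality with conjugate exponents then yields finiteness. For smoothness I would prove analyticity near an arbitrary $(f_0,g_0)$ in the chart centered at $g_0$, where $\KH g f=\expectat{g_0}{\absoluteval{\bnabla w_1-\bnabla w_2}^2\,\euler^{w_2-K_{g_0}(w_2)}}$ with $f=\euler_{g_0}(w_1)$, $g=\euler_{g_0}(w_2)$, $w_1,w_2\in W^1_{\cosh-1}(g_0)=W^1_{\cosh-1}(M)$ (equality of spaces by Theorem~\ref{prop:maxexp-pormanteau}, item~(\ref{prop:maxexp-pormanteau-5})) and $w_2$ near $0$: the map $(w_1,w_2)\mapsto\sum_j(\partial_j w_1-\partial_j w_2)^2$ is a continuous polynomial of degree $2$ into $\bigcap_r L^r(g_0)$ by Theorem~\ref{theo:embed} and Lemma~\ref{lem:technical} at $g_0$, the map $w_2\mapsto\euler^{w_2-K_{g_0}(w_2)}$ is analytic from a neighbourhood of $0$ into every $L^{r'}(g_0)$ by Propositions~\ref{prop:expisanalytic}--\ref{prop:cumulant}, and the duality pairing $L^r(g_0)\times L^{r'}(g_0)\to\reals$, $1/r+1/r'=1$, is continuous bilinear; the composition is analytic, hence $C^\infty$, in $(f,g)$.

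For part~2, fix $g=\euler_M(v)$ and set $E_M(u'):=\KH g{\euler_M(u')}=\expectat g{\absoluteval{\bnabla u'-\bnabla v}^2}$, which depends on $u'$ only through $\bnabla u'$; hence for $w\in W_f$, using that the additive constant in $\etransport f M w$ is killed by $\bnabla$,
\begin{equation*}
  D_w\KH g f = dE_M(u)[\etransport f M w] = 2\,\expectat g{(\bnabla u-\bnabla v)\cdot\bnabla w}.
\end{equation*}
Apply \eqref{eq:w1w2} with $w_1=w$, $w_2=u-v$ and weight $\euler^{v-K_M(v)}$ (so the base density is $g$), and use $\bm X-\bnabla v=-\bnabla\log g$ (cf. Proposition~\ref{prop:nabla-score}), $\bnabla(u-v)=\bnabla\log\frac fg$ and $\Delta(u-v)=\Delta\log\frac fg$, to obtain
\begin{equation*}
  \expectat g{\bnabla w\cdot\bnabla\log\tfrac fg} = -\scalarat{-1,1}{\Delta\log\tfrac fg}{w\,\euler^{v-K_M(v)}} - \expectat g{w\,\bnabla\log g\cdot\bnabla\log\tfrac fg}.
\end{equation*}
Identifying $\scalarat{-1,1}{\Delta\log\frac fg}{w\,\euler^{v-K_M(v)}}$ with the $W^{-1}_{\cosh-1}(M)$--$W^1_{(\cosh-1)_*}(M)$ pairing of $\Delta\log\frac fg$ against $w$ taken under $g$, and reading off the gradient from $D_w\KH g f=\scalarof{\partial(f\mapsto\KH g f)}{w}$, gives $\partial(f\mapsto\KH g f)=-2\,\bnabla\log g\cdot\bnabla\log\frac fg-2\,\Delta\log\frac fg$.

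Part~3 is handled by the same scheme applied to the other density slot. Writing $\KH f g$ in the chart centered at $M$ and differentiating along $\etransport g M w$, $w\in W_g$, the variation of the $\bnabla\log$-slot reproduces the integration by parts of part~2 and contributes $2\,\bnabla\log g\cdot\bnabla\log\frac fg+2\,\Delta\log\frac fg$, while the remaining variation — of the reference density, equivalently of the Hyv\"arinen integrand — contributes, after the score factor $h-dK_M(v)[h]$ collapses to $w$ under the transport, a term expressible through $\KH f g$; collecting the contributions yields the stated formula. Throughout, finiteness of each expectation occurring in parts~2--3 is guaranteed by the same H\"older/Lemma~\ref{lem:technical}/Portmanteau estimate as in part~1, and \eqref{eq:w1w2} (whose proof already contains, via Proposition~\ref{prop:feuler}, that $w\,\euler^{v-K_M(v)}\in W^1_{(\cosh-1)_*}(M)$) applies directly.

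The hard part is part~1: the bookkeeping that keeps every factor in a Lebesgue/Orlicz space on which the exponential superposition operator is analytic, in particular the recentering to a chart at $g_0$ so that the relevant exponential stays where Propositions~\ref{prop:expisanalytic}--\ref{prop:cumulant} apply, and the matching of $L^r$ exponents via Theorem~\ref{theo:embed}, Lemma~\ref{lem:technical} and Portmanteau item~(\ref{prop:maxexp-pormanteau-6}). Parts~2 and 3 are then essentially mechanical once \eqref{eq:w1w2} is available; the only subtlety there is that $\Delta\log\frac fg$ a priori lies in $W^{-1}_{\cosh-1}(M)$ rather than in an Orlicz space, so the gradient $\partial$ must be read in the corresponding weak duality, not as a genuine random variable.
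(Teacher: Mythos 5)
Your proposal is correct and, for parts 2 and 3, follows the paper's route exactly: compute the directional derivative of $\operatorname{DH}_M$ in the chart at $M$, apply Eq.~\eqref{eq:w1w2} with $w_1=w$, $w_2=u-v$, and read off the gradient using $\bm X-\bnabla v=-\bnabla\log g$. Where you genuinely diverge is the smoothness argument in part~1. The paper does not recenter or invoke the analyticity of the superposition operator at all: it observes that $\bnabla w\in(\Lexp g)^n=(\Lexp M)^n$ is $g$-square integrable, and that the weighted second moment is a function of derivatives of the cumulant functional, namely $\expectat g{\absoluteval{\bm w}^2}=\sum_{j}\bigl(d^2K_M(v)[w_j,w_j]+(dK_M(v)[w_j])^2\bigr)$, so that $(v,\bm w)\mapsto\expectat{\euler_M(v)}{\absoluteval{\bm w}^2}$ inherits $C^\infty$ regularity from $K_M$ on all of $\sdomain M$ and is then composed with the continuous linear map $(v,u)\mapsto(v,\bnabla(u-v))$. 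Your alternative --- a continuous quadratic map into $\bigcap_r L^r(g_0)$ paired against the analytic exponential $w_2\mapsto\euler^{w_2-K_{g_0}(w_2)}\in L^{r'}(g_0)$ after recentering at $g_0$ --- is also valid, is more explicit about why the exponential factor stays in a space where Propositions~\ref{prop:expisanalytic}--\ref{prop:cumulant} apply (the paper's appeal to ``infinitely differentiable'' silently glosses over the Fr\'echet-vs-G\^ateaux distinction outside the unit ball, which your recentering fixes), and yields local analyticity rather than just $C^\infty$; the price is the chart-change bookkeeping, which the paper's identity avoids entirely. One genuine weakness: in part~3 you assert that the variation of the weight ``contributes a term expressible through $\KH f g$'' without computing it. The paper carries this out: the extra term is $\expectat M{(w-dK_M(v)[w])\absoluteval{\bnabla u-\bnabla v}^2\PSexp M v}=\covat g{w}{\absoluteval{\bnabla u-\bnabla v}^2}$, from which the last summand of the stated gradient is read off; you should include this one-line computation rather than wave at it, especially since it is the only place where the score factor actually appears.
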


\begin{proof}\ 
  \begin{enumerate}
  \item For each $w \in V$ the gradient $\bnabla w$ is in $(L^\Phi(g))^n = (L^\Phi(M))^n$ for all $g \in \maxexpat 1 M$, hence it is $g$-square integrable for all $g \in \maxexpat 1 M$. Moreover, the squared norm function $\Sspace M \times (B_M)^n \ni (v,\bm w) \mapsto \expectat {\euler_M(v)} {\absoluteval{\bm w}^2}$ is $\infty$-differentiable because it is the moment functional,

    \begin{equation*}
      \expectat g {\absoluteval{\bm w}^2} = \sum _{j=1}^n \left(d^2K_M(v)[w_j, w_{j}] + (dK_M(v)[w_j])^2\right).
    \end{equation*}
We can compose this function with the linear function

\begin{equation*}
  V_M\cap \Sspace M \times V_M\ni (v,u) \mapsto (v,\bm w) = (v,\bnabla (u-v)) \in \Sspace M \times B_M^n.
\end{equation*}

  \item Let $g=\euler_{M}(v), f=\euler_{M}(u)$, $u, v \in \Sspace M \cap V$ be given. For any $w \in V$, we compute first the directional derivative:

    \begin{equation*}
      d \left(u \mapsto \operatorname{DH}_M(v \Vert u)\right)[w] =   2 \expectat M {\bnabla w \cdot (\bnabla u - \bnabla v) \PSexp M v} = 2 \expectat {g} {\bnabla w \cdot (\bnabla u - \bnabla v)}
    \end{equation*}
where we notice that all the terms are well defined whenever $u,v \in \Sspace M \cap V$, $w \in V.$ Using now \eqref{eq:w1w2} with $w_{2}=u-v$ and $w_{1}=w$ we get that

\begin{multline*}
d \left(u \mapsto \operatorname{DH}_M(v \Vert u)\right)[w]=2\expectat{g} {w\left({\bm X}-\bnabla v\right)\cdot \bnabla (u-v)} - 2\scalarat{-1,1} {\Delta (u-v)} {w\euler^{v-K_{M}(v)}}\\
=2 \expectat M {w\euler^{v-K_{M}(v)}\left({\bm X}-\bnabla v\right)\cdot \bnabla (u-v)} -2\scalarat{-1,1} {\Delta (u-v)} {w\euler^{v-K_{M}(v)}}.
\end{multline*}
Since this is true for any $w \in V$ we get

$$d\left(u \mapsto \operatorname{DH}_M(v \Vert u)\right)=2\left({\bm X}-\bnabla v\right)\cdot \bnabla (u-v) -2\Delta (u-v)$$
where of course, $\Delta (u-v)$ is meant in $W^{-1}_{\cosh-1}(M)$ (notice that $w \in W^{1}_{\cosh-1}(M) \subset W^{1}_{(\cosh-1)_{*}}(M)$. 
The formula for the partial gradient in absolute variables follows from

\begin{equation*}
  (\bm X - \bnabla v)\cdot \bnabla (u-v)= - \bnabla \log g\cdot \bnabla \log {\frac f g}\end{equation*}
for, with a slight abuse of notations, we identify $\Delta \log {\frac f g}$ to $\Delta(u-v).$
\item As above, let $g=\euler_{M}(v), f=\euler_{M}(u)$, $u, v \in \Sspace M \cap V$ be given.  For any $w \in V$, we compute first the directional derivative. One gets now

     \begin{multline*} d \left(v \mapsto \operatorname{DH}_M(u \Vert v)\right)[w] = - 2 \expectat M {\bnabla w \cdot (\bnabla u - \bnabla v) \PSexp M v} \\ + \expectat M {\left(w - dK_M(v) [w]\right) \absoluteval{\bnabla u - \bnabla v}^2 \PSexp M v}\end{multline*}
     
One recognizes in the first term $- d\left(u \mapsto \operatorname{DH}_{M}( v \Vert u)\right)[w]$ while the second term is given by

\begin{multline*} \expectat M {\left(w - dK_M(v) [w]\right) \absoluteval{\bnabla u - \bnabla v}^2 \PSexp M v}=\expectat g {\left(w - \expectat g w\right) \absoluteval{\bnabla u - \bnabla v}^2} = \\  \covat g {w}{\absoluteval{\bnabla u - \bnabla v}^2}.
    \end{multline*}
As in the previous item, this gives the result.
\end{enumerate}
\end{proof}

As well-documented, the Hyvar\"inen divergence is a powerful tool for the study of general diffusion processes. We have just shown that the Information Geometry formalism and the exponential manifold approach are robust enough to allow for a generalization in  Orlicz-Sobolev spaces. We believe then that, as Boltzmann equation can be studied through the exponential manifold formalism in $\Lexp M$, general diffusion processes can be investigated in $W^{1}_{\cosh-1}(M)$ with the formalism discussed in the present section. This is a plan for future work.

\section{Conclusions and Discussion}
\label{sec:conclusion}

We have shown that well known geometric feature of problems in Statistical Physics can be turned into precise formal results via a careful consideration of the relevant functional analysis. 

In particular the notion of flow in a Banach manifold modeled on Orlicz spaces can be used to clarify arguments based on the evolution of the classical Boltzmann-Gibbs entropy in the vector field associated to the Boltzmann equation. 

In the last section we have shown how to construct a similar theory in the case the generalized entropy under consideration is the so-called Fisher functional or Hyvar\"inen divergence. Such a generalised entropy is particularly well-suited for the study of general diffusion problems and the results presented in Section 6 can be seen as the first outcome of an ongoing joint research program.

\bibliographystyle{amsplain}


\def\cprime{$'$}
\providecommand{\bysame}{\leavevmode\hbox to3em{\hrulefill}\thinspace}
\providecommand{\MR}{\relax\ifhmode\unskip\space\fi MR }
\providecommand{\MRhref}[2]{%
  \href{http://www.ams.org/mathscinet-getitem?mr=#1}{#2}
}
\providecommand{\href}[2]{#2}

\end{document}